\documentclass[12pt]{amsart}
\usepackage[utf8]{inputenc}

\usepackage[margin=1.3in]{geometry}

\usepackage{amsmath}
\usepackage{amsfonts}
\usepackage{amssymb}
\usepackage{amsthm}
\usepackage{mathtools}
\usepackage{caption}
\usepackage{subcaption}
\usepackage{bbm}
\usepackage[export]{adjustbox}

\usepackage{stmaryrd}

\usepackage[all]{xy}

\usepackage{tikz-cd}
\usetikzlibrary{matrix}
\usepackage{graphicx} 
\usepackage{float}

\usepackage{epstopdf}

\usepackage{overpic}

\usepackage[linktocpage]{hyperref}
\hypersetup{
    colorlinks=true,
    linkcolor=blue,
    citecolor=blue,      
    urlcolor=blue,
}

\usepackage{color}
\definecolor{note}{rgb}{0,0,1}  

\newtheorem{theorem}{Theorem}

\newtheorem{proposition}[theorem]{Proposition}
\newtheorem*{proposition*}{Proposition}
\newtheorem{lemma}[theorem]{Lemma}

\newtheorem{corollary}[theorem]{Corollary}
\newtheorem{example}[theorem]{Example}

\numberwithin{equation}{section}
\numberwithin{theorem}{section}

\usepackage{enumitem}

\usepackage{todonotes}

\newcommand{\op}{\operatorname}

\newcommand{\R}{\mathbb{R}}

\newcommand{\Z}{\mathbb{Z}}

\newcommand{\be}{\begin{enumerate}}
\newcommand{\ee}{\end{enumerate}}

\usepackage[english]{babel}
\usepackage{csquotes}

\usepackage{hyphenat}

\usepackage[backend=biber,style=alphabetic,maxalphanames=4,maxnames=4]{biblatex}

\renewbibmacro{in:}{}

\DeclareDelimFormat[bib,biblist]{nametitledelim}{\addcomma\space}

\DeclareFieldFormat*{title}{\mkbibitalic{#1}\addcomma}
\DeclareFieldFormat*{journaltitle}{#1}
\DeclareFieldFormat*{volume}{\mkbibbold{#1}}
\DeclareFieldFormat{pages}{#1}
\DeclareFieldFormat[misc]{date}{preprint {#1}}
\DeclareFieldFormat{mr}{%
  MR\addcolon\space
  \ifhyperref
    {\href{http://www.ams.org/mathscinet-getitem?mr=MR#1}{\nolinkurl{#1}}}
    {\nolinkurl{#1}}}
    
\AtEveryBibitem{
  \clearfield{url}
  \clearfield{issn}
  \clearfield{isbn}
  \clearfield{eprintclass}
}
\AtEveryBibitem{\ifentrytype{book}{\clearfield{pages}}{}}

\usepackage{fancyhdr}
\newcommand{\one}{\mathbf{1}}

\DeclareMathOperator{\vol}{vol}
\DeclareMathOperator{\MV}{MV}

\DeclareMathOperator{\Ext}{Ext}
\DeclareMathOperator{\ext}{ext}
\DeclareMathOperator{\ME}{ME}

\newcommand{\red}{\mathrm{red}}
\newcommand{\eqdot}{\mathrel{\doteq}}
\allowdisplaybreaks[1]
\title{Log-concavity of flat arrangement polynomials}

\author{Yuan Gao}
\address{School of Mathematics, Nanjing University, Nanjing, Jiangsu, 210093, China}
\email{yuangao@nju.edu.cn} \urladdr{}

\author{Tianyu Yuan}
\address{School of Mathematical Sciences, Eastern Institute of Technology, Ningbo, Zhejiang, 315200, China}
\email{tyyuan@eitech.edu.cn} \urladdr{}

\date{\today}

\subjclass[2020]{Primary 57K14; Secondary 52A39, 05B35, 05C31, 26D15}
\keywords{Flat arrangements, mixed volumes, log-concavity, external semi-activity, Eulerian digraphs, Alexander polynomials}

\begin{document}

\begin{abstract}
    We prove log-concavity for the determinant-weighted external semi-activity polynomials of all real flat arrangements, strengthening their known trapezoidality. In fact, we establish a quadratic coefficient inequality that, in rank at least two, implies power concavity with an explicit rank-dependent exponent. The proof uses a new mixed-volume representation of the coefficients and the Alexandrov--Fenchel inequality.
    A more general formula gives a factorization and log-concavity for related mixed-volume sequences.
    As applications, we establish the conjectured log-concavity for spanning-tree polynomials of Eulerian digraphs, extend it to positive circulation weights, and strengthen the coefficient inequalities for Alexander polynomials of special alternating links.
\end{abstract}

\maketitle

\tableofcontents

\section{Introduction}\label{section-introduction}

Coefficient inequalities connect enumerative combinatorics with the geometry of convex bodies. In the setting of Alexander polynomials, they also connect graph enumeration with knot theory.
Fox's trapezoidality conjecture for alternating knots and its usual extension to nonsplit alternating links predicts that the absolute coefficients of the Alexander polynomial increase strictly to a possibly nontrivial plateau and then decrease strictly \cite{Fox62}\cite[Conjecture 1.1]{HMV25}.
Hafner, M\'esz\'aros, and Vidinas proved the stronger conclusion of log-concavity for special alternating links \cite[Theorem 1.2]{HMV24}.
Murasugi and Stoimenow's spanning-tree polynomial extends the special-alternating Alexander polynomial model to connected Eulerian digraphs \cite{MS03}.
Hafner, M\'esz\'aros, and Vidinas conjectured log-concavity, together with the absence of internal zeros, for this entire graph family \cite[Conjecture 1.6]{HMV25}.

Flat-arrangement polynomials provide a common setting for these questions. They enumerate bases of a real vector arrangement by external semi-activity, with each basis weighted by its determinant.
The definition and its invariance under a generic auxiliary choice originate in work of Li and Postnikov, as recorded by K\'alm\'an, M\'esz\'aros, and Postnikov \cite[Definition 3.3 and Theorem 3.4]{KMP25}. The latter authors developed the cographic realization of the Eulerian-digraph polynomial and gave an alternative proof of trapezoidality in the special-alternating setting.
In our earlier paper \cite[Theorem 1.2 and Corollary 6.4]{GY26}, we proved trapezoidality for every flat arrangement and every connected Eulerian digraph.
In this paper, we pursue the answer to the natural question whether all flat-arrangement polynomials satisfy the stronger form of trapezoidality, namely log-concavity.

Our proof for trapezoidality expresses the polynomial of a flat arrangement as a positive integral of the ordinary $h$-polynomials of generic fibers of a simplex projection \cite[Proposition 5.2]{GY26}.
We reply on the Dehn--Sommerville symmetry and the $g$-theorem to get the trapezoidal coefficient shape of those fibers and take the integral representation of them. However, this approach does not establish log-concavity, as we observed in \cite[Remark 6.5]{GY26}.
This led us to seek a coefficient formula where well-established log-concave inequalities can directly apply. 

We observe that the Alexandrov--Fenchel inequality is one of the central inequalities in convex geometry that provide log-concavity. To apply it, we need a mix volume expression for the flat arrangement polynomial coefficients.
The new input in the current paper is that we consider the images of all hypersimplices under the same projection. Their consecutive mixed volumes recover the coefficients individually, so Alexandrov--Fenchel can be applied.
The resulting inequality obtained via this approach turns out to be stronger than ordinary log-concavity.

\subsection{The main coefficient inequality}

Let $V$ be a real vector space of dimension $d\geq1$, equipped with a nonzero alternating volume form $\omega_V \in\bigwedge^dV^*$.
A \emph{flat arrangement} in $V$ is a labeled spanning family of vectors, represented by a surjective map
\begin{equation}
\label{eq-arrangement}
    A=[a_1\,|\cdots |\, a_N]\colon\R^N\longrightarrow V,
\end{equation}
such that there is a linear functional $\varphi\in V^*$ satisfying $\varphi(a_j)=1$ for every $j$.
The columns lie in the affine hyperplane $H=\varphi^{-1}(1)$, whose direction space is $W=\ker\varphi$.
Throughout the paper, we denote $V$ the column space and $W$ this fixed codimension-one subspace. We have
\begin{equation}\label{eq-dimensions}
    \dim V=d,\qquad\dim H=\dim W=d-1.
\end{equation}
When $d\geq2$, we write $n=d-1$ for the dimension in which the projected hypersimplex mixed volumes are computed.
Write $[N]=\{1,\ldots,N\}$ and $r=N-d$.

A \emph{basis} is a subset $B\subset[N]$ of size $d$ for which $A_B\coloneqq A|_{\R^B}\colon\R^B\to V$ is an isomorphism.
Its weight is
\begin{equation}\label{eq-basis-weight}
    w_A(B)=\vol_V\left(\sum_{b\in B}[0, a_b]\right)=|\omega_V(\{a_b,\,b\in B\})|.
\end{equation}
The sum here is the Minkowski sum, and the absolute value makes the ordering in the last expression irrelevant.
We denote the set of basis by $\mathcal{B}$.
A \emph{circuit vector} is a nonzero vector of $\ker A$ with inclusion-minimal support.
For $j\notin B$, the normalized fundamental circuit vector $c^{B,j}$ is defined by
\begin{equation}\label{eq-fundamental}
    c^{B,j}_j=1,\quad c^{B,j}_B=-A_B^{-1}a_j,\quad c^{B,j}_k=0\quad (k\notin B\sqcup\{j\}),
\end{equation}
whose support is the unique circuit contained in $B\sqcup\{j\}$.

We say $\rho\in\R^N$ is a \emph{circuit-generic} vector if $\langle\rho,z\rangle\neq0$ for every circuit vector $z$.
Such vectors exist because there are only finitely many circuit supports, each carrying a one-dimensional dependence space. For a circuit-generic vector $\rho$, we define
\begin{equation}\label{eq-activity}
    \Ext_\rho(B)=\{j\notin B\mid\langle\rho,c^{B,j}\rangle>0\},\quad \ext_\rho(B)=|\Ext_\rho(B)|.
\end{equation}
This is the external semi-activity convention of \cite[Definitions 3.1 and 3.3]{KMP25}. The flat-arrangement polynomial is
\begin{equation}\label{eq-polynomial}
    f_{A,\rho}(t)\coloneqq\sum_{B\in \mathcal{B}} w_A(B)t^{\ext_\rho(B)}.
\end{equation}
It is independent of $\rho$ by \cite[Theorem 3.4]{KMP25}, hence we write
\begin{equation}\label{eq-coefficients}
    f_A(t)=f_{A,\rho}(t)=\sum_{i=0}^r\alpha_i t^i,\quad \alpha_i=0\quad\text{for }i\notin\{0,\ldots,r\}.
\end{equation}

\begin{theorem}
    \label{thm-main}
    The coefficients $\alpha_0,\ldots,\alpha_r$ of \eqref{eq-coefficients} are positive and palindromic:
    \begin{equation}\label{eq-palindromicity}
        \alpha_i=\alpha_{r-i},\qquad 0\leq i\leq r.
    \end{equation}
    With the zero extension in \eqref{eq-coefficients}, we have
    \begin{equation}\label{eq-quadratic}
        d\alpha_i^2\geq \alpha_i(\alpha_{i-1}+\alpha_{i+1})+(d-2)\alpha_{i-1}\alpha_{i+1},\qquad 0\leq i\leq r.
    \end{equation}
    In particular, the coefficient sequence is log-concave. At an internal index
    \begin{equation}\label{eq-lc-equality}
        \alpha_i^2=\alpha_{i-1}\alpha_{i+1}\quad \Longleftrightarrow \quad \alpha_{i-1}=\alpha_i=\alpha_{i+1}.
    \end{equation}
    Therefore, the coefficient sequence is trapezoidal with one centered plateau, and $\deg f_A=r$.
    For $d=1$, all coefficients are equal.
\end{theorem}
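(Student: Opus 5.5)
The plan is to deduce everything from a mixed-volume formula for the individual coefficients, followed by one application of the Alexandrov--Fenchel inequality.

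\emph{Elementary parts.} Palindromicity \eqref{eq-palindromicity} comes from replacing $\rho$ by $-\rho$. This keeps $\rho$ circuit-generic and swaps $\Ext_\rho(B)$ with its complement in $[N]\setminus B$, so $\ext_{-\rho}(B)=r-\ext_\rho(B)$. Independence of $\rho$ \cite[Theorem 3.4]{KMP25} then gives $\alpha_i=\alpha_{r-i}$. For $d=1$ every singleton $\{j\}$ is a basis of weight $1$ (because $\varphi(a_j)=1$). Choosing $\rho$ with strictly increasing entries gives exactly one basis of each activity $0,\dots,r$, so all coefficients are equal. From now on $d\ge2$ and $n=d-1$.

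\emph{Geometric input.} Project the hypersimplices $\Delta_{N,k}=\{x\in[0,1]^N:\sum x_j=k\}$, $1\le k\le N-1$, by $A$. Since $\varphi\circ A$ is the coordinate sum, $P_k:=A(\Delta_{N,k})$ lies in $kH$; translating gives a convex body in $W$. We fix on $W$ the volume induced by $\omega_V$ and $\varphi$.

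The key step is a coefficient formula. I would choose $\rho$ generic, take the regular subdivision of $\Delta_{N,k}$ (equivalently of $P_k$) induced by $\rho$, and push the fiber integral of \cite[Proposition 5.2]{GY26} through it. The target is a formula expressing each $\alpha_i$ through the mixed volumes of two consecutive bodies
\[
m^{(k)}_j=V\bigl(P_k[n-j],P_{k+1}[j]\bigr),\qquad 0\le j\le n .
\]
Concretely, I expect $\alpha_{i}$ to equal, up to a common normalizing factor, a single consecutive mixed volume: $n$ copies of $P$ and one of $Q$, where $\{P,Q\}$ are adjacent projected hypersimplices.

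The mechanism behind this is that a basis $B$ with $\ext_\rho(B)=i$ contributes a parallelepiped $\sum_{b\in B}[0,a_b]$, of volume $w_A(B)$, to the $\rho$-fine mixed subdivision of the Minkowski sum $P_{i+1}+P_{i+2}$ (up to the translation relating $\Delta_{N,k}$ to the hypersimplex slices). Its $W$-part then sits in the mixed cell of the right type. Verifying this cell bookkeeping, i.e.\ that external semi-activity matches exactly the mixed type of the cell in the fine mixed subdivision, is where I expect the main obstacle to lie. I would first check it on the uniform matroid and on a rank-$2$ example before proving it generally.

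\emph{Alexandrov--Fenchel step.} With the representation in hand, the inequality \eqref{eq-quadratic} should come from one Alexandrov--Fenchel inequality on the $n$-dimensional bodies, written in the form $V(K,L,\mathcal C)^2\ge V(K,K,\mathcal C)\,V(L,L,\mathcal C)$. The bodies $K,L$ will be chosen from $P_i,P_{i+1},P_{i+2}$, and $\mathcal C$ will consist of copies of $P_{i+1}$. One then rewrites the three mixed volumes via the representation. The extra terms should come from expanding $P_{i+1}$ linearly in terms of its neighbours. The useful identity here is
\[
d\alpha_i^2-\alpha_i(\alpha_{i-1}+\alpha_{i+1})-(d-2)\alpha_{i-1}\alpha_{i+1}
= n(\alpha_i^2-\alpha_{i-1}\alpha_{i+1})+(\alpha_i-\alpha_{i-1})(\alpha_i-\alpha_{i+1}).
\]
Boundary indices $i=0,r$ follow from the same computation with zero extensions, or directly from $\alpha_0>0$ and $\alpha_0\ge\alpha_1/d$.

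\emph{Consequences.} Positivity of all $\alpha_i$ follows from positivity of mixed volumes of full-dimensional bodies (each $P_k$ is full-dimensional in $W$ since $A$ spans). Log-concavity follows from the identity above: if $\alpha_i^2<\alpha_{i-1}\alpha_{i+1}$, then $\alpha_i$ lies strictly between $\alpha_{i-1}$ and $\alpha_{i+1}$ (by AM--GM), so both terms on the right are negative, a contradiction. The same identity gives \eqref{eq-lc-equality}. If $\alpha_i^2=\alpha_{i-1}\alpha_{i+1}$, the left side $\ge0$ forces $(\alpha_i-\alpha_{i-1})(\alpha_i-\alpha_{i+1})\ge0$, while $\alpha_i$ being the geometric mean forces this product $\le0$. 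Hence it vanishes and all three coefficients are equal.

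Trapezoidality with a single centered plateau then follows. Positive log-concave sequences are unimodal, the strict/equality dichotomy prevents interior flat pieces except a plateau, and palindromicity centers that plateau. Finally, $\alpha_r=\alpha_0>0$ gives $\deg f_A=r$.
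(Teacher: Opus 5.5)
Your elementary steps are fine. In particular, palindromicity via $\rho\mapsto-\rho$ together with the $\rho$-independence of \cite[Theorem 3.4]{KMP25} is a valid shortcut. (In rank one the common weight is $|\omega_V(a)|$, not necessarily $1$, which is harmless.)

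The genuine gap is the mixed-volume representation. Positivity and \eqref{eq-quadratic} both rest on it, and the form you guessed is false. A list of ``$n$ copies of $P$ and one of $Q$'' has $n+1$ entries in the $n$-dimensional space $W$. Any two-body mixed volume $\MV_W(P_k[n-j],P_{k+1}[j])$ also fails to isolate a coefficient once $n\ge3$. It is the mixed-Eulerian convolution $\frac1{n!}\sum_{a}\ME(q^{(a)})\alpha_{k-a-1}$ spread over several coefficients; for example, $\MV_W(P_k[2],P_{k+1})=(\alpha_{k-1}+2\alpha_{k-2})/3$ when $n=3$. The mechanism you describe also points at the wrong object. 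The tile $\sum_{j\in\Ext_\rho(B)}a_j+\sum_{b\in B}[0,a_b]$ of the $\rho$-regular zonotopal tiling spans levels $p=\ext_\rho(B)$ through $p+d$, so it meets all $n$ interior levels $p+1,\dots,p+n$ in full-dimensional hypersimplex slices. The correct statement is therefore $\alpha_i=\MV_W(P_{i+1},P_{i+2},\dots,P_{i+n})$, with $n$ distinct consecutive bodies. The paper proves this in three steps:
\begin{enumerate}
\item it decomposes this mixed volume over the upper cells of the $\rho$-lifted polytopes $\widehat P_k$;
\item it shows, via circuit genericity, that the only contributing cells come from bases whose tied score block contains all selected cuts;
\item it uses $\MV_{E_n}(D_1^{(n)},\dots,D_n^{(n)})=1$.
\end{enumerate}

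Without this window formula, the Alexandrov--Fenchel step cannot produce \eqref{eq-quadratic}. The paper applies \eqref{eq-AF} with $K_1=P_{i+1}$, $K_2=P_{i+n}$ and common list $P_{i+2},\dots,P_{i+n-1}$. The repeated-endpoint lists then equal $(\alpha_i+(n-1)\alpha_{i-1})/n$ and $(\alpha_i+(n-1)\alpha_{i+1})/n$. This needs the constants $\MV_{E_n}(D_1^{(n)}[2],D_2^{(n)},\dots,D_{n-1}^{(n)})=1/n$ and $\MV_{E_n}(D_2^{(n)}[2],D_3^{(n)},\dots,D_n^{(n)})=(n-1)/n$.

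Your choice ($K,L$ among $P_i,P_{i+1},P_{i+2}$ and $\mathcal{C}$ consisting of copies of $P_{i+1}$) fails for $n\ge4$: it gives mixed volumes that are convolutions over $n-2$ or more coefficients. Also, $P_{i+1}$ cannot be expanded linearly in its neighbours, since only the inclusion $\frac12(P_i+P_{i+2})\subseteq P_{i+1}$ holds.

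There are further problems:
\begin{itemize}
\item Alexandrov--Fenchel needs $n\ge2$, so $d=2$ requires a separate one-dimensional argument (lengths plus the midpoint inclusion).
\item Your fallback for the endpoints is circular, since $\alpha_1\le d\alpha_0$ is exactly the $i=0$ case of \eqref{eq-quadratic}.
\item Your deduction of log-concavity is incorrect. The condition $\alpha_i^2<\alpha_{i-1}\alpha_{i+1}$ does not place $\alpha_i$ between its neighbours (e.g.\ $4,1,4$), and in that case $(\alpha_i-\alpha_{i-1})(\alpha_i-\alpha_{i+1})>0$. Instead, insert $\alpha_{i-1}+\alpha_{i+1}\ge2\sqrt{\alpha_{i-1}\alpha_{i+1}}$ into \eqref{eq-quadratic}. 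This gives $(x-1)(dx+d-2)\ge0$ for $x=\alpha_i/\sqrt{\alpha_{i-1}\alpha_{i+1}}$, and hence $x\ge1$.
\end{itemize}
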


Here a positive sequence $(u_i)$ is \emph{log-concave} if $u_i^2\geq u_{i-1}u_{i+1}$ at every internal index. The endpoint instances of \eqref{eq-quadratic} give $\alpha_1\leq d\alpha_0$ and its reflected counterpart when $r\geq1$.
They will lead to a binomial upper bound on the growth of all coefficients.
The rank-sensitive inequality also has the following power-concavity consequence, obtained by the numerical implication in Lemma \ref{lemma-power-numerical}.

\begin{corollary}\label{cor-power}
    Suppose $d\geq2$. We have
    \begin{equation}\label{eq-power}
        2\alpha_i^{1/(d-1)}\geq \alpha_{i-1}^{1/(d-1)} + \alpha_{i+1}^{1/(d-1)},\quad 1\leq i\leq r-1.
    \end{equation}
    More generally, $(\alpha_i^p)_{i=0}^r$ is concave for every $0<p\leq1/(d-1)$. For $d\geq3$, equality in \eqref{eq-power} holds exactly at a locally constant coefficient triple.
\end{corollary}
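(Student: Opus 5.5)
The plan is to derive Corollary~\ref{cor-power} from the quadratic inequality \eqref{eq-quadratic} of Theorem~\ref{thm-main}. This is a purely numerical implication, which I would isolate as Lemma~\ref{lemma-power-numerical}: for $d\geq2$, $n=d-1$, and positive reals $a,b,c$ with $db^2\geq b(a+c)+(d-2)ac$, we have $2b^{1/n}\geq a^{1/n}+c^{1/n}$, with equality for $d\geq3$ only when $a=b=c$. At an internal index $1\leq i\leq r-1$, the three coefficients $\alpha_{i-1},\alpha_i,\alpha_{i+1}$ are positive by Theorem~\ref{thm-main}, so the lemma applies with $(a,b,c)=(\alpha_{i-1},\alpha_i,\alpha_{i+1})$.

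To prove the lemma, I first normalize $b=1$, which is allowed because both inequalities are homogeneous. The hypothesis becomes $g(a,c)\coloneqq a+c+(d-2)ac\leq d$. Set $x=a^{1/n}$ and $y=c^{1/n}$, and suppose for contradiction that $x+y>2$. Shrink $(x,y)$ to a point $(x',y')$ with $x'\leq x$, $y'\leq y$, $x'+y'=2$, and at least one strict decrease. Since $g$ is strictly increasing in each variable on the positive quadrant when $d\geq2$, we get $g(x^n,y^n)>g(x'^n,y'^n)$. It therefore suffices to show that on the segment $x+y=2$,
\begin{equation*}
x^n+y^n+(n-1)x^ny^n\geq n+1=d .
\end{equation*}
Write $x=1+s$ and $y=1-s$ with $0\leq s<1$. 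Keeping only the even binomial terms gives $x^n+y^n\geq 2+n(n-1)s^2$. Bernoulli's inequality gives $(xy)^n=(1-s^2)^n\geq 1-ns^2$. Adding these, with the factor $n-1\geq0$ on the second, the right-hand sides sum to exactly $n+1$. This yields $g>d$, a contradiction.

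For the equality case with $d\geq3$, so $n\geq2$, suppose $x+y=2$ exactly. The hypothesis gives $g\leq d$, while the estimate above gives $g\geq d$. Hence Bernoulli's inequality must be an equality, and since its coefficient $n-1$ is positive and $n\geq2$, this forces $s=0$. Thus $a=b=c$. Conversely, a locally constant triple clearly gives equality. For $d=2$ the statement is simply $2b\geq a+c$, which is \eqref{eq-quadratic} divided by $b$, and no equality characterization is claimed.

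For general $0<p\leq 1/(d-1)$, set $u_i=\alpha_i^{1/n}$, which is concave at internal indices by the above, and $q=pn\in(0,1]$. Since $t\mapsto t^q$ is concave and increasing,
\begin{equation*}
u_i^q\geq\left(\frac{u_{i-1}+u_{i+1}}{2}\right)^q\geq\frac{u_{i-1}^q+u_{i+1}^q}{2},
\end{equation*}
so $(\alpha_i^p)$ is concave. I expect the main obstacle to be the boundary reduction to the segment $x+y=2$ together with the sharp two-term estimate. It works precisely because the exponent $1/(d-1)$ matches the coefficient $d-2=n-1$, so that the $s^2$ terms cancel.
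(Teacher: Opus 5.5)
Your proposal is correct and follows the paper's proof essentially step for step. It uses the same reduction to Lemma~\ref{lemma-power-numerical}, the same boundary estimate on the segment $x+y=2$ (even binomial terms plus Bernoulli's inequality $(1-s^2)^n\geq 1-ns^2$, which also settles the equality case for $n\geq2$), and the same argument via concavity of $t\mapsto t^q$ for general $p$. The only cosmetic difference is that you reach the segment by coordinatewise monotonicity of $a+c+(d-2)ac$, whereas the paper rescales homothetically by $\lambda=2/(x+z)$; both work equally well.
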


Power-concavity is stronger than ordinary log-concavity, while the quadratic inequality contains additional information. For $d=2$, the two displayed inequalities are equivalent at positive internal triples. For $d\geq3$, the quadratic inequality is strictly stronger as an inequality on positive triples.

The form of \eqref{eq-quadratic} is related to the inequality of Berget, Spink, and Tseng for consecutive coefficients of $T_M(1,t)$ \cite[Theorem 11.1]{BST23}.
Note that \eqref{eq-quadratic} is equivalent to
\begin{equation}\label{eq-bst-form}
    (d-1)(\alpha_i^2-\alpha_{i-1}\alpha_{i+1}) + (\alpha_i-\alpha_{i-1})(\alpha_i-\alpha_{i+1})\geq 0.
\end{equation}
Their theorem concerns the Tutte specialization, whereas our coefficients use external semi-activity and realization-dependent determinant weights. Even when every basis weight is one, the two enumerators need not agree. The similarity of the inequalities is explained by their common use of consecutive hypersimplex classes rather than an identification of the polynomials.

\subsection{The mixed-volume representation}

Assume $d\geq2$, so $n=d-1\geq1$. Choose $v\in H$ and set $b_j=a_j-v\in W$.
Give $W$ the volume density determined by
\begin{equation}
    \label{eq-density-intro}
    \omega_W(w_1,\ldots,w_n)\coloneqq\omega_V(w_1,\ldots,w_n,v),
\end{equation}
which by definition does not depend on the choice of $v$.
For $1\leq k\leq N-1$, define the projected hypersimplex
\begin{equation}
    \label{eq-Pk-intro}
    P_k\coloneqq \op{conv}\left\{\sum_{j\in I}b_j\mid I\subset[N],\ |I|=k\right\}\subset W.
\end{equation}
We write $\MV_W$ for mixed volume in the $n$-dimensional space $W$, normalized by $\MV_W(K,\ldots,K)=\vol_W(K)$.
Its definition and the analogous notation for other density spaces are given in Section \ref{section-mixed}.

The key structural result used throughout the paper is Theorem \ref{thm-structural}, which relates mixed volumes of consecutive projected hypersimplices to coefficients of $f_A$.
The following specialization, where every entry occurs once, identifies the individual coefficients. Two specializations with repeated entries then lead to Theorem \ref{thm-main} and Corollary \ref{cor-power}.

\begin{theorem}\label{thm-representation}
    For a flat arrangement of rank $d\geq2$, with $n=d-1$ and $\omega_W$ given by \eqref{eq-density-intro}, we have
    \begin{equation}\label{eq-window-intro}
        \alpha_i=\MV_W(P_{i+1},P_{i+2},\ldots,P_{i+n})\qquad(0\leq i\leq r).
    \end{equation}
\end{theorem}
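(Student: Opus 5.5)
The plan is to identify each projected hypersimplex $P_k$ with a slice of the zonotope $Z=\sum_j[0,a_j]\subset V$. Then a single generic lift $\rho$ gives compatible mixed subdivisions of all Minkowski combinations $\sum_k\lambda_kP_k$ at once. Finally, the window mixed volume is read off tile by tile.

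\textbf{Step 1: $P_k$ as an image of a hypersimplex.} Let $\Delta_k\subset\R^N$ be the hypersimplex, that is, the slice $\{\sum x_j=k\}$ of the cube $[0,1]^N$. Since $\varphi(a_j)=1$, the map $A$ sends the cube slice $\{\sum x_j=t\}$ onto the slice $Z\cap\varphi^{-1}(t)$. The vertex $e_I$ of $\Delta_k$ maps to $\sum_{j\in I}a_j=\sum_{j\in I}b_j+kv$, so $A(\Delta_k)=P_k+kv$. The mixed volume is translation invariant and multilinear under Minkowski sums. Hence $\MV_W(P_{i+1},\ldots,P_{i+n})$ is a mixed volume of the bodies $A(\Delta_{i+1}),\ldots,A(\Delta_{i+n})$, each viewed in the affine hyperplane it spans, parallel to $W$.

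\textbf{Step 2: a coherent tiling.} Fix a circuit-generic $\rho$. The regular fine zonotopal tiling of $Z$ induced by lifting $a_j$ to height $\rho_j$ has tiles
\[
T_B=\sum_{j\in\Ext_\rho(B)}a_j+\sum_{b\in B}[0,a_b],\qquad B\in\mathcal B.
\]
I would check this sign convention against \eqref{eq-activity} using the fundamental circuits \eqref{eq-fundamental}, possibly after replacing $\rho$ by $-\rho$. On $T_B$ the form $\varphi$ ranges over $[e,e+d]$ with $e=\ext_\rho(B)$. Because all slices come from the same lifted cube, the slices $T_B\cap\varphi^{-1}(k)$ form a regular subdivision of $A(\Delta_k)$ for every $k$ simultaneously, all induced by the lift $\rho$.

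\textbf{Step 3: mixed subdivisions via the Cayley trick.} Apply the Cayley trick to these coherent lifts. For all $\lambda_k\ge0$, the combination $\sum_k\lambda_kA(\Delta_k)$ then has a mixed subdivision whose cells are the sets $A_B\bigl(\sum_k\lambda_k\Delta^{(d)}_{k-e}\bigr)$, up to translation, where $\Delta^{(d)}_m$ is the $m$-th hypersimplex of the $d$-cube $[0,1]^B$. This gives the tile-by-tile decomposition
\[
\MV_W(P_{i+1},\ldots,P_{i+n})=\sum_{B}w_A(B)\,\MV\bigl(\Delta^{(d)}_{i+1-e},\ldots,\Delta^{(d)}_{i+n-e}\bigr),\qquad e=\ext_\rho(B).
\]
The determinant factor $w_A(B)$ appears because $A_B$ scales $\omega_V$ by $|\omega_V(a_B)|$. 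On the hyperplane slices, $\omega_W$ is obtained from $\omega_V$ by contracting with a transversal vector $v$ satisfying $\varphi(v)=1$, and this normalization is compatible on both sides.

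\textbf{Step 4: the cube computation.} The indices in the window run from $i+1-e$ to $i+n-e$, a string of $n=d-1$ consecutive values. If any of them lies outside $\{1,\ldots,d-1\}$, one of the bodies is a point ($\Delta^{(d)}_0$ or $\Delta^{(d)}_d$) or empty. A point entry forces the mixed volume in dimension $n\ge1$ to vanish. So the only surviving term is $e=i$, which contributes $\MV(\Delta^{(d)}_1,\ldots,\Delta^{(d)}_{d-1})$ in the normalization $\omega(\cdot,e_1)$. This value should be $1$; it is the mixed Eulerian number with all entries one, divided by $n!$. As a check, for $d=2$ the body $\Delta_1$ is the segment from $e_1$ to $e_2$, and its length is $|\det(e_2-e_1,e_1)|=1$. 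Summing over $B$ then gives $\alpha_i=\sum_{\ext_\rho(B)=i}w_A(B)$, which is \eqref{eq-window-intro}.

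\textbf{Main obstacle.} I expect the hardest part to be Step 3. One must make rigorous that a single lift yields a coherent mixed subdivision of arbitrary positive combinations of \emph{different} slices, not just of one slice. One must also show that the cells really are linear images of hypersimplex combinations of the basis cubes with the stated translations. I would first try the Cayley trick: embed all the slices in the Cayley polytope of $\Delta_1,\ldots,\Delta_{N-1}$, transport the regular subdivision of the cube by the lift $\rho$, and then verify that the cells are the stated products.
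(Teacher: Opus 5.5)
Your overall route matches the paper's: a single coherent lift by $\rho$, additivity of the window mixed volume over the cells of the induced mixed subdivision, and the constant $\MV(\Delta^{(d)}_1,\ldots,\Delta^{(d)}_{d-1})=1$. The paper lifts $P_k$ to $\widehat P_k=\op{conv}\{(b_I,\rho_I)\}$ and uses upper faces of $\sum_k\lambda_k\widehat P_k$ instead of the Cayley trick. It proves the constant by a facet-formula induction; citing $A_{1,\ldots,1}=n!$ is also fine, and your density check with $\omega(\cdot,e_1)$ is correct.

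\textbf{The gap is in Step 3.} Its cell description is not merely unproved; it is false for $d\geq4$. A cell of the regular mixed subdivision is $\sum_k\lambda_kF_k(u)$ for a common upper normal $u$. Here $F_k(u)$ is determined by where the cut $k$ falls among the tied blocks of the scores $s_j(u)=\rho_j+u(b_j)$. Different cuts can fall in different blocks, so they slice different faces of the zonotopal tiling. Faces at ``out-of-range'' cuts are also never empty.

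Concretely, take $d=4$, $N=5$, the window $P_2,P_3,P_4$, and the $b_j$ in general position. Choose $u$ with $s_1=s_2=s_3>s_4=s_5$. Three linear conditions fix $u$, and raising $\rho_1,\rho_2,\rho_3$ by a common generic constant gives the strict inequality while keeping $\rho$ circuit-generic. Then $F_2(u)$ is a triangle, $F_3(u)=\{b_1+b_2+b_3\}$, and $F_4(u)$ is a transverse segment. The resulting cell is a prism, not of the form $A_B(\sum_k\lambda_k\Delta^{(4)}_{k-e})$.

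In fact no subdivision of the kind you describe can exist. It would give $\vol_W(\sum_k\lambda_kP_k)=\sum_Bw_A(B)\vol(\sum_k\lambda_k\Delta^{(d)}_{k-e})$ as polynomials in $\lambda$. Comparing coefficients of $\lambda_2^2\lambda_4$ would then give $\MV_W(P_2,P_2,P_4)=\alpha_1\MV(\Delta^{(4)}_1,\Delta^{(4)}_1,\Delta^{(4)}_3)$. The prism cell, however, contributes the extra positive term $\MV_W(F_2(u),F_2(u),F_4(u))$.

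\textbf{What you actually need} is additivity over \emph{all} full-dimensional cells, $\MV_W(P_{i+1},\ldots,P_{i+n})=\sum_u\MV_W(F_{i+1}(u),\ldots,F_{i+n}(u))$, together with a classification of the nonzero local terms. This classification is the paper's main step, and it runs as follows.
\begin{enumerate}
\item If a local term is nonzero, no face is a point, so every cut lies strictly inside a tied block.
\item Since the cuts $i+1,\ldots,i+n$ are consecutive integers, they all lie in one block $B$. A boundary between two blocks would itself be one of the cuts and would select a point.
\item All face directions then lie in $\op{span}\{b_j-b_k: j,k\in B\}$, so $|B|\geq d$.
\item Circuit-genericity makes every tied block linearly independent. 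A circuit vector $z$ supported in a block of common score $q$ would give $\langle\rho,z\rangle=\sum_jz_j(q-u(b_j))=0$, using flatness.
\item Hence $B$ is a basis, $u=u_B$, the labels above $B$ are $\Ext_\rho(B)$, and the cut range forces $\ext_\rho(B)=i$.
\item Conversely, each such $B$ gives exactly one cell, of value $w_A(B)\MV_{E_n}(D_1^{(n)},\ldots,D_n^{(n)})=w_A(B)$.
\end{enumerate}
Your Step 4 only disposes of single-basis cells with $e\neq i$. The cross-block cells are exactly where consecutiveness of the window is essential, which is also why the analogous identity fails for non-consecutive lists.
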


Galashin, Postnikov, and Williams identified determinant-weighted semi-activity levels with level volume invariants of fine zonotopal tilings \cite[Proposition 4.1, Lemma 5.13, and Proposition 6.2]{GPW22}.
Their slice-volume formula is recovered by the single-body case of our structural theorem.
The consecutive mixed-volume representation supplies the additional geometric information needed for the coefficient inequalities.

\subsection{Further mixed-volume consequences}

The same structural formula controls all placements of a fixed consecutive multiplicity pattern.
For a decomposition $c=(c_1,\dots, c_\ell)$ of $n$, with each $c_j\geq1$, let
\begin{equation}\label{eq-Mi-intro}
    M_i(c)\coloneqq\MV_W(P_{i+1}[c_1],\ldots,P_{i+\ell}[c_\ell]),\quad S_c(t)\coloneqq \sum_{i=0}^{N-\ell-1}M_i(c) t^i,
\end{equation}
where $K[q]$ denotes $q$ separate entries equal to $K$.
The universal mixed-Eulerian polynomial $E_c$ is characterized by
\begin{equation}\label{eq-Ec-intro}
    \frac{E_c(t)}{(1-t)^{n+1}} = \sum_{m\geq 0}\prod_{j=1}^{\ell}(m+j)^{c_j} t^m.
\end{equation}
This identity is due to Berget--Spink--Tseng \cite[Theorem 7.13 and Remark 7.14]{BST23} and Nadeau--Tewari \cite[Proposition 4.5]{NT23}. The resulting factorization and coefficient shape statement is as follows.

\begin{theorem}
    \label{thm-sequences-intro}
    For every decomposition $c$ as above,
    \begin{equation}\label{eq-factor-intro}
        n!S_c(t)= E_c(t) f_A(t),\qquad \sum_{i=0}^{N-\ell-1} M_i(c)=f_A(1).
    \end{equation}
    The polynomial $E_c$ has degree $n-\ell$, positive coefficients, and only negative real zeros unless it is constant. The sequence $(M_i(c))_{i=0}^{N-\ell-1}$ is positive and log-concave.
\end{theorem}

The single-body case $c=(n)$ is the Eulerian convolution for zonotope slice volumes obtained from \cite[Lemma 3.11, proof of Proposition 4.1]{GPW22}.
The formula for arbitrary consecutive multiplicities extends that relation to mixed volumes, which is also a convex-geometric counterpart of the matroid intersection factorization in \cite[Theorem 10.2]{BST23}, replacing the different polynomial $T_M(1,t)$ by $f_A$. The universal factor and its properties are known.
In Section \ref{section-sequences} we recall these notions and give a direct proof.

\subsection{Eulerian digraphs and circulation weights}

A directed multigraph is \emph{connected} when its underlying undirected multigraph is connected. It is \emph{Eulerian} if it is connected and the in-degree and out-degree agree at every vertex.
Given a root $u_0$ and a spanning tree $T$ of the underlying labeled multigraph, let $\kappa_{u_0}(T)$ count the arcs that must be reversed to orient the tree toward $u_0$.
For a connected Eulerian digraph $D$, the Murasugi--Stoimenow polynomial $P_D(t)=\sum_T t^{\kappa_{u_0}(T)}$ is independent of the root \cite[Proposition 1]{MS03}.
Corollary \ref{cor-Eulerian} proves that its coefficients are positive and log-concave, which solves the conjecture of Hafner, M\'esz\'aros, and Vidinas \cite[Conjecture 1.6]{HMV25}.
It also gives the quadratic and power-concavity refinements with rank parameter $\beta(D)=|E(D)|-|V(D)|+1$, where loops are deleted before this parameter is computed.

These conclusions extend to digraphs carrying positive real circulation weights, whose unweighted in-degrees and out-degrees need not agree.
We say that positive arc weights $w_e$ form a \emph{circulation} if
\begin{equation}\label{eq-balance-intro}
    \sum_{\op{tail}(e)=u} w_e = \sum_{\op{head}(e)=u} w_e\quad \text{ at every vertex } u.
\end{equation}
The corresponding weighted tree polynomial is
\begin{equation}\label{eq-PDw-intro}
    P_{D,w}(t)=\sum_T \left(\prod_{e\in T} w_e\right) t^{\kappa_{u_0}(T)} = \sum_{i=0}^{\nu-1} c_i(D,w) t^i,\quad \nu=|V(D)|.
\end{equation}
The loops may be deleted, and arcs with the same ordered endpoints may be replaced by a single arc carrying their total weight.
Let $m_{\red}$ be the number of remaining arcs and $\beta_{\red}=m_{\red}-\nu+1$.
Anti-parallel arcs remain distinct.

\begin{theorem}\label{thm-weighted-intro}
    For a connected directed multigraph with a positive real circulation, $P_{D,w}$ is independent of the root and has positive, palindromic coefficients. If $\beta_{\red}\geq1$, then at every internal index
    \begin{equation}\label{eq-weighted-intro}
        \beta_{\red} c_i^2\geq c_i(c_{i-1}+c_{i+1})+(\beta_{\red}-2)c_{i-1}c_{i+1}.
    \end{equation}
    The coefficients are log-concave, with equality only at a locally constant triple. For $\beta_{\red}\geq 2$, their $1/(\beta_{\red}-1)$-st powers form a concave sequence; for $\beta_{\red}=1$, the coefficients are constant.
    The trivial case of one vertex without edges has polynomial $1$.
\end{theorem}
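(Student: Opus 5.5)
The plan is to realize $P_{D,w}$ as a flat-arrangement polynomial and then read off the conclusions from Theorem \ref{thm-main} and Corollary \ref{cor-power}.

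\textbf{Reduction.} Deleting loops does not change $P_{D,w}$, since loops never lie in spanning trees. Merging parallel arcs with the same ordered endpoints into one arc of total weight also leaves it unchanged. Indeed, such arcs have the same reversal count relative to $u_0$, so summing $\prod_{e\in T} w_e$ over the choices of one arc among them produces the total weight. After these reductions the circulation condition \eqref{eq-balance-intro} still holds.

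\textbf{Cographic realization.} Let $D$ have $\nu$ vertices and $m=m_{\red}$ arcs. I follow the cographic model of \cite{KMP25} as used in \cite{GY26}.

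Take the cycle space $Z=\ker(\partial)\subset\R^{E}$, of dimension $\beta=\beta_{\red}$. Set $V=Z^*$. The arrangement sends the basis vector $e_j$ to $a_j$, the restriction of the $j$-th coordinate functional to $Z$, scaled as described below.

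By circulation balance, $w\in Z$ is a positive circulation. Hence $\varphi=w$, viewed as an element of $V^*=Z$, satisfies $\varphi(x_j)=w_j$ for $x_j$ the $j$-th coordinate functional. Rescaling, $a_j=x_j/w_j$ satisfies $\varphi(a_j)=1$, so $A$ is a flat arrangement of rank $d=\beta$. It spans $V$ because $Z\to\R^E$ is injective.

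Bases of this cographic arrangement are complements of spanning trees. Since the graphic matroid is unimodular, the determinant of the complement $E\setminus T$ in the coordinate functionals is $\pm1$ with respect to a lattice volume form. With the rescaling, $w_A(E\setminus T)=\prod_{e\notin T}w_e^{-1}$.

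Multiplying all weights by the constant $\prod_{e\in E}w_e$, which leaves every coefficient ratio and inequality unchanged, gives weight $\prod_{e\in T}w_e$.

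\textbf{Matching the exponents.} The remaining step, and the main obstacle, is to show that $\ext_\rho(E\setminus T)$ equals $\kappa_{u_0}(T)$ (or $\nu-1-\kappa_{u_0}(T)$) for a suitable circuit-generic $\rho$. This is the identification carried out for Eulerian digraphs in \cite{KMP25} and \cite[Section 6]{GY26}.

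Circuits of the cographic arrangement are cuts. For $j\in T$, the fundamental circuit of $E\setminus T$ together with $j$ is the fundamental cut of $T$ at $j$. Its normalized circuit vector has coordinates $\pm w_e$-type entries, with signs given by the direction in which each arc crosses the cut.

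I would choose $\rho$ as a perturbation of the cut-indicator functional determined by the root. Concretely, $\langle\rho,c\rangle$ measures the crossing relative to the side containing $u_0$, plus a small generic term. Then $\langle\rho,c^{E\setminus T,j}\rangle>0$ exactly when the tree arc $j$ points away from $u_0$.

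Only the sign structure enters, and the weights cancel between the two sides of a balanced cut because of the circulation condition. This is the point to check carefully: balance makes the cut sums vanish, so the decision depends only on the orientation of $j$ and the perturbation. I would adapt the proof of \cite{GY26} verbatim, replacing unit weights by $w$.

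\textbf{Conclusions.} Root independence then follows from the $\rho$-independence \cite[Theorem 3.4]{KMP25}, since different roots correspond to different admissible $\rho$. Positivity and palindromicity, the quadratic inequality \eqref{eq-weighted-intro} with $d=\beta_{\red}$, log-concavity with its equality case \eqref{eq-lc-equality}, and power concavity all follow from Theorem \ref{thm-main} and Corollary \ref{cor-power}. Here $r=m-\beta=\nu-1$.

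If $\beta_{\red}=1$, then $d=1$ and Theorem \ref{thm-main} gives constant coefficients. If $\beta_{\red}=0$, a positive circulation forces there to be no edges, so $\nu=1$ and the polynomial is $1$.
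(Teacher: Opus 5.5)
\textbf{There is a genuine gap, at the step you flag yourself.} Your reduction and cographic realization match the paper's Proposition~\ref{prop-weighted-bridge}: columns $x_j|_Z/w_j$, flatness from $w\in\ker\partial$, and basis weights $\prod_{e\notin T}w_e^{-1}$ rescaled to $\prod_{e\in T}w_e$. The final appeal to Theorem~\ref{thm-main} and Corollary~\ref{cor-power} is also the same. What is missing is a concrete circuit-generic $\rho$ with $\ext_\rho(E\setminus T)=\kappa_{u_0}(T)$.

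Write $q=\partial^T\one_R$ for the fundamental cut of $T$ at $j$, where $R$ is the side containing $u_0$. Then $c^{S,j}=q/q_j$, so $\langle\rho,c^{S,j}\rangle=\langle\rho,q\rangle/q_j$. What you need is one $\rho$ with $\langle\rho,\partial^T\one_R\rangle<0$ for \emph{every} bond with $u_0\in R$.

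Your sketch does not supply this, for three reasons:
\begin{itemize}
\item If the main term pairs to zero with all cut vectors by balance, that is just flatness (adding a multiple of $\one$). The sign is then decided entirely by the unspecified ``small generic term,'' and nothing forces that term to have the required sign on all root-side bonds at once.
\item The weights do not cancel ``because of the circulation condition.'' They cancel because one takes $\widetilde\rho=D_w^{-1}\rho$. The circulation is used only for flatness.
\item Adapting the Eulerian argument verbatim fails at exactly this point. That argument (\cite[Lemma 5.2]{KMP25}) gets the needed cut order from an Euler tour, which does not exist on a non-Eulerian support such as Example~\ref{example-nonEulerian-support}.
\end{itemize}

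\textbf{How the paper closes the gap.} It uses Lemma~\ref{lemma-rooted-cut-order}. No arc leaves the set of vertices reachable from $u_0$. Balance plus positivity then forbid arcs entering it, and connectedness makes every vertex reachable. Take a shortest-path out-arborescence from $u_0$, and order its arcs parent-before-child, followed by all remaining arcs. The least arc crossing any cut then lies in this arborescence and leaves the root side. So $\rho_{e_k}=2^{m-k}$ gives the required sign (Lemma~\ref{lemma-tree-activity}), and $\widetilde\rho=D_w^{-1}\rho$ transfers this to the scaled arrangement.

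\textbf{A direct repair of your idea.} Alternatively, take any $\rho$ whose net inflow $\partial\rho$ is $1$ at each $v\neq u_0$ and $-(\nu-1)$ at $u_0$. For example, route one unit from $u_0$ to each vertex along a fixed spanning tree. Then $\langle\rho,\partial^T\one_R\rangle=|R|-\nu<0$ when $u_0\in R$, and $=|R|>0$ otherwise. So $\rho$ is circuit-generic with exactly the needed signs, and no arc order is required.

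With either choice supplied, the rest of your argument goes through, including root independence via $\rho$-independence.
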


Unit weights on an Eulerian digraph satisfy \eqref{eq-balance-intro}, so Theorem \ref{thm-weighted-intro} includes the unweighted result.
Example \ref{example-nonEulerian-support} exhibits a positive circulation on a support that is not Eulerian.
In Section \ref{section-graphs} we prove the weighted polynomial identity by rescaling the columns of a cographic matrix. The construction applies directly to positive real weights.

\subsection{Special alternating links}

Consider a connected reduced special alternating diagram of a link $L$, with $c$ crossings and $s$ Seifert circles. The diagram conventions are recalled in Section \ref{section-links}.
We write the positive support normalization of its Alexander polynomial as
\begin{equation}
    \label{eq-link-intro-poly}
    F_L(t)\eqdot\Delta_L(-t),\qquad F_L(t)=\sum_{i=0}^{c-s+1}\gamma_i t^i,
\end{equation}
where $\eqdot$ means equality up to a unit $\pm t^k$.

\begin{theorem}\label{thm-links-intro}
    The coefficients $\gamma_i$ are positive and palindromic, and
    \begin{equation}
        \label{eq-link-intro}
        (s-1)\gamma_i^2\geq\gamma_i(\gamma_{i-1}+\gamma_{i+1}) + (s-3)\gamma_{i-1}\gamma_{i+1},\quad 1\leq i\leq c-s.
    \end{equation}
    For $s\geq3$, the sequence $(\gamma_i^{1/(s-2)})$ is concave.
    The sequence $(\gamma_i)$ is log-concave, with equality exactly at a locally constant triple.
    The same equality characterization holds for power-concavity when $s\geq4$.
\end{theorem}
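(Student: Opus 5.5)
The plan is to reduce Theorem \ref{thm-links-intro} to Theorem \ref{thm-weighted-intro}, or rather to its unweighted form, Corollary \ref{cor-Eulerian}, applied to the Seifert graph of the special alternating diagram. Let $D$ be the connected reduced special alternating diagram. Its Seifert circles and crossings form a planar bipartite multigraph $G$ with $s$ vertices and $c$ edges. Because the diagram is special alternating, the planar dual of $G$, taken with the orientation induced by the checkerboard coloring (equivalently, the graph on the non-Seifert regions), is a plane Eulerian digraph. The classical Murasugi--Stoimenow identification \cite{MS03} (see also \cite{HMV24,KMP25}) then gives $\Delta_L(-t)\eqdot P_{\vec G^*}(t)$, up to a unit, for a connected Eulerian digraph $\vec G^*$.

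First I will check that the parameters match. The dual has $|V|=c-s+2$ vertices, by Euler's formula, and $c$ edges. So $\deg P=\nu-1=c-s+1$, which agrees with \eqref{eq-link-intro-poly}. The cyclomatic number is $\beta=c-(c-s+2)+1=s-1$.

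Next I must confirm that the reduction to $\beta_{\red}$ in Theorem \ref{thm-weighted-intro} does not lower $\beta$. Two facts are needed:
\begin{itemize}
\item The diagram is reduced, so $G$ has no bridges, and hence the dual has no loops.
\item Parallel arcs in the dual with the same ordered endpoints correspond to a pair of crossings between two Seifert circles bounding a bigon region.
\end{itemize}
The second point could merge arcs and reduce $\beta_{\red}$. I will handle this directly rather than rely on merging. Corollary \ref{cor-Eulerian} is stated with $\beta(D)$ computed after deleting loops only, not after merging parallel arcs. That statement already supplies the inequality with parameter $s-1$. Equivalently, take the cographic arrangement with rank $d=\beta=s-1$ and apply Theorem \ref{thm-main}. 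Substituting $d=s-1$ into \eqref{eq-quadratic} gives exactly \eqref{eq-link-intro}, and Corollary \ref{cor-power} with exponent $1/(d-1)=1/(s-2)$ gives the power-concavity for $s\geq3$.

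Positivity and palindromicity come from Theorem \ref{thm-main}. The equality characterization for log-concavity is \eqref{eq-lc-equality}. The power-concavity equality for $d\geq3$ translates to $s\geq4$. Positivity also rules out degenerate cancellation, so $F_L$ has exactly the stated support.

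The main obstacle is the bookkeeping in the identification. This means fixing the correct orientation convention and the sign substitution $t\mapsto -t$, so that $F_L$ has positive coefficients equal to the tree polynomial. It also means verifying that the rank parameter entering Theorem \ref{thm-main} is $s-1$ and not a smaller reduced value. I would cite \cite[Section 6]{GY26} or \cite{KMP25} for the cographic realization, which identifies $P_D$ with $f_A$ for the cographic matrix of rank $\beta$. I would then check that this realization does not require merging parallel arcs, so that $d=s-1$.
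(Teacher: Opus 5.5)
Your proposal is correct and follows essentially the paper's proof. The paper takes the Seifert graph $G$ with $|V(G)|=s$ and $|E(G)|=c$, notes it is bridgeless because the diagram is reduced, passes to the loopless Eulerian plane dual $D$ with $|V(D)|=c-s+2$ and $\beta(D)=s-1$, applies the Murasugi--Stoimenow identity $\Delta_L(-t)\eqdot P_D(t)$, uses positivity to fix the normalization, and then invokes Corollary \ref{cor-Eulerian} with the unmerged $\beta=s-1$. That is your argument.

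Two small remarks. The only bookkeeping you leave implicit is the paper's reduction of a negative special alternating diagram to a positive one by mirroring, which preserves $c$, $s$, and $\Delta_L$ up to a unit. Your concern that merging parallel arcs might lower the parameter is moot: a smaller parameter would only strengthen the inequality, given log-concavity.
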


Ordinary log-concavity for special alternating links was already proved in \cite{HMV24}.
Theorem \ref{thm-links-intro} refines it by the diagram-dependent quadratic inequality, a positive power-concavity exponent, and equality rigidity.
Its parameter comes from the cycle rank of the dual Eulerian digraph, which equals $s-1$.

\subsection{Organization}

Section \ref{section-mixed} fixes the mixed-volume conventions and evaluates three hypersimplex constants.
Section \ref{section-projected} records the geometry of the projected hypersimplices.
Section \ref{section-upper} proves the upper-cell formula, classifies the contributing cells, and establishes Theorem \ref{thm-structural}.
Section \ref{section-inequalities} proves Theorem \ref{thm-main} and Corollary \ref{cor-power}, and derives quantitative growth bounds from the ratio inequality.
Section \ref{section-sequences} treats consecutive mixed-volume sequences.
Sections \ref{section-graphs} and \ref{section-links} give the graph and special-alternating link applications.

\vskip0.5cm
\noindent \textit{Acknowledgements}.
The authors thank Yin Tian for introducing them to this project and for helpful discussions. They acknowledge the use of ChatGPT in exploratory mathematical discussions, for assistance with developing and checking arguments. The authors subsequently revised the exposition and take full responsibility for the mathematical content and the final text.

\section{Mixed volumes and hypersimplices}
\label{section-mixed}

We first recall the definition of mixed-volume in an arbitrary density space, and then apply them to the universal hypersimplices. 
For the general statements in this section only, we use a separate notation $U$ to denote a vector space of dimension $m\geq1$, which later will be either $W$, a root-lattice space $E_n$, or a lower-dimensional space arising from a face.

\subsection{Mixed-volume conventions}
\label{subsection-MV-conventions}

Equip $U$ with the density $|\omega_U|$ of a nonzero alternating $m$-form $\omega_U$ and let $\op{vol}_U$ denote its volume.
A convex body means a nonempty compact convex set, possibly with empty interior.
We use $K_j$ for convex bodies, $Q_j$ for polytopes in the facet formula, and $\mu_n$ for the scalar mixed volume used in the induction below.
For convex bodies $K_1,\dots,K_q\subset U$ and nonnegative $t_j$, their Minkowski combination is
\begin{equation*}
    \sum_{j=1}^q t_j K_j=\left\{\sum_{j=1}^q t_j x_j \mid x_j\in K_j\right\}.
\end{equation*}
The mixed volume $\MV_U(K_1,\dots,K_m)\in\R$ is normalized by $\MV_U(K,\dots,K)=\vol_U(K)$ and characterized by
\begin{equation}\label{eq-polarization}
    \op{vol}_U\left(\sum_{j=1}^q t_j K_j\right)=\sum_{j_1,\dots,j_m=1}^q t_{j_1} \cdots t_{j_m}\MV_U(K_{j_1},\dots,K_{j_m}).
\end{equation}
For $m$ separately indexed summands, the coefficient of $t_1\cdots t_m$ is $m!\MV_U(K_1,\dots,K_m)$.
This remains true when some summands coincide. Mixed volume is nonnegative, symmetric, translation invariant, monotone in each entry, and multilinear for Minkowski addition and nonnegative scaling. See \cite[Chapter 5]{Schneider14}.
We write $K[a]$ for $a$ consecutive copies of $K$ and omit this entry when $a=0$.
The mixed volume is zero if some entry is a single point. To see this, we translate the point to the origin, and the volume polynomial becomes independent of the corresponding parameter.

For $m\geq2$, the Alexandrov--Fenchel inequality takes the form
\begin{equation}\label{eq-AF}
    \MV_U(K_1, K_2, K_3,\dots, K_m)^2\geq\MV_U(K_1, K_1, K_3,\dots,K_m)\MV_U(K_2, K_2, K_3,\dots,K_m).
\end{equation}
The common list $K_3,\dots,K_m$ is empty when $m=2$.
Note that the bodies in \eqref{eq-AF} may have empty interior \cite[Theorem 1.1 and Section 2.1]{SvH19}.
In dimension one, mixed volume has only one entry and is simply the length.

\subsection{Exposed faces and quotient densities}
\label{subsection-facet}

For a polytope $Q\subset U$ and a functional $u\in U^*$, let
\begin{equation*}
    h_Q(u)=\max_{x\in Q}u(x),\qquad Q^u=\{x\in Q \mid u(x)=h_Q(u)\},
\end{equation*}
i.e., $h_Q(u)$ is a scalar support value and $Q^u$ is an exposed face.
Assume $m\geq2$ and $u\neq0$. Choose $z_u\in U$ with $u(z_u)=1$ and define a form on the $(m-1)$-dimensional space $\ker u$ by
\begin{equation}
    \label{eq-quotient-density}
    \omega_{U,u}(x_1,\dots,x_{m-1})\coloneqq\omega_U(x_1,\dots,x_{m-1},z_u).
\end{equation}

Note that $\omega_{U,u}$ does not depend on the choice of $z_u$: replacing $z_u$ by another such vector changes the last entry by a vector in $\ker u$, so the added alternating-form value is zero. Therefore, the density $|\omega_{U,u}|$ is well defined.
Let $\op{vol}_{U,u}$ and $\MV_{U,u}$ be the volume and mixed volume on $\ker u$ with this density. 
Each face $Q^u$ lies in an affine hyperplane parallel to $\ker u$, and we translate it into $\ker u$ before computing $\MV_{U,u}$.
Translation invariance makes the chosen translations irrelevant.
If $u$ is replaced by $bu$, $b>0$, then the quotient density is divided by $b$, while $h_Q(bu)=b h_Q(u)$, hence their product is unchanged.

For the facet formula, let $Q_1,\dots,Q_m$ be full-dimensional polytopes in $U$.
Choose a complete polyhedral fan refining their normal fans and one nonzero functional $u$ on each of its rays.
The polytope facet formula, with the quotient density just defined, is
\begin{equation}
    \label{eq-facet}
    m\MV_U(Q_1,\dots,Q_m)=\sum_u h_{Q_m}(u) \MV_{U,u}(Q_1^u,\dots,Q_{m-1}^u).
\end{equation}
Note that the additional rays introduced by refinement are harmless: if the sum of the corresponding faces has dimension less than $m-1$, the mixed-volume contribution is zero.

We recall the polarization argument to fix the factor $m$ in \eqref{eq-facet}.
For $t_1,\dots,t_{m-1}>0$, denote $Q(t)=\sum_{j=1}^{m-1}t_j Q_j$.
The polytopal first-variation formula for volume is
\begin{equation}\label{eq-first-variation}
    \left.\frac{d}{ds}\right|_{s=0+} \op{vol}_U \left(Q(t)+ s Q_m\right)=\sum_u h_{Q_m}(u)\op{vol}_{U,u} \left(Q(t)^u\right).
\end{equation}
See \cite[Section 5.1]{Schneider14} for details.
With the volume form \eqref{eq-quotient-density}, the product on the right-hand side is the support displacement times the corresponding facet volume, so no Euclidean unit normal factor is missed.
Since $Q(t)^u=\sum_{j=1}^{m-1} t_j Q_j^u$, both sides of \eqref{eq-first-variation} are polynomials in $t$.
The coefficient of $t_1\cdots t_{m-1}$ on the left-hand side is $m!\MV_U(Q_1,\dots,Q_m)$, whereas that coefficient on the right-hand side is $(m-1)!$ times the sum in \eqref{eq-facet}.
Dividing by $(m-1)!$ then proves \eqref{eq-facet}.
We will not use this formula in dimension one, so no zero-dimensional mixed-volume convention is needed.

\subsection{Three hypersimplex constants}
\label{subsection-hypersimplex-constants}

For each $n\geq1$, let
\begin{equation*}
    E_n=\left\{x\in\R^{n+1} \mid \sum_{j=1}^{n+1}x_j=0\right\},\qquad \Lambda_n = E_n\cap\Z^{n+1},
\end{equation*}
and give $E_n$ the volume density for which the root lattice $\Lambda_n$ has covolume one.
These are universal $n$-dimensional density spaces, separate from $W$, In applications both spaces have dimension $n=d-1$.
Define the centered hypersimplices
\begin{equation}\label{eq-hypersimplices}
    D_k^{(n)}=\left\{x\in[0,1]^{n+1} \mid \sum_jx_j=k\right\}-\frac{k}{n+1}\one,\qquad 1\leq k\leq n,
\end{equation}
which are of full dimension in $E_n$.

\begin{lemma}
    \label{lemma-constants}
    For $n\geq1$,
    \begin{equation}\label{eq-constant-main}
        \MV_{E_n}(D_1^{(n)}, D_2^{(n)},\dots,D_n^{(n)})=1.
    \end{equation}
    For $n\geq2$,
    \begin{gather}
        \MV_{E_n}(D_1^{(n)}[2], D_2^{(n)},\dots, D_{n-1}^{(n)})=\frac{1}{n}, \label{eq-constant-left}\\
        \MV_{E_n}(D_2^{(n)}[2], D_3^{(n)},\dots, D_n^{(n)})=\frac{n-1}{n}. \label{eq-constant-right}
    \end{gather}
\end{lemma}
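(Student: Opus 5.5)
The plan is to reduce all three identities to Postnikov's mixed Eulerian numbers, and to keep a self-contained induction via the facet formula \eqref{eq-facet} as a fallback.

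\emph{Normalization.} For the root lattice $\Lambda_n$ with covolume one, the standard simplex $D_1^{(n)}$ has volume $1/n!$. Postnikov's mixed Eulerian numbers $A_{c_1,\dots,c_n}$ are defined by expanding $\vol(u_1\Delta_{1,n+1}+\cdots+u_n\Delta_{n,n+1})$ in the normalization where that simplex has volume $1/n!$. With this convention one has
\[
\MV_{E_n}\bigl(D_1^{(n)}[c_1],\dots,D_n^{(n)}[c_n]\bigr)=\frac{A_{c_1,\dots,c_n}}{n!}.
\]
Centering by $\frac{k}{n+1}\one$ is harmless by translation invariance. So the first step is to check this dictionary carefully, including the multinomial factors coming from \eqref{eq-polarization}.

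\emph{Evaluation.} I then invoke two facts about the $A_c$.

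First, whenever $c_1+\cdots+c_i\ge i$ for all $i$,
\[
A_c=1^{c_1}2^{c_2}\cdots n^{c_n}.
\]
Second, the symmetry $A_{c_1,\dots,c_n}=A_{c_n,\dots,c_1}$ holds. It is induced by $x\mapsto \one-x$, which maps $D_k$ to $-D_{n+1-k}$ and preserves $\Lambda_n$ and its volume.

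These give the three values:
\begin{itemize}
\item For \eqref{eq-constant-main}, take $c=(1,\dots,1)$. Then $A_c=n!$, so the mixed volume is $1$.
\item For \eqref{eq-constant-left}, take $c=(2,1,\dots,1,0)$. It satisfies the prefix condition, so $A_c=(n-1)!$ and the mixed volume is $1/n$.
\item For \eqref{eq-constant-right}, take $c=(0,2,1,\dots,1)$. It fails the prefix condition, so first reverse it to $(1,\dots,1,2,0)$. Then
\[
A_c=(n-2)!\,(n-1)^2=(n-1)!\,(n-1),
\]
which gives $(n-1)/n$.
\end{itemize}

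\emph{Fallback induction.} If a self-contained argument is preferred, I would prove the three identities jointly by induction on $n$ using \eqref{eq-facet}. The steps are as follows.
\begin{itemize}
\item Put the simplex $D_1^{(n)}$ (or $D_n^{(n)}$) in the last slot, since its support function is explicit.
\item Refine the normal fans by the braid fan, whose rays are the functionals $u_S=\one_S$ for $\emptyset\ne S\subsetneq[n+1]$.
\item The face $(D_k^{(n)})^{u_S}$ is a translate of a product $\Delta_{k',S}\times\Delta_{k-k',S^c}$ of smaller hypersimplices. The mixed volume of $n-1$ such products vanishes unless the factor dimensions split compatibly, by a dimension count, and otherwise it factors into lower-dimensional hypersimplex mixed volumes.
\item For a simplex in the last slot, only the rays with $|S|=1$ or $|S|=n$ survive, and the faces are $D_{k-1}^{(n-1)}$ and $D_k^{(n-1)}$.
\item Track the quotient density \eqref{eq-quotient-density} on $\ker u_S$ relative to $\Lambda_{n-1}$; it should have index one. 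The support values are then $h=1$ minus the centering shift, and the sum should telescope to the claimed constants, with the other two identities feeding the induction.
\end{itemize}

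\emph{Main obstacle.} I expect the difficulty to be bookkeeping rather than ideas. In the citation route, it is matching Postnikov's normalization exactly. In the inductive route, it is checking which product faces contribute and that the quotient lattice densities agree with the $E_{n-1}$ normalization.
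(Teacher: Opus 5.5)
Your proposal is correct, and its main route differs from the paper's. You outsource the computation to Postnikov's Theorem 16.3. With the dictionary $\MV_{E_n}(D_1^{(n)}[c_1],\dots,D_n^{(n)}[c_n])=A_c/n!$, which is the same normalization the paper uses for $\ME$, the prefix-condition product formula and reversal symmetry give $A_c=n!$, $(n-1)!$, and $(n-1)!(n-1)$. Your prefix checks are right, including the case $n=2$.

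The paper instead proves the three values directly from the facet formula over the braid fan. The key observation is that $D_s^{(n)}$ has a point face at $u_S$ when $|S|=s$. The paper puts $D_n^{(n)}$, $D_1^{(n)}$, $D_2^{(n)}$ respectively in the last slot, so the point faces of the first $n-1$ entries kill every ray class except $|S|=n$, $|S|=n$, and $|S|=1$. The $e_p-e_q$ argument shows that the quotient density on $\ker u_S$ is the root-lattice density of $E_{n-1}$. Each identity then reduces to $n+1$ times a single support value times $\mu_{n-1}=1$.

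Your route is shorter and yields every prefix-condition value at once. However, it rests on a nontrivial external theorem and on matching Postnikov's covolume-one normalization. The paper's route is elementary and fixes that normalization internally.

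Your fallback is essentially the paper's argument, with one difference. If you place the simplex $D_n^{(n)}$ last in the third identity, both $|S|=1$ and $|S|=n$ survive, and you need the second and third identities at $n-1$ in a joint induction. That induction does close: $n\MV_{E_n}(D_2^{(n)}[2],D_3^{(n)},\dots,D_n^{(n)})=\frac{1}{n-1}+\frac{n(n-2)}{n-1}=n-1$. The paper avoids it by putting $D_2^{(n)}$ last, which leaves only the singletons.

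There are two small slips. First, the vanishing of the other ray classes comes from point faces of the remaining entries, not from having a simplex in the last slot; for other lists, intermediate $|S|$ can contribute. Second, the reflection you want is $x\mapsto -x$, which sends $D_k^{(n)}$ onto $D_{n+1-k}^{(n)}=-D_k^{(n)}$. The map $x\mapsto\one-x$ only does this for the uncentered hypersimplices.
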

\begin{proof}
    First we describe the exposed faces and their volume density.
    We then compute the three mixed volumes.
    The braid fan whose maximal cones correspond to the order of the coordinates refines the normal fans of all the hypersimplices.
    Its rays are represented by
    \begin{equation*}
        u_S(x) = \sum_{j\in S} x_j, \qquad \varnothing \neq S \subsetneq [n+1].
    \end{equation*}
    
    Note that
    \begin{equation}\label{eq-hypersimplex-support}
        h_{D_k^{(n)}}(u_S)= \op{min}(k,s)-\frac{ks}{n+1},
    \end{equation}
    where $s=|S|$. Indeed, to maximize $u_S$ in the uncentered hypersimplex, we place as much of the total coordinate sum $k$ as possible in $S$.
    If $k<s$, all coordinates outside $S$ are zero and the coordinates in $S$ form a hypersimplex of coordinate sum $k$.
    If $k=s$, the maximizing point is $\one_S$.
    If $k>s$, every coordinate in $S$ is one and the complementary coordinates form a hypersimplex of coordinate sum $k-s$.
    Centering translates these faces such that
    \begin{equation}\label{eq-point-face}
        (D_s^{(n)})^{u_S}=\{\one_S-\frac{s}{n+1}\one\}.
    \end{equation}

    The quotient density on $\ker u_S\subset E_n$ gives covolume one to $\Lambda_n\cap\ker u_S$.
    To check this, choose $p\in S$ and $q\notin S$.
    Then $u_S(e_p-e_q)=1$, so adjoining $e_p-e_q$ to a lattice basis of $\Lambda_n\cap \ker u_S$ gives a lattice basis of $\Lambda_n$.
    This is exactly the normalization in \eqref{eq-quotient-density}.
    For $s=n$, the kernel is the zero sum coordinate space on $S$. 
    For $s=1$, it is the zero sum coordinate space on $S^c$.
    In both cases a coordinate relabeling identifies it with $E_{n-1}$ and preserves the root-lattice density.
    Thus the lower-dimensional face mixed volumes in the two surviving cases below are computed using precisely $\MV_{E_{n-1}}$.

    Denote
    \begin{equation*}
        \mu_n = \MV_{E_n}(D_1^{(n)},\dots,D_n^{(n)}).
    \end{equation*}
    The segment $D_1^{(1)}$ has endpoints whose difference is the primitive lattice vector $e_1-e_2$, hence $\mu_1=1$.
    For $n\geq2$, apply \eqref{eq-facet} in $U=E_n$ with the first $n-1$ entries $D_1^{(n)},\dots,D_{n-1}^{(n)}$ and the last entry $D_n^{(n)}$.
    For $1\leq s\leq n-1$, the face list contains the point \eqref{eq-point-face}, so its $(n-1)$-dimensional mixed volume vanishes.
    For $s=n$, the first $n-1$ faces are translations of $D_1^{(n-1)},\dots,D_{n-1}^{(n-1)}$ in the zero sum coordinates on $S$.
    Their mixed volume is $\mu_{n-1}$ by the density computation above.
    There are $n+1$ subsets of size $n$, and each has support value $n/(n+1)$ for the last entry.
    Therefore,
    \begin{equation*}
        n\mu_n=(n+1)\frac{n}{n+1}\mu_{n-1}=n\mu_{n-1},
    \end{equation*}
    which implies $\mu_n=\mu_{n-1}=\dots=\mu_1=1$ and proves \eqref{eq-constant-main}.

    For \eqref{eq-constant-left}, keep the same first $n-1$ entries and replace the last entry by $D_1^{(n)}$.
    The same point face argument leaves only $s=n$.
    The quotient face mixed volume remains $\mu_{n-1}=1$ and the last support value is now $1/(n+1)$.
    Therefore,
    \begin{equation*}
        n\MV_{E_n}(D_1^{(n)}[2], D_2^{(n)},\dots,D_{n-1}^{(n)})=(n+1)\frac{1}{n+1}\mu_{n-1}=1.    
    \end{equation*}
    For \eqref{eq-constant-right}, use $D_2^{(n)},\dots,D_n^{(n)}$ as the first $n-1$ entries and $D_2^{(n)}$ as the last entry.
    Each term with $2\leq s\leq n$ contains the point face of $D_s^{(n)}$ and vanishes.
    For $s=1$, the first $n-1$ faces are translations of $D_1^{(n-1)},\dots,D_{n-1}^{(n-1)}$ on the complementary coordinates.
    Their quotient mixed volume is again $\mu_{n-1}=1$.
    There are $n+1$ singleton subsets, and the support value of the last entry is $(n-1)/(n+1)$.
    Consequently,
    \begin{equation*}
        n\MV_{E_n}(D_2^{(n)}[2],D_3^{(n)},\dots,D_n^{(n)})=(n+1)\frac{n-1}{n+1}\mu_{n-1}=n-1.
    \end{equation*}
\end{proof}

The map $x\mapsto -x$ preserves the volume density on $E_n$ and sends $D_k^{(n)}$ to $D_{n+1-k}^{(n)}$.
We will also use the reflected forms of \eqref{eq-constant-left}--\eqref{eq-constant-right}.
These values agree with the corresponding mixed Eulerian numbers in \cite[Definition 16.1 and Theorem 16.3]{Postnikov09}.

\section{Projected hypersimplices}\label{section-projected}

Consider the flat arrangement \eqref{eq-arrangement} of Section \ref{section-introduction} and assume $d\geq2$, where $V$ has dimension $d$, $W=\ker\varphi$ has dimension $n=d-1$, and $N=r+n+1$.
Choose $v\in H$ and write $b_j=a_j-v\in W$.
The form $\omega_V$ measures the determinant weights in $V$, while the form $\omega_W$ in \eqref{eq-density-intro} measures volumes in $W$.

\begin{lemma}\label{lemma-density}
    The form $\omega_W$ in \eqref{eq-density-intro} is independent of $v\in H$.
\end{lemma}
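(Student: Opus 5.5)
The plan is to compare the forms obtained from two points $v,v'\in H$ and use multilinearity and the alternating property of $\omega_V$. Since both points lie in the affine hyperplane $H=\varphi^{-1}(1)$, we have $\varphi(v'-v)=0$. Hence $v'=v+w_0$ for some $w_0\in W=\ker\varphi$.

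For $w_1,\ldots,w_n\in W$, expand linearly in the last slot:
\begin{equation*}
    \omega_V(w_1,\ldots,w_n,v')=\omega_V(w_1,\ldots,w_n,v)+\omega_V(w_1,\ldots,w_n,w_0).
\end{equation*}
The second term vanishes because its $d=n+1$ arguments $w_1,\ldots,w_n,w_0$ all lie in the $n$-dimensional space $W$. They are therefore linearly dependent, and any alternating $d$-form vanishes on linearly dependent $d$-tuples. So the forms built from $v$ and from $v'$ coincide.

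This is the same argument already used in Subsection \ref{subsection-facet} to show that $\omega_{U,u}$ is independent of $z_u$. In fact, $\omega_W$ is exactly $\omega_{V,\varphi}$ with $z_\varphi=v$, since $\varphi(v)=1$. One may therefore simply cite that remark.

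It is also worth recording that $\omega_W$ is nonzero, since the later volume computations need a genuine density. To see this, take any basis $w_1,\ldots,w_n$ of $W$. Since $v\notin W$, the list $w_1,\ldots,w_n,v$ is a basis of $V$, and the nonzero top form $\omega_V$ takes a nonzero value on it.

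There is no real obstacle here. The only point needing care is the dimension count showing that $d$ vectors in $W$ are dependent.
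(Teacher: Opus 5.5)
Your proof is correct and is the paper's argument: write the new base point as $v+w_0$ with $w_0\in W$, expand in the last slot, and note that the extra term vanishes because all $d=n+1$ entries lie in the $n$-dimensional space $W$. Your two side remarks are also correct: $\omega_W$ is the quotient form $\omega_{V,\varphi}$ with $z_\varphi=v$, and $\omega_W$ is nonzero.
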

\begin{proof}
    Replacing $v$ by $v+w$, $w\in W$, adds a term $\omega_V(w_1,\dots,w_n,w)$, which vanishes because all $n+1=d$ entries lie in the $n$-dimensional space $W$.
\end{proof}

All mixed volumes of projected hypersimplices are denoted by $\MV_W$ and computed with the density $|\omega_W|$.
For convenience, we also define the two endpoint polytopes:
\begin{equation}\label{eq-Pk}
    P_k=\left\{\sum_{j=1}^N x_jb_j\mid 0\leq x_j\leq1,\, \sum_jx_j=k\right\},\qquad 0\leq k\leq N.
\end{equation}
For integer $k$, the vertices of the hypersimplex in the source are the incidence vectors of $k$-subsets, so \eqref{eq-Pk} agrees with \eqref{eq-Pk-intro}.
In particular, $P_0=\{0\}$ and $P_N=\{b_{[N]}\}$, where $b_I=\sum_{j\in I} b_j$.
The flatness gives an equivalent slice description
\begin{equation}\label{eq-zonotope-slice}
    P_k=(A([0,1]^N)\cap\varphi^{-1}(k))-kv.
\end{equation}
Indeed, $\varphi(Ax)=\sum_j x_j$ for every $x$.
Changing $v$ translates each $P_k$ and therefore leaves all its mixed volumes unchanged.

\begin{lemma}\label{lemma-projected-properties}
    For $1\leq k\leq N-1$, the polytope $P_k$ has nonempty interior in $W$.
    Moreover,
    \begin{equation}\label{eq-reflection}
        P_{N-k}=b_{[N]}-P_k, \qquad 0\leq k\leq N,
    \end{equation}
    and
    \begin{equation}\label{eq-midpoint-inclusion}
        \frac{1}{2}(P_{k-1}+P_{k+1})\subseteq P_k, \qquad 1\leq k\leq N-1.
    \end{equation}
\end{lemma}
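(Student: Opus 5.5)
All three claims are proved by working in the source space $\R^N$ through the linear map $x\mapsto\sum_j x_jb_j$. Write $\Delta_k=\{x\in[0,1]^N\mid\sum_jx_j=k\}$ for the (possibly degenerate) hypersimplex, so that $P_k=B(\Delta_k)$ with $B=[b_1|\cdots|b_N]\colon\R^N\to W$. Since $B$ is linear, it commutes with Minkowski sums, scalar multiples and reflections, so the reflection and midpoint properties reduce to statements about $\Delta_k$. Only the interior claim uses the spanning hypothesis.

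For \eqref{eq-reflection}, use the involution $x\mapsto\one-x$ of $[0,1]^N$. It sends $\Delta_k$ bijectively onto $\Delta_{N-k}$, because the coordinate sum changes from $k$ to $N-k$. Applying $B$ gives $P_{N-k}=B(\one)-P_k=b_{[N]}-P_k$, and the endpoint cases $k=0,N$ are included.

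For \eqref{eq-midpoint-inclusion}, take $y\in\Delta_{k-1}$ and $z\in\Delta_{k+1}$. The midpoint $\tfrac12(y+z)$ lies in $[0,1]^N$ by convexity of the cube, and its coordinate sum is $\tfrac12\bigl((k-1)+(k+1)\bigr)=k$. Hence $\tfrac12(\Delta_{k-1}+\Delta_{k+1})\subseteq\Delta_k$, and applying $B$ (linearity) gives $\tfrac12(P_{k-1}+P_{k+1})\subseteq P_k$.

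For the interior claim, fix $1\leq k\leq N-1$. Choose the interior point $x^0=\tfrac kN\one$ of $\Delta_k$; all its coordinates lie in $(0,1)$. For every $j\neq l$, the perturbations $x^0\pm\epsilon(e_j-e_l)$ stay in $\Delta_k$ for small $\epsilon>0$. Therefore $P_k$ contains a neighborhood of $Bx^0$ inside the affine span of $Bx^0+\op{span}\{b_j-b_l\}$.

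It remains to show that the differences $b_j-b_l=a_j-a_l$ span $W$. This is the one real step. Since the $a_j$ span $V$, we have $\dim\op{span}\{a_j-a_1\}\geq d-1$. Indeed, $V=\op{span}\{a_1\}+\op{span}\{a_j-a_1\}$. Also, each difference $a_j-a_1$ lies in $W=\ker\varphi$, because $\varphi(a_j)=\varphi(a_1)=1$. As $\dim W=d-1$, the differences span $W$.

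So $P_k$ contains an open ball of $W$ around $Bx^0$, i.e. it has nonempty interior. The main point to get right is this spanning argument; the rest is formal.
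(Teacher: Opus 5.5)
Your proof is correct and follows the paper's argument. You use complementation $x\mapsto\one-x$ for the reflection, averaging inside the cube for the midpoint inclusion, and the interior point $\tfrac kN\one$ pushed forward by a linear map that sends the zero-sum subspace onto $W$; the only cosmetic difference is that you check $\op{span}\{b_j-b_l\}=W$ by a dimension count, whereas the paper observes that any $A$-preimage of $w\in W$ has coordinate sum $\varphi(w)=0$.
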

\begin{proof}
    The restriction of $A$ to the zero-sum subspace of $\R^N$ maps onto $W$: if $Ax=w\in W$, then $\sum_j x_j=\varphi(Ax)=0$.
    The point $(k/N)\one$ lies in the relative interior of the source hypersimplex for $1\leq k\leq N-1$.
    A surjective linear map sends a relative neighborhood of this point to a neighborhood in $W$, proving full dimensionality.
    Complementing the source coordinates, $x\mapsto\one-x$, proves \eqref{eq-reflection}.
    Averaging a source point of coordinate sum $k-1$ and one of sum $k+1$ gives a point in $[0,1]^N$ with coordinate sum $k$, which proves \eqref{eq-midpoint-inclusion}.
\end{proof}

For a weak composition $q=(q_1,\dots,q_n)$ of $n$, define
\begin{equation}\label{eq-ME}
    \ME(q)=n!\MV_{E_n}(D_1^{(n)}[q_1],\dots,D_n^{(n)}[q_n]).
\end{equation}
An entry $q_k=0$ means that $D_k^{(n)}$ is omitted.
Note that this mixed volume is computed in the root-lattice density space $E_n$ instead of $W$.
It has $n$ entries, counted with multiplicity, and follows the normalization of \cite[Definition 16.1]{Postnikov09}. The numbers $\ME(q)$ are positive integers.

\section{The upper-envelope formula}\label{section-upper}

This section proves the structural theorem, Theorem \ref{thm-structural}.
We first obtain a parameter-independent subdivision directly from lifted Minkowski sums, then identify its nonzero mixed-volume contributions for consecutive projected hypersimplices.
Throughout this section, $W$ has dimension $n=d-1\geq1$.
The lifts lie in $W\oplus\R$, which is of dimension $n+1=d$, but all volumes in the upper-cell formula are computed after projection to $W$.

\subsection{Additivity over upper cells}

For a polytope $\widehat{K}\subset W\oplus\R$, we write $K=\op{pr}_W(\widehat{K})$. The $\R$-coordinate denotes height.
An \emph{upper face} is exposed by a functional whose coefficient on height is positive. After normalization, it has the form $(u,1)$ with $u\in W^*$.
We allow the entire polytope to be an exposed face when the exposing functional is constant on it.
We use the notation
\begin{equation*}
    \widehat{F}_K(u)=\widehat{K}^{(u,1)},\qquad F_K(u)=\op{pr}_W (\widehat{F}_K(u)).
\end{equation*}
Here $F_K(u)$ is a projected lifted face and need not be an exposed face of $K$.
This notation is distinct from the ordinary exposed face notation $Q^u$ in Section \ref{subsection-facet}.
For a nonempty polytope $F$, we define $\op{dir} F=\op{span}(F-F)$.

\begin{lemma}\label{lemma-upper-cells}
    Let $\widehat{K}_1,\dots,\widehat{K}_n\subset W\oplus\R$ be nonempty polytopes such that $K_1+\dots+K_n$ has nonempty interior.
    There is a finite set $\mathcal{C}$ of tuples $F=(F_1,\dots,F_n)$ of projected faces selected by a common upper normal, independent of $\lambda\in\R_{>0}^n$, for which
    \begin{equation*}
        C_F(\lambda)=\lambda_1 F_1+\dots+\lambda_n F_n\qquad (F\in\mathcal{C})
    \end{equation*}
    are exactly the distinct full-dimensional projected upper faces of $\sum_j\lambda_j\widehat{K}_j$, which cover $\sum_j\lambda_j K_j$ and have pairwise disjoint interiors.
    Moreover,
    \begin{equation}\label{eq-upper-additivity}
        \MV_W(K_1,\dots,K_n)=\sum_{F\in\mathcal{C}} \MV_W(F_1,\dots,F_n).
    \end{equation}
    We treat coincident lifted polytopes as separately indexed entries.
\end{lemma}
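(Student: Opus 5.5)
The plan is to use the structure of upper faces of Minkowski sums and then polarize. The key fact is that exposed faces of Minkowski sums decompose: for any functional $(u,1)$,
\begin{equation*}
    \Bigl(\sum_j\lambda_j\widehat{K}_j\Bigr)^{(u,1)}=\sum_j\lambda_j\widehat{F}_{K_j}(u).
\end{equation*}
For $\lambda_j>0$, the exposed face in direction $(u,1)$ depends only on the tuple of faces $(\widehat F_{K_1}(u),\dots,\widehat F_{K_n}(u))$. That tuple is constant on the relatively open cones of the common refinement of the normal fans of the $\widehat K_j$, and this refinement is independent of $\lambda$.

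\textbf{Step 1: define $\mathcal{C}$.} Let $\mathcal{C}$ be the set of tuples $F=(F_{K_1}(u),\dots,F_{K_n}(u))$, over all $u\in W^*$, for which $\sum_j F_j$ is full-dimensional in $W$. This set is finite and does not depend on $\lambda$.

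For fixed $\lambda$, I will check that the $C_F(\lambda)$ are exactly the full-dimensional projected upper faces. A projected upper face is full-dimensional in $W$ iff $\dim\sum_j\lambda_jF_j=n$, which is the same condition as $\dim\sum_jF_j=n$ because $\lambda_j>0$. Distinct tuples in $\mathcal{C}$ give distinct cells. Indeed, a full-dimensional lifted upper face is the graph of an affine function on its projection, so it determines $(u,1)$ uniquely, and hence determines the tuple.

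\textbf{Step 2: covering and disjoint interiors.} This is the standard upper-envelope argument. Let $\widehat K=\sum_j\lambda_j\widehat K_j$ and $g(x)=\max\{h:(x,h)\in\widehat K\}$ on $K=\sum_j\lambda_jK_j$. The function $g$ is concave and piecewise affine, and its domains of linearity are exactly the projections of the upper facets (the non-vertical facets).
\begin{itemize}
    \item Every point of $K$ lies in the closure of the union of these domains, and each domain is closed, so the $C_F(\lambda)$ cover $K$.
    \item Two distinct upper facets meet only in a lower-dimensional face, so their projections have disjoint interiors.
\end{itemize}
Interior points where the upper boundary is vertical form a null set and need no separate treatment. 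Since $K$ has nonempty interior (from the hypothesis on $K_1+\dots+K_n$), there are no degenerate cases.

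\textbf{Step 3: additivity.} Volume is additive over a subdivision into cells with disjoint interiors, so
\begin{equation*}
    \vol_W\Bigl(\sum_j\lambda_jK_j\Bigr)=\sum_{F\in\mathcal{C}}\vol_W\Bigl(\sum_j\lambda_jF_j\Bigr)\qquad(\lambda\in\R^n_{>0}).
\end{equation*}
I will then expand both sides by \eqref{eq-polarization}, treating each $\widehat K_j$ as a separately indexed entry, so that coincident bodies cause no trouble. Both sides are polynomials in $\lambda$ that agree on an open set, hence agree identically. Comparing the coefficients of $\lambda_1\cdots\lambda_n$ and dividing by $n!$ gives \eqref{eq-upper-additivity}.

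The main obstacle is Step 2: proving that the projected upper facets tile $K$ with disjoint interiors while keeping careful track of dimensions in the projection. To handle it, I would describe the upper facets through the concave function $g$ and use that $g$ is affine exactly on the projections of the upper facets, rather than arguing with normal fans directly.
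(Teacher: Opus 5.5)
Your proposal follows the paper's proof essentially step for step. Face additivity for Minkowski sums gives a $\lambda$-independent index set $\mathcal{C}$ cut out by full-dimensionality. The roof function $g$ gives the tiling. The affine roof over a full-dimensional cell determines the upper normal and hence the tuple. Polarization with separately indexed parameters then finishes the argument.

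There is, however, one false claim in Step 2, and it leaves a case uncovered. Full-dimensionality of $K=\sum_j\lambda_jK_j$ only forces $\dim\widehat{K}\in\{n,n+1\}$, where $\widehat{K}=\sum_j\lambda_j\widehat{K}_j$. It does not rule out the degenerate case $\dim\widehat{K}=n$. Already for $n=1$ and $\widehat{K}_1=\op{conv}\{(0,0),(1,1)\}$, the projection $K_1=[0,1]$ is full-dimensional, but the facets of the segment $\widehat{K}_1$ are its endpoints. Here $g$ is affine on all of $K_1$, yet no facet projects onto a full-dimensional set, so your identification of the linearity domains of $g$ with projections of non-vertical facets fails.

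This is not an artificial case. In the paper's application with $r=0$, Lemma~\ref{lemma-basis-tie} gives $\rho_I=kq_B-u_B(b_I)$ for every $k$-subset $I$, so every $\widehat{P}_k$ lies in a non-vertical hyperplane. The repair is the paper's second case. If $\dim\widehat{K}=n$, then $\widehat{K}$ is the graph of an affine function over $K$. It is therefore itself the upper face exposed by the functional $(u,1)$ that is constant on it, and $\mathcal{C}$ has exactly one element. Equivalently, you can phrase Step 2 in terms of upper faces with $n$-dimensional projection rather than facets.

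In the nondegenerate case, the paper also proves the piecewise-affine structure you assert. It shows $g(x)=\min_\nu(\eta_\nu-u_\nu(x))$ over the upper-facet inequalities, by raising any feasible height until one of them becomes tight. This covers all of $K$, boundary included, without your density-and-closure step. It also makes your remark about a vertical upper boundary unnecessary, since vertical facets project into the boundary of $K$.
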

\begin{proof}
    For positive $\lambda_j$, let $\widehat{K}(\lambda)=\sum_j\lambda_j \widehat{K}_j$ and $K(\lambda)=\sum_j \lambda_j K_j$.
    The exposed faces of Minkowski sums satisfy
    \begin{equation}\label{eq-face-additivity}
        \widehat{K}(\lambda)^{(u,1)}=\sum_j\lambda_j\widehat{K}_j^{(u,1)}.
    \end{equation}
    Therefore, the tuple $F_j=F_{K_j}(u)$ selected by $u$ is independent of $\lambda$.
    Its projected dimension is independent of $\lambda$ as well since
    \begin{equation}\label{eq-directions}
        \op{dir} C_F(\lambda)=\sum_j\op{dir} F_j,\qquad \lambda_j>0.
    \end{equation}
    To verify both inclusions, note that $0\in F_j-F_j$ for every $j$ and that
    \begin{equation*}
        C_F(\lambda)-C_F(\lambda)=\sum_j\lambda_j(F_j-F_j).
    \end{equation*}
    For each $i$, this gives
    \begin{equation*}
        \lambda_i(F_i-F_i)\subseteq C_F(\lambda)-C_F(\lambda)\subseteq\sum_j\op{dir} F_j.
    \end{equation*}
    Taking linear spans in the second inclusion proves $\op{dir} C_F(\lambda)\subseteq\sum_j\op{dir} F_j$.
    Taking spans in the first and using $\lambda_i>0$ gives $\op{dir} F_i\subseteq\op{dir} C_F(\lambda)$ for every $i$, which proves the reverse inclusion.
    The same identity applied to the $K_j$ shows that $K(\lambda)$ is full-dimensional for every positive $\lambda$.
    There are only finitely many face tuples, so let $\mathcal{C}$ consist of those selected by an upper normal for which $\sum_j\op{dir} F_j=W$.
    Equations \eqref{eq-face-additivity}--\eqref{eq-directions} show that this set indexes the full-dimensional projected upper faces for every positive $\lambda$.

    We next show that these projected faces cover $K(\lambda)$ and have disjoint interiors, so that we can apply volume additivity.
    For this purpose, we define the upper roof
    \begin{equation*}
        g_\lambda(x)=\max\{z\mid (x,z)\in\widehat{K}(\lambda)\},\qquad x\in K(\lambda).
    \end{equation*}
    The maximum exists because every fiber is nonempty and compact.
    For any $u\in W^*$, the supporting inequality for $\widehat{K}(\lambda)$ gives
    \begin{equation*}
        g_\lambda(x)\leq h_{\widehat{K}(\lambda)}(u,1)-u(x).
    \end{equation*}
    Equality holds exactly when the roof point $(x,g_\lambda(x))$ lies in the exposed upper face.
    Consequently,
    \begin{equation}\label{eq-roof-contact}
        \op{pr}_W(\widehat{K}(\lambda)^{(u,1)}) = \{x\in K(\lambda)\mid g_\lambda(x)=h_{\widehat{K}(\lambda)}(u,1)-u(x)\}.
    \end{equation}
    Therefore, the projected upper faces are exactly the contact sets of the roof with these affine supporting functions.

    Since $K(\lambda)$ has dimension $n$, the lifted polytope $\widehat{K}(\lambda)$ has dimension either $n$ or $n+1$.
    Suppose first that its dimension is $n+1$.
    Write the inequalities of its upper facets as $u_\nu(x) + z\leq\eta_\nu$, with $\nu$ in a finite index set $J$.
    This set is nonempty because the polytope is bounded above.
    For $x\in K(\lambda)$, start with any feasible height $z_0$.
    Raising $z_0$ to $\min_{\nu\in J}(\eta_\nu-u_\nu(x))$ preserves the inequalities whose coefficient on height is nonpositive, and satisfies all the upper-facet inequalities.
    Hence
    \begin{equation}\label{eq-roof-minimum}
        g_\lambda(x)=\min_{\nu\in J}\{\eta_\nu-u_\nu(x)\},\qquad x\in K(\lambda).
    \end{equation}
    In particular, $g_\lambda$ is concave and piecewise affine.
    Each contact set where one of the affine functions in \eqref{eq-roof-minimum} attains the minimum is the projection of the corresponding upper facet by \eqref{eq-roof-contact}.
    Projection is an affine isomorphism on the hyperplane $u_\nu(x)+z=\eta_\nu$, so each such contact set has dimension $n$. They cover $K(\lambda)$, including its boundary, since a finite minimum is attained at every point.
    If two of their interiors overlap, their affine functions will agree on an open set and hence everywhere, which makes the two facets equal. Their interiors are therefore disjoint.
    These are all the full-dimensional projected upper faces: projection is injective on any upper face, so such a face has dimension $n$ and is a facet.

    If instead $\dim\widehat{K}(\lambda)=n$, then the projection from its affine hull onto $W$ is an affine isomorphism. The lifted polytope is therefore the graph of the affine function $g_\lambda$ on $K(\lambda)$. Its graph is exposed by the upper normal determined by its affine slope.
    The only full-dimensional projected upper face is then $K(\lambda)$ itself.
    These are the only two possible dimensions, so the projected faces cover the polytope and have disjoint interiors.

    We also use the roof to show that distinct tuples give distinct cells.
    If two tuples gave the same full-dimensional projected cell, \eqref{eq-roof-contact} would express $g_\lambda$ there by two affine functions.
    Equality on an open set forces their linear parts and hence their normalized upper normals to agree.
    Each summand has a unique exposed face at that normal, so the two tuples coincide.

    Volume additivity now gives, for every $\lambda\in\R_{>0}^n$,
    \begin{equation}\label{eq-volume-polynomial-additivity}
        \op{vol}_W\left(\sum_j\lambda_j K_j\right)=\sum_{F\in\mathcal{C}}\op{vol}_W\left(\sum_j\lambda_j F_j\right).
    \end{equation}
    Both sides are homogeneous polynomials of degree $n$, and the indexing set $\mathcal{C}$ is fixed throughout the positive orthant. Their coefficients therefore agree.
    By \eqref{eq-polarization} with $U=W$ and $m=n$, the coefficient of $\lambda_1\cdots\lambda_n$ is $n!$ times the corresponding $\MV_W$, which proves \eqref{eq-upper-additivity}.
    Lower-dimensional projected faces have zero $n$-dimensional volume.
    Separate parameters are used even when two summands coincide, so the argument introduces no additional multiplicity factors.
\end{proof}

\subsection{Scores and basis faces}

Fix the circuit-generic vector $\rho$ used in \eqref{eq-activity}.
For $1\leq k\leq N-1$, lift $P_k$ to
\begin{equation}\label{eq-lifted-P}
    \widehat{P}_k=\op{conv}\{(b_I,\rho_I)\mid I\subset[N],\, |I|=k\},\qquad  \rho_I=\sum_{j\in I}\rho_j.
\end{equation}
We assign the height to each labeled subset rather than to its projected image.
In particular, two subsets with the same $b_I$ may have different heights.
For $u\in W^*$, recall that the \emph{score} of label $j$ at $u$ is
\begin{equation}
    \label{eq-scores}
    s_j(u)=\rho_j + u(b_j).
\end{equation}
We use the word ``score'' for this quantity throughout the paper.
Let $F_k(u)=\op{pr}_W(\widehat{P}_k^{(u,1)})$ be the corresponding projected upper face.
For a labeled $k$-subset $I$,
\begin{equation*}
    (u,1)(b_I, \rho_I)=u(b_I)+\rho_I=\sum_{j\in I}(\rho_j+u(b_j))=\sum_{j\in I}s_j(u).
\end{equation*}
Therefore, a maximizing subset consists of $k$ labels with largest scores. If several labels are tied at the last selected value, we may choose any of them.

For fixed $u$, we call an equality class of the values $s_1(u),\dots,s_N(u)$ a \emph{tied score block}.
Order these blocks from highest to lowest score.
For a block $B$, let $T$ be the labels with higher score and let $p=|T|$.
For $p<k<p+|B|$,
\begin{equation}
    \label{eq-score-face}
    F_k(u)=b_T+ \left\{\sum_{j\in B} x_j b_j \mid 0\leq x_j\leq 1,\, \sum_{j\in B}x_j=k-p \right\}.
\end{equation}
At a boundary between blocks, the maximizing subset is unique, so $F_k(u)$ is a point.
We call the integer $k$ the selected cut in the ordered list of scores.

\begin{lemma}\label{lemma-tied-block}
    The columns indexed by any tied score block are linearly independent.
    In particular, a tied block has at most $d=n+1$ labels.
\end{lemma}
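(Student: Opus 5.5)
The plan is to argue by contradiction, using circuit-genericity of $\rho$. Fix $u\in W^*$ and a tied score block $B$, so that $s_j(u)=\sigma$ for every $j\in B$ and some common value $\sigma$. Suppose the columns $a_j$, $j\in B$, are linearly dependent. Then there is a nonzero $z\in\ker A$ supported in $B$. Shrinking the support if necessary, I may take $z$ to be a circuit vector with $\op{supp} z\subseteq B$; this is possible because any dependent set contains a circuit.

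The key step is to compute $\langle\rho,z\rangle$ and show it vanishes. From $Az=0$ and $\varphi(a_j)=1$ we get $\sum_j z_j=\varphi(Az)=0$. Since $b_j=a_j-v$, it follows that
\begin{equation*}
    \sum_j z_jb_j=Az-\Big(\sum_j z_j\Big)v=0.
\end{equation*}
Using $\rho_j=s_j(u)-u(b_j)$, I then obtain
\begin{equation*}
    \langle\rho,z\rangle=\sum_{j\in B}z_j s_j(u)-u\Big(\sum_j z_jb_j\Big)=\sigma\sum_{j\in B}z_j-0=0.
\end{equation*}
This contradicts circuit-genericity of $\rho$, which requires $\langle\rho,z\rangle\neq0$ for every circuit vector $z$. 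Hence the columns indexed by $B$ are linearly independent.

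The consequence is immediate: $|B|$ linearly independent vectors in the $d$-dimensional space $V$ number at most $d=n+1$. The only point that needs care is the identity $\sum_j z_j=0$, which is exactly where flatness enters. It is what makes the tie collapse to the zero pairing, so I expect no real obstacle here; this step is a short computation rather than a difficulty.
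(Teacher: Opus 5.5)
Your proposal is correct and follows essentially the same argument as the paper: a circuit vector $z$ supported in the block satisfies $\sum_j z_j=0$ and $\sum_j z_jb_j=0$ by flatness, so the common score forces $\langle\rho,z\rangle=0$, contradicting circuit genericity. The bound $|B|\leq d$ then follows from linear independence in $V$.
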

\begin{proof}
    Suppose a block is dependent. Then it supports a circuit vector $z$.
    Flatness gives $\sum_j z_j=\varphi(Az)=0$ and $\sum_j z_j b_j=0$.
    If the common score on the block is $q$, then
    \begin{equation*}
        \langle\rho, z\rangle=\sum_j z_j(q-u(b_j))=0,
    \end{equation*}
    which contradicts circuit genericity.
\end{proof}

\begin{lemma}\label{lemma-basis-tie}
    For every basis $B$, there is a unique $(u_B, q_B)\in W^*\times\R$ such that
    \begin{equation}\label{eq-basis-tie}
        \rho_j+u_B(b_j)=q_B, \qquad j\in B.
    \end{equation}
    The block of score $q_B$ consists exactly of $B$, and the labels above it are $\Ext_\rho(B)$.
    The linear map
    \begin{equation}\label{eq-TB}
        T_B\colon E_B=\left\{x\in \R^B \mid \sum_{j\in B} x_j=0\right\}\to W,\qquad T_B(x)=\sum_{j\in B} x_j b_j,
    \end{equation}
    which uses the root-lattice density on $E_B$, is an isomorphism with volume factor $w_A(B)$. Moreover, both $E_B$ and $W$ have dimension $n$.
\end{lemma}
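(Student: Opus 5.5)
Three things need proof: existence and uniqueness of $(u_B,q_B)$; the identification of the block of score $q_B$ with $B$ and of the labels above it with $\Ext_\rho(B)$; and the volume factor of $T_B$.

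\textbf{Existence and uniqueness.} The system \eqref{eq-basis-tie} is linear in $(u,q)\in W^*\times\R$, a space of dimension $n+1=d$, and it has $d$ equations. Identify $W^*\times\R$ with $V^*$ by sending $(u,q)$ to the functional $\psi$ with $\psi|_W=-u$ and $\psi(v)=q$. Then
\[
\psi(a_j)=\psi(b_j)+\psi(v)=q-u(b_j),
\]
so \eqref{eq-basis-tie} becomes $\psi(a_j)=\rho_j$ for $j\in B$. Since $\{a_j\}_{j\in B}$ is a basis of $V$, there is exactly one such $\psi$.

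\textbf{The block and the labels above it.} By construction every label of $B$ has score $q_B$. Lemma \ref{lemma-tied-block} says a tied block has independent columns, so it has at most $d$ labels. The block of score $q_B$ contains the $d$ labels of $B$, hence it is exactly $B$.

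For $j\notin B$, use the fundamental circuit $c=c^{B,j}$ from \eqref{eq-fundamental}. Since $c\in\ker A$, flatness gives $\sum_k c_k=\varphi(Ac)=0$ and $\sum_k c_k b_k=0$. Therefore
\[
\langle\rho,c\rangle=\sum_k c_k\bigl(s_k(u_B)-u_B(b_k)\bigr)=\sum_k c_k s_k(u_B).
\]
The support of $c$ lies in $B\sqcup\{j\}$ and $c_j=1$. The $B$-part contributes $q_B\sum_{k\in B}c_k=-q_B$, so
\[
\langle\rho,c\rangle=s_j(u_B)-q_B.
\]
Hence $j\in\Ext_\rho(B)$ exactly when $s_j(u_B)>q_B$, i.e.\ when $j$ lies above the block. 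Circuit genericity, equivalently the first part of this step, rules out $s_j(u_B)=q_B$.

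\textbf{The map $T_B$.} Both $E_B$ and $W$ have dimension $d-1=n$. To compute the volume factor, fix $p\in B$ and use the lattice basis $e_k-e_p$, $k\in B\setminus\{p\}$, of $E_B\cap\Z^B$, which has covolume one in the root-lattice density. Its image under $T_B$ is $b_k-b_p=a_k-a_p$. By \eqref{eq-density-intro} with base point $a_p\in H$,
\[
\omega_W(a_k-a_p,\ k\neq p)=\omega_V(a_k-a_p,\dots,a_p)=\omega_V(a_k,\ k\in B),
\]
where multilinearity lets us drop the $-a_p$ terms because $a_p$ already appears. Its absolute value is $w_A(B)\neq 0$. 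So $T_B$ is an isomorphism with volume factor $w_A(B)$.

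The only delicate point is the sign bookkeeping in the circuit computation. The flatness identities make it clean.
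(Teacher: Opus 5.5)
Your proof is correct and follows essentially the same route as the paper. Your functional $\psi$ is the paper's $\ell_B=\rho_BA_B^{-1}$, with $q_B=\ell_B(v)$ and $u_B=-\ell_B|_W$; the key identity $s_j(u_B)-q_B=\langle\rho,c^{B,j}\rangle$ is the same; and the volume factor comes from the same lattice basis $e_k-e_p$ together with the base-point independence of $\omega_W$. The only cosmetic differences are that you get uniqueness from the isomorphism $V^*\cong W^*\times\R$ rather than from affine independence of the $b_j$, and you identify the block with $B$ via Lemma \ref{lemma-tied-block} rather than directly from the nonvanishing of $\langle\rho,c^{B,j}\rangle$ for $j\notin B$.
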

\begin{proof}
    The points $b_j$, $j\in B$, form an affine basis of $W$, so \eqref{eq-basis-tie} has at most one solution.
    To construct the solution, we order $\rho_B$ according to the columns of $A_B$ and define
    \begin{equation*}
        \ell_B=\rho_B A_B^{-1}\in V^*,\qquad q_B=\ell_B(v),\qquad u_B=-\ell_B|_W.
    \end{equation*}
    Since $b_j=a_j-v$, for every label $j$ we have
    \begin{equation*}
        \rho_j+u_B(b_j)-q_B=\rho_j-\ell_B(a_j).
    \end{equation*}
    The right-hand side vanishes for $j\in B$ and proves \eqref{eq-basis-tie}.
    For $j\notin B$, it gives
    \begin{equation}\label{eq-activity-sign}
        s_j(u_B)-q_B=\rho_j-\rho_B A_B^{-1}a_j=\langle \rho, c^{B,j}\rangle.
    \end{equation}
    By circuit genericity, this quantity is nonzero. It is positive exactly when $j\in\Ext_\rho(B)$.
    This proves the statements about the tied block and the labels above it.

    Order $B=\{j_1,\dots,j_d\}$ and put
    \begin{equation*}
        \varepsilon_h=e_{j_h}-e_{j_d},\qquad \delta_h=T_B(\varepsilon_h)=a_{j_h}-a_{j_d}, \qquad 1\leq h\leq n.
    \end{equation*}
    The set $\{\varepsilon_h\}$ forms a basis of the root lattice $E_B\cap\Z^B$, and $\{\delta_h\}$ forms a basis of $W$.
    The volume factor of $T_B$ is therefore $|\omega_W(\delta_1,\dots,\delta_n)|$.
    Using the induced density on $W$, we compute
    \begin{align}\label{eq-det-factor}
        |\omega_W(\delta_1,\dots,\delta_n)|
        &=|\omega_V(\delta_1,\dots,\delta_n,v)|\nonumber\\
        &=|\omega_V(\delta_1,\dots,\delta_n,a_{j_d})|\nonumber\\
        &=|\omega_V(a_{j_1},\dots,a_{j_d})|\nonumber\\
        &=w_A(B).
    \end{align}
    The second equality follows from $a_{j_d}-v\in W$. For the third equality, we add the last column to each of the first $n$ columns.
    In particular, polarization of the volume change gives, for convex bodies $L_1,\dots,L_n\subset E_B$,
    \begin{equation}\label{eq-TB-mixed-volume}
        \MV_W(T_B L_1,\dots,T_B L_n)=w_A(B)\MV_{E_B}(L_1,\dots,L_n).
    \end{equation}
\end{proof}

\subsection{Classification of the contributing cells}

The following proposition is the main geometric step and its proof explains why the selected cuts must be consecutive.

\begin{proposition}\label{prop-classification}
    Fix integers $s\geq 1$ and $\ell\geq 1$ with $s+\ell-1\leq N-1$, and positive integers $c_1,\dots,c_\ell$ with sum $n$. Apply Lemma \ref{lemma-upper-cells} to the list containing $c_j$ separately indexed copies of $\widehat{P}_{s+j-1}$. The nonzero summands in \eqref{eq-upper-additivity} are in bijection with the bases $B$ that satisfy
    \begin{equation}\label{eq-contributing-condition}
        0\leq s-\ext_\rho(B)-1\leq n-\ell.
    \end{equation}
    The summand corresponding to $B$ is the face tuple selected by $u_B$ and has value
    \begin{equation*}
        \MV_W(F_s(u_B)[c_1],\dots, F_{s+\ell-1}(u_B)[c_\ell]).
    \end{equation*}
    More precisely, let $T=\Ext_\rho(B)$ and $p=|T|$. After identifying the coordinates of $E_n$ with the labels in $B$, we have
    \begin{equation}\label{eq-local-hypersimplex}
        F_k(u_B)=b_T+\frac{k-p}{d}b_B+T_B(D_{k-p}^{(n)}), \qquad s\leq k\leq s+\ell-1.
    \end{equation}
    Each basis satisfying the stated condition gives exactly one summand. This also holds when some columns or projected subset sums coincide.
\end{proposition}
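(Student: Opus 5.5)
Apply Lemma \ref{lemma-upper-cells} to the $n$ lifted polytopes ($c_j$ separately indexed copies of $\widehat{P}_{s+j-1}$). A cell $F\in\mathcal{C}$ is the tuple of projected upper faces selected by a common normal $u$. By \eqref{eq-score-face}, each entry $F_k(u)$ is determined by the tied score blocks of $s_1(u),\dots,s_N(u)$. The plan is to show three things: a cell with nonzero mixed volume forces $u=u_B$ for a basis $B$; the condition \eqref{eq-contributing-condition} is exactly the condition that the selected cuts fall strictly inside the block $B$; and then to compute the faces.

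\emph{Step 1: structure of faces.} For a given $u$, the face $F_k(u)$ is a point unless the cut $k$ lies strictly inside some block $B'$ with $p'<k<p'+|B'|$. In that case $F_k(u)$ is a translate of the image of the hypersimplex slice on $B'$, with direction space $\op{span}\{b_i-b_j : i,j\in B'\}$. By Lemma \ref{lemma-tied-block}, the columns of $B'$ are independent, so this space has dimension $|B'|-1\leq n$.

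\emph{Step 2: nonvanishing forces a single basis block.} A nonzero mixed volume requires the $n$ entries to have the following property (standard criterion, Schneider Thm.~5.1.8): for every subfamily of $m$ entries, the sum of their direction spaces has dimension at least $m$. In particular, no entry is a point, and the sum of all direction spaces is $W$. I would argue that the cuts $s,\dots,s+\ell-1$ are consecutive integers, so any two distinct blocks containing cuts in their interiors would have a block-boundary cut between them. That boundary cut is some $k$ in the range, and it yields a point face, hence a zero summand. So all cuts lie strictly inside one block $B'$. The full direction space is then the span of $b_i-b_j$ over $B'$, which must equal $W$. Hence $|B'|-1\geq n$, and with Lemma \ref{lemma-tied-block} we get $|B'|=d$, so $B'$ is a basis. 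By the uniqueness in Lemma \ref{lemma-basis-tie}, $u=u_B$ up to the common tie value; conversely, a normal with a basis block is determined. With $p=\ext_\rho(B)$, the interior condition $p<s$ and $s+\ell-1<p+d$ is precisely \eqref{eq-contributing-condition}.

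\emph{Step 3: converse and the explicit face.} For $B$ satisfying \eqref{eq-contributing-condition}, take $u=u_B$. By \eqref{eq-score-face}, $F_k(u_B)=b_T+T_B(\text{uncentered slice of sum }k-p)$, and recentering by $\frac{k-p}{d}\one_B$ gives \eqref{eq-local-hypersimplex}. The mixed volume is then $w_A(B)\,\MV_{E_n}(\ldots)$ by \eqref{eq-TB-mixed-volume}. This value is positive, for example because each consecutive hypersimplex is full-dimensional in $E_n$, so the tuple lies in $\mathcal{C}$ and contributes nonzero. Injectivity follows from the uniqueness of $u_B$ and from the fact that distinct tuples give distinct cells (Lemma \ref{lemma-upper-cells}). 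Distinct bases give distinct normals, because the basis block at $u_B$ is exactly $B$. Coincident columns or subset sums cause no trouble, since heights are attached to labels and blocks are defined by label scores.

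The main obstacle is Step 2: making rigorous that the summand vanishes unless all cuts lie inside a single block. This needs the dimension criterion for mixed volumes, or a more elementary substitute. The point-entry case is immediate from Section~\ref{subsection-MV-conventions}. The remaining case, where all entries are positive-dimensional but lie in one deficient block, reduces to the observation that all entries then lie in translates of a subspace of dimension less than $n$, so every mixed volume vanishes by translating into that subspace and using the volume polynomial.
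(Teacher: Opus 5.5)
Your Steps 1--3 follow the paper's proof almost line for line, and Steps 1--2 and the converse with the face formula are correct. Specifically: a block boundary between two consecutive cuts gives a point face, so all cuts lie in one tied block. The deficient-span argument together with Lemma \ref{lemma-tied-block} makes that block a basis. Lemma \ref{lemma-basis-tie} then gives $u=u_B$ and $p=\ext_\rho(B)$, and full-dimensionality of the local hypersimplices gives the converse. The one step that fails as written is injectivity.

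You claim that distinct bases give distinct normals ``because the basis block at $u_B$ is exactly $B$.'' Lemma \ref{lemma-basis-tie} only says that the block \emph{of score $q_B$} is $B$. Other tied blocks at the same normal can also be bases. For example, take $a_j=(j-1,1)\in\R^2$ for $j=1,\dots,4$, with $\varphi$ the second coordinate, and $\rho=(0,1,5,6)$. This $\rho$ is circuit-generic, since the four triple circuits pair with $\rho$ to $\pm3$. Identifying $W\cong\R$, one finds $u_{\{1,2\}}=u_{\{3,4\}}=-1$. The scores at this normal are $(0,0,3,3)$, with tie values $0$ and $3$. So distinct bases can share a normal.

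The conclusion is still true, but it needs the ingredient the paper uses: ``the score partition and the specified cuts recover the block $B$.'' The repair has two parts.
\begin{enumerate}
\item A full-dimensional cell determines its roof, and hence its upper normal. This is the roof argument inside Lemma \ref{lemma-upper-cells}. It shows that equal tuples force equal normals, which is the direction you need. You cited the converse, ``distinct tuples give distinct cells.''
\item At a fixed normal, the cuts single out the block. Suppose $B\neq B'$ both satisfy \eqref{eq-contributing-condition} and $u_B=u_{B'}$. Then the cut $s$ lies strictly inside two distinct tied blocks at the same normal. This is impossible, because distinct blocks occupy disjoint ranges of positions in the score order.
\end{enumerate}
In the example above, with $n=\ell=1$, the basis $\{3,4\}$ contributes only for $s=1$ and $\{1,2\}$ only for $s=3$. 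With this two-part fix, your proof coincides with the paper's.
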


\begin{proof}
    Consider a nonzero summand in \eqref{eq-upper-additivity}, and let $u$ be the parameter of its common upper normal.
    Its local mixed volume is
    \begin{equation*}
        \MV_W(F_s(u)[c_1],\dots,F_{s+\ell-1}(u)[c_\ell]).
    \end{equation*}
    None of the selected faces can be a point.
    By \eqref{eq-score-face} and the observation about block boundaries, each cut $s,s+1,\dots,s+\ell-1$ lies strictly inside a tied score block.
    All these cuts lie in the same block: otherwise a boundary between two blocks will be an integer between two selected cuts, hence will itself be selected and give a point face.
    We denote the common block by $B$.

    By \eqref{eq-score-face}, all selected face directions lie in
    \begin{equation*}
        \op{span}\{b_j-b_k\mid j, k\in B\}.
    \end{equation*}
    If this space had dimension less than $n$, every positive Minkowski combination of the faces would have zero $n$-dimensional volume, so the local mixed volume would be zero.
    It therefore has dimension $n$.
    Therefore, $|B|\geq n+1$, while Lemma \ref{lemma-tied-block} gives $|B|\leq d=n+1$. As a result, $B$ is a basis.
    By Lemma \ref{lemma-basis-tie}, $u$ equals $u_B$, the labels with higher scores form $T=\Ext_\rho(B)$ and their number is $p=\ext_\rho(B)$.

    All selected cuts lie strictly inside this block exactly when
    \begin{equation*}
        p<s \quad\text{and}\quad s+\ell-1<p+d=p+n+1.
    \end{equation*}
    These integer inequalities are equivalent to \eqref{eq-contributing-condition}.
    For such a cut $k$, subtract $(k-p)\one_B/d$ from the coordinate hypersimplex in \eqref{eq-score-face}.
    The resulting body is its centered copy $D_{k-p}^{(n)}$ in $E_B$, and the removed vector maps to $(k-p)b_B/d$.
    This proves the face formula \eqref{eq-local-hypersimplex}.

    Conversely, let $B$ satisfy \eqref{eq-contributing-condition}.
    At its unique tie $u_B$, every selected index $k-p$ lies in $\{1,\dots,n\}$.
    Each hypersimplex $D_{k-p}^{(n)}$ is full-dimensional and $T_B$ is an isomorphism by Lemma \ref{lemma-basis-tie}.
    Therefore, all selected faces are full-dimensional in $W$, their mixed volume is positive, and their sum is one of the full-dimensional projected upper cells in Lemma \ref{lemma-upper-cells}.
    Consequently, the corresponding nonzero summand exists.

    To verify the uniqueness, note that a full-dimensional projected cell determines its affine roof and hence the normalized upper normal by \eqref{eq-roof-contact}.
    At that normal, the score partition and the specified cuts recover the block $B$ containing them.
    Therefore, distinct bases satisfying the condition cannot give the same cell or summand.
    Within a basis cell, the maximizing $k$-subsets are $T\cup J$ with $J\subset B$ and $|J|=k-p$.
    If two such subsets have the same projected image, then
    \begin{equation*}
        \sum_{j\in B}(\one_J-\one_{J'})_j b_j=0,\qquad \sum_{j\in B}(\one_J-\one_{J'})_j=0.
    \end{equation*}
    Affine independence of the $b_j$, $j\in B$, implies $J=J'$, so the exposed labeled generators remain distinct inside each contributing cell.
    In particular, if $a_j=a_k$ with $j\neq k$, then the vector $e_j-e_k$ is a circuit vector and
    \begin{equation*}
        s_j(u)-s_k(u)=\rho_j-\rho_k\neq0.
    \end{equation*}
    Therefore, repeated columns cannot lie in the same tied block, and the bijection also holds in this case.
\end{proof}

We now prove the consecutive-multiplicity formula, which is the \emph{structural theorem} of the paper.
It combines the upper-cell formula with Proposition \ref{prop-classification} to express the relevant mixed volumes in terms of the coefficients of $f_A$.
The case where all multiplicities are one gives Theorem \ref{thm-representation}. The two specializations with repeated endpoints in Corollary \ref{cor-three-lists} give the other mixed volumes needed to prove Theorem \ref{thm-main}.

\begin{theorem}\label{thm-structural}
    Let $c=(c_1,\dots,c_\ell)$ be a composition of $n$, with every $c_j\geq1$, and let $1\leq s\leq N-\ell$.
    For $0\leq a\leq n-\ell$, define $q^{(a)}\in\Z_{\geq0}^n$ by
    \begin{equation*}
        q^{(a)}_{a+j}=c_j\quad(1\leq j\leq\ell),\qquad q^{(a)}_k=0\quad\text{otherwise}.
    \end{equation*}
    Then
    \begin{equation}\label{eq-structural}
        \MV_W(P_s[c_1],P_{s+1}[c_2],\dots,P_{s+\ell-1}[c_\ell])=\frac{1}{n!}\sum_{a=0}^{n-\ell} \ME(q^{(a)}) \alpha_{s-a-1}.
    \end{equation}
    The coefficients outside $\{0,\dots,r\}$ are interpreted as zero.
\end{theorem}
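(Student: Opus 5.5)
We apply Lemma \ref{lemma-upper-cells} to the list of lifted polytopes consisting of $c_j$ separately indexed copies of $\widehat{P}_{s+j-1}$, $1\leq j\leq\ell$. The projected sum $\sum_j c_jP_{s+j-1}$ has nonempty interior because each $P_k$ with $1\leq k\leq N-1$ is full-dimensional by Lemma \ref{lemma-projected-properties}, and $s+\ell-1\leq N-1$. Formula \eqref{eq-upper-additivity} then writes the left-hand side of \eqref{eq-structural} as a sum of local mixed volumes over upper cells. By Proposition \ref{prop-classification}, the nonzero local terms are indexed bijectively by the bases $B$ with $0\leq s-\ext_\rho(B)-1\leq n-\ell$. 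Setting $a=s-\ext_\rho(B)-1$, this is exactly the range $0\leq a\leq n-\ell$. Moreover, $p=\ext_\rho(B)=s-a-1$.

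Next we evaluate each local term using the face formula \eqref{eq-local-hypersimplex}. For $k=s+j-1$ we have $k-p=a+j$, so the $j$-th group of faces equals $T_B(D^{(n)}_{a+j})$ up to translation. By translation invariance and \eqref{eq-TB-mixed-volume}, the local term is
\begin{equation*}
    w_A(B)\,\MV_{E_B}\bigl(D^{(n)}_{a+1}[c_1],\dots,D^{(n)}_{a+\ell}[c_\ell]\bigr)=\frac{w_A(B)}{n!}\ME(q^{(a)}).
\end{equation*}
Here we identify $E_B$ with $E_n$ by relabeling coordinates along $B$. This relabeling preserves the root-lattice density, and the mixed volume is symmetric, so the ordering of entries in the definition \eqref{eq-ME} is immaterial.

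Finally we group the bases by $a$, equivalently by $\ext_\rho(B)=s-a-1$. By the definition \eqref{eq-polynomial} together with the independence from $\rho$, the sum of $w_A(B)$ over bases with $\ext_\rho(B)=i$ equals $\alpha_i$. Summing over $a$ gives \eqref{eq-structural}. If $s-a-1\notin\{0,\dots,r\}$, no basis has that external semi-activity, since $0\leq\ext_\rho(B)\leq N-d=r$. The corresponding contribution is therefore zero, which matches the zero convention.

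The only delicate point is bookkeeping. We must make sure that the separately indexed copies in Lemma \ref{lemma-upper-cells} produce the mixed volume with multiplicities $c_j$ and no extra combinatorial factor. We must also check that each basis contributes exactly once even when columns repeat. Both facts are already supplied by the final clauses of Lemma \ref{lemma-upper-cells} and Proposition \ref{prop-classification}, so the proof reduces to the substitutions above.
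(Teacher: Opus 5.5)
Your proposal is correct and follows the paper's proof essentially step for step. You apply Lemma \ref{lemma-upper-cells} to the repeated list, use Proposition \ref{prop-classification} to index the nonzero cells by bases with $a=s-\ext_\rho(B)-1\in\{0,\dots,n-\ell\}$, evaluate each cell as $w_A(B)\ME(q^{(a)})/n!$ via \eqref{eq-local-hypersimplex} and \eqref{eq-TB-mixed-volume}, and then group the bases by $a$ to recover the coefficients $\alpha_{s-a-1}$.
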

\begin{proof}
    Each selected polytope $P_{s+j-1}$ is full-dimensional by Lemma \ref{lemma-projected-properties}, so Lemma \ref{lemma-upper-cells} applies to the list of $n$ lifted entries with the prescribed repetitions.
    By Proposition \ref{prop-classification}, every nonzero summand in \eqref{eq-upper-additivity} corresponds to exactly one basis $B$ satisfying the stated condition.
    Let $p=\ext_\rho(B)$ and $a=s-p-1$.
    Then $0\leq a\leq n-\ell$, and the index of the local hypersimplex at the $j$th selected cut is
    \begin{equation*}
        (s+j-1)-p=a+j.
    \end{equation*}
    The coordinate identification $E_B\cong E_n$ preserves the root-lattice density.
    Translation invariance, the face formula \eqref{eq-local-hypersimplex}, and the mixed-volume change \eqref{eq-TB-mixed-volume} then give
    \begin{equation*}
        \begin{aligned}
            &\MV_W(F_s(u_B)[c_1],\dots,F_{s+\ell-1}(u_B)[c_\ell])\\
            &\qquad=w_A(B) \MV_{E_n}(D_{a+1}^{(n)}[c_1],\dots,D_{a+\ell}^{(n)}[c_\ell])\\
            &\qquad=\frac{w_A(B)}{n!}\ME(q^{(a)}),
        \end{aligned}
    \end{equation*}
    where the last equality is the normalization \eqref{eq-ME}.
    Grouping the nonzero summands first by $a$ and then by basis gives
    \begin{equation*}
        \begin{aligned}
            &\MV_W(P_s[c_1],\dots,P_{s+\ell-1}[c_\ell])\\
            &\qquad=\frac{1}{n!}\sum_{a=0}^{n-\ell} \ME(q^{(a)})\sum_{\substack{B\in\mathcal{B},\\ \ext_\rho(B)=s-a-1}}w_A(B)\\
            &\qquad=\frac{1}{n!}\sum_{a=0}^{n-\ell}\ME(q^{(a)})\alpha_{s-a-1}.
        \end{aligned}
    \end{equation*}
    The inner sum is the coefficient defined in \eqref{eq-polynomial}. Note that it equals zero when $s-a-1\notin\{0,\dots,r\}$.
    This proves \eqref{eq-structural} for the stated range, including both endpoints.
    Lemma \ref{lemma-upper-cells} uses a separate parameter for every occurrence of a lifted polytope, even when occurrences coincide.
    The $n!$ factors in that coefficient comparison cancel on the two sides of \eqref{eq-upper-additivity}.
    The factor $1/n!$ in \eqref{eq-structural} comes from the normalization \eqref{eq-ME}, and no additional factor $c_1!\cdots c_\ell!$ occurs.
\end{proof}

\begin{proof}[Proof of Theorem \ref{thm-representation}]
    In Theorem \ref{thm-structural}, take $\ell=n$, $c=(1,\dots,1)$, and $s=i+1$.
    For $0\leq i\leq r$, we have $1\leq s\leq r+1=N-n$, so these indices satisfy the hypotheses.
    Only $a=0$ occurs, and Lemma \ref{lemma-constants} gives
    \begin{equation*}
        \ME(1,\dots,1)=n!\MV_{E_n}(D_1^{(n)},\dots,D_n^{(n)})=n!.
    \end{equation*}
    Consequently \eqref{eq-structural} becomes
    \begin{equation*}
        \MV_W(P_{i+1},\dots,P_{i+n})=\alpha_i,
    \end{equation*}
    which is \eqref{eq-window-intro}.
    The argument applies to every circuit-generic $\rho$, whereas the displayed mixed volume depends only on the arrangement and its density.
    It therefore also proves independence of $f_{A,\rho}$ from $\rho$ for $d\geq 2$.
    The rank-one verification is given in Section \ref{subsection-quadratic}.
\end{proof}

\section{Coefficient inequalities and growth bounds}
\label{section-inequalities}

\subsection{Three specializations and the quadratic inequality}
\label{subsection-quadratic}

Theorem \ref{thm-structural} gives both the coefficient itself and the two mixed volumes with repeated entries needed for the Alexandrov--Fenchel inequality.
All these mixed volumes are computed in the same $n$-dimensional space $W$ with its induced density.

\begin{corollary}\label{cor-three-lists}
    For every $0\leq i\leq r$,
    \begin{equation}\label{eq-window}
        \MV_W(P_{i+1},\dots,P_{i+n})=\alpha_i.
    \end{equation}
    For $n\geq2$ and the same range of $i$,
    \begin{gather}
        \MV_W(P_{i+1}[2], P_{i+2},\dots,P_{i+n-1})=\frac{\alpha_i+(n-1)\alpha_{i-1}}n,  \label{eq-left-window}\\
        \MV_W(P_{i+2},\dots,P_{i+n-1},P_{i+n}[2])=\frac{\alpha_i+(n-1)\alpha_{i+1}}n.  \label{eq-right-window}
    \end{gather}
    When $n=2$, the unrepeated lists are empty.
\end{corollary}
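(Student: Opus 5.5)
All three identities are specializations of Theorem \ref{thm-structural}, combined with the hypersimplex constants of Lemma \ref{lemma-constants} and the reflection symmetry $x\mapsto -x$ on $E_n$. The work consists of choosing the composition $c$ and the starting index $s$ in each case, checking that $1\leq s\leq N-\ell$, and evaluating the finitely many $\ME(q^{(a)})$ that appear.

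For \eqref{eq-window}, this is exactly the proof of Theorem \ref{thm-representation} already given. We take $c=(1,\dots,1)$, $\ell=n$ and $s=i+1$. Only $a=0$ occurs, and $\ME(1,\dots,1)=n!$ by \eqref{eq-constant-main}.

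For \eqref{eq-left-window}, with $n\geq2$, take $c=(2,1,\dots,1)$, so $\ell=n-1$, and $s=i+1$. The range condition $1\leq i+1\leq N-n+1=r+2$ holds for $0\leq i\leq r$. Now $a\in\{0,1\}$.
\begin{itemize}
\item For $a=0$, $q^{(0)}=(2,1,\dots,1,0)$, and \eqref{eq-constant-left} gives $\ME(q^{(0)})=n!/n$. This term multiplies $\alpha_{i}$.
\item For $a=1$, $q^{(1)}=(0,2,1,\dots,1)$, which is the list $D_2^{(n)}[2],D_3^{(n)},\dots,D_n^{(n)}$. By \eqref{eq-constant-right}, $\ME(q^{(1)})=n!(n-1)/n$. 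This term multiplies $\alpha_{s-2}=\alpha_{i-1}$.
\end{itemize}
Dividing by $n!$ yields $(\alpha_i+(n-1)\alpha_{i-1})/n$. The zero-extension convention handles $i=0$.

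For \eqref{eq-right-window}, take $c=(1,\dots,1,2)$, $\ell=n-1$ and $s=i+2$. The range condition $i+2\leq N-n+1=r+2$ holds for $i\leq r$, and $s\geq1$. Again $a\in\{0,1\}$.
\begin{itemize}
\item For $a=0$, $q^{(0)}=(1,\dots,1,2,0)$, the list $D_1^{(n)},\dots,D_{n-2}^{(n)},D_{n-1}^{(n)}[2]$. It multiplies $\alpha_{s-1}=\alpha_{i+1}$.
\item For $a=1$, $q^{(1)}=(0,1,\dots,1,2)$, the list $D_2^{(n)},\dots,D_{n-1}^{(n)},D_n^{(n)}[2]$. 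It multiplies $\alpha_{s-2}=\alpha_i$.
\end{itemize}
Applying $x\mapsto-x$, which preserves the density of $E_n$ and sends $D_k^{(n)}$ to $D_{n+1-k}^{(n)}$, transforms these into the lists of \eqref{eq-constant-right} and \eqref{eq-constant-left} respectively, by symmetry of mixed volume. So the $a=0$ term has constant $(n-1)/n$ and the $a=1$ term has constant $1/n$, which gives $(\alpha_i+(n-1)\alpha_{i+1})/n$.

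The only delicate point is this index bookkeeping. One must match each $a$ to the correct coefficient $\alpha_{s-a-1}$ and the correct constant, and confirm that the endpoint cases $i=0$ and $i=r$ stay within the hypotheses of Theorem \ref{thm-structural}, with out-of-range coefficients read as zero. When $n=2$ the unrepeated parts of the lists are empty, and the same computation applies with $\ell=1$ and $c=(2)$.
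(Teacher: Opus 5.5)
Your proposal is correct and matches the paper's proof: you use the same specializations of Theorem \ref{thm-structural}, namely $c=(2,1,\dots,1)$ with $s=i+1$ and $c=(1,\dots,1,2)$ with $s=i+2$, the same constants from Lemma \ref{lemma-constants}, and the reflection $x\mapsto -x$ for the right window. Your explicit checks of the range $1\leq s\leq N-\ell=r+2$ and your $n=2$ remark spell out what the paper only notes briefly.
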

\begin{proof}
    The first identity was just proved.
    For \eqref{eq-left-window}, apply Theorem \ref{thm-structural} with $s=i+1$ and $c=(2,1,\dots,1)$ of length $n-1$.
    The two possible shifts $a=0, 1$ give the local lists
    \begin{equation*}
        (D_1^{(n)}[2], D_2^{(n)},\dots,D_{n-1}^{(n)}),\quad (D_2^{(n)}[2], D_3^{(n)},\dots,D_n^{(n)}).
    \end{equation*}
    Their mixed volumes in $E_n$ are $1/n$ and $(n-1)/n$, respectively, by Lemma \ref{lemma-constants}.
    After the factor $n!$ in \eqref{eq-ME} cancels the factor $1/n!$ in \eqref{eq-structural}, these multiply $\alpha_i$ and $\alpha_{i-1}$.
    For \eqref{eq-right-window}, use $s=i+2$ and $c=(1,\dots,1,2)$.
    Reflection of the hypersimplices in $E_n$ gives factors $(n-1)/n$ and $1/n$, multiplying $\alpha_{i+1}$ and $\alpha_i$.
    All indices of the projected polytopes lie between $1$ and $N-1$, including when $i=0$ or $i=r$.
    The zero extension of the coefficients includes these two cases.
\end{proof}

\begin{proof}[Proof of Theorem \ref{thm-main}]
    We first consider the rank one case.
    For higher rank, we prove positivity and the degree assertion, then the coefficient symmetry \eqref{eq-palindromicity}, the quadratic inequality, and its consequences for log-concavity, equality, and trapezoidality.

    If $d=1$, all columns are equal to the same nonzero vector.
    The two-label circuits force the $\rho_j$ to be distinct.
    The singleton bases have activities $0,1,\dots,N-1$, once each, and a common positive weight $w$. Therefore,
    \begin{equation}
        \label{eq-rank-one}
        f_A(t)=w(1+t+\dots+t^{N-1}),
    \end{equation}
    which proves all the assertions in rank one including the endpoint cases of \eqref{eq-quadratic} with the prescribed zero extension.
    It also proves the invariance under the circuit-generic choice of $\rho$ in this rank.

    Now suppose $d\geq2$ and let $n=d-1$. We first prove positivity.
    Every entry of the mixed volume in \eqref{eq-window} is full-dimensional by Lemma \ref{lemma-projected-properties}.
    For a fixed $i$, translate these $n$ bodies separately so that each contains the origin in its interior, and choose a full-dimensional ball $B_\varepsilon\subset W$ contained in all of them.
    Translation invariance and monotonicity give
    \begin{equation*}
        \alpha_i=\MV_W(P_{i+1},\dots,P_{i+n})\geq \MV_W(B_\varepsilon[n])=\op{vol}_W(B_\varepsilon)>0.
    \end{equation*}
    Thus every $\alpha_i$, $0\leq i\leq r$, is positive, and $\alpha_r>0$ proves $\deg f_A=r$.

    We next prove the palindromicity in \eqref{eq-palindromicity}.
    Complementing the cube coordinates gives the reflection identity \eqref{eq-reflection}.
    Since $N=r+n+1$, the coefficient formula, translation invariance, simultaneous reflection, and symmetry of mixed volume give
    \begin{align*}
        \alpha_{r-i}&=\MV_W(P_{r-i+1},\dots,P_{r-i+n})\\
        &=\MV_W(b_{[N]}-P_{i+n},\dots,b_{[N]}-P_{i+1})\\
        &=\MV_W(P_{i+1},\dots,P_{i+n})=\alpha_i
    \end{align*}
    for $0\leq i\leq r$. We will use this symmetry to prove trapezoidality.

    To prove the quadratic inequality, suppose that $d\geq 3$, so $n\geq 2$.
    For $0\leq i\leq r$, apply \eqref{eq-AF} in $U=W$, of dimension $m=n$, with
    \begin{equation*}
        K_1=P_{i+1},\quad K_2=P_{i+n},\quad K_j=P_{i+j-1},\quad 3\leq j\leq n.
    \end{equation*}
    The common list $K_3,\dots,K_n$ is empty when $n=2$.
    Corollary \ref{cor-three-lists} gives
    \begin{equation}\label{eq-AF-coefficients}
        \alpha_i^2\geq\frac{\alpha_i+(n-1) \alpha_{i-1}}{n}\, \frac{\alpha_i+(n-1)\alpha_{i+1}}{n},
    \end{equation}
    which reduces to
    \begin{equation*}
        (n+1)\alpha_i^2\geq\alpha_i(\alpha_{i-1}+\alpha_{i+1})+(n-1)\alpha_{i-1}\alpha_{i+1}.
    \end{equation*}
    Since $d=n+1$, this is \eqref{eq-quadratic}, including both endpoint indices.
    For $d=2$, we have $n=1$, and \eqref{eq-window} gives $\alpha_i$ as the length of $P_{i+1}$ in $W$.
    The endpoint polytopes $P_0$ and $P_N$ have length zero.
    Taking lengths in \eqref{eq-midpoint-inclusion} then gives
    \begin{equation*}
        2\alpha_i\geq\alpha_{i-1}+\alpha_{i+1},\quad 0\leq i\leq r.
    \end{equation*}
    Multiplying the positive coefficient $\alpha_i$ proves \eqref{eq-quadratic} in this case as well.

    We now deduce ordinary log-concavity and its equality criterion from \eqref{eq-quadratic}.
    At an internal index, let $a=\alpha_{i-1}$, $b=\alpha_i$, $c=\alpha_{i+1}$, and $x=b/\sqrt{ac}$.
    Positivity makes these quantities well defined.
    The arithmetic--geometric mean inequality and \eqref{eq-quadratic} imply
    \begin{equation*}
        dx^2\geq x\frac{a+c}{\sqrt{ac}}+d-2\geq 2x+d-2.
    \end{equation*}
    Hence $(x-1)(dx+d-2)\geq 0$.
    The second factor is positive, so $x\geq1$ and $b^2\geq ac$.
    If $b^2=ac$, then \eqref{eq-quadratic} reduces to $2ac\geq b(a+c)$.
    Dividing by $b=\sqrt{ac}$ forces equality in the arithmetic--geometric mean inequality, hence $a=b=c$.
    The converse is immediate and proves \eqref{eq-lc-equality}.

    It remains to prove trapezoidality.
    If $r=0$, the sequence consists of one positive term and the assertion is immediate.
    For $r\geq1$, define $R_j=\alpha_j/\alpha_{j-1}$ for $1\leq j\leq r$.
    Log-concavity makes $R_1,\dots,R_r$ nonincreasing, while \eqref{eq-palindromicity} gives
    \begin{equation*}
        R_{r-j+1}=\frac{\alpha_{r-j+1}}{\alpha_{r-j}}=\frac{\alpha_{j-1}}{\alpha_j}=R_j^{-1}.
    \end{equation*}
    Let $q$ be the number of these ratios that are greater than one.
    Monotonicity places them in the first $q$ positions, and the displayed symmetry places exactly $q$ ratios less than one in the last $q$ positions.
    All remaining ratios equal one, and $0\leq q\leq\lfloor r/2\rfloor$.
    For odd $r$, the central ratio is its own reciprocal and hence equals one.
    Consequently,
    \begin{equation*}
        \alpha_0<\cdots<\alpha_q=\cdots=\alpha_{r-q}>\cdots>\alpha_r,
    \end{equation*}
    Therefore, the coefficient sequence has one centered plateau, which completes the proof.
\end{proof}

\subsection{Power concavity}\label{subsection-power}

We deduce Corollary \ref{cor-power} from positivity and the quadratic inequality in Theorem \ref{thm-main}, using the following lemma.
Its proof is independent of flat arrangements and mixed volumes.

\begin{lemma}
    \label{lemma-power-numerical}
    Let $a,b,c>0$ and let $k\geq1$ be an integer. If
    \begin{equation}
    \label{eq-power-input}
        (k+1)b^2\geq b(a+c)+(k-1)ac,
    \end{equation}
    then $2b^{1/k}\geq a^{1/k}+c^{1/k}$. For $k\geq2$, equality in the latter inequality holds if and only if $a=b=c$.
\end{lemma}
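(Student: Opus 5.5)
By homogeneity and monotonicity in $b$, the plan reduces the lemma to a one-variable polynomial inequality. Write $x=a^{1/k}$, $y=c^{1/k}$ and $m=(x+y)/2$, and put
\[
Q(\beta)=(k+1)\beta^2-(a+c)\beta-(k-1)ac .
\]
Then \eqref{eq-power-input} says exactly $Q(b)\geq0$. Since $Q(0)=-(k-1)ac\leq0$, with strict inequality for $k\geq2$, and $Q$ is an upward parabola, $Q$ has a unique positive root $\beta_+$; for $k=1$ I take $\beta_+=(a+c)/2$, which is the root of $Q(\beta)=\beta(2\beta-a-c)$. On $(0,\infty)$ one has $Q\geq0$ exactly on $[\beta_+,\infty)$, so the hypothesis gives $b\geq\beta_+$. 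The target $2b^{1/k}\geq x+y$ means $b\geq m^k$. It therefore suffices to prove $\beta_+\geq m^k$, which is equivalent to $Q(m^k)\leq0$.

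Both sides of $Q(m^k)\leq0$ are homogeneous of degree $2k$ in $(x,y)$, so I normalize $m=1$ and write $x=1+t$, $y=1-t$ with $0\leq t<1$. The inequality $Q(1)\leq0$ then becomes
\[
g(t)\coloneqq(1+t)^k+(1-t)^k+(k-1)(1-t^2)^k\geq k+1,\qquad 0\leq t<1 .
\]
The cases $k=1$ and $k=2$ are explicit. For $k=1$, $g\equiv2$. For $k=2$,
\[
g(t)=(2+2t^2)+(1-2t^2+t^4)=3+t^4 .
\]
In general $g(0)=k+1$. In the variable $s=t^2$, the coefficient of $s$ vanishes because $2\binom k2-(k-1)k=0$. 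The coefficient of $s^2$ is $2\binom k4+(k-1)\binom k2>0$, so $g(t)>k+1$ for small $t>0$.

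The main obstacle is the global inequality, because the higher coefficients in $s$ alternate in sign. My first attempt will be to show $g'(t)\geq0$ on $(0,1)$, which amounts to
\[
(1+t)^{k-1}-(1-t)^{k-1}\geq2(k-1)\,t\,(1-t^2)^{k-1}.
\]
To do this I write the left side as $(k-1)\int_{-t}^{t}(1+u)^{k-2}\,du$. I then compare the integrand pointwise, after symmetrizing $u\leftrightarrow-u$, with $(1-t^2)^{k-1}$. The symmetrized integrand satisfies
\[
\tfrac12\bigl((1+u)^{k-2}+(1-u)^{k-2}\bigr)\geq1\geq(1-t^2)^{k-1}
\]
by convexity of $z\mapsto z^{k-2}$ for $k\geq2$. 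For $k\geq2$ the second inequality is strict when $t>0$, so $g$ is strictly increasing.

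The equality statement follows from the same chain. If $2b^{1/k}=a^{1/k}+c^{1/k}$ with $k\geq2$, then $m^k=b\geq\beta_+\geq m^k$, so $g(t)=k+1$. Strict monotonicity of $g$ then forces $t=0$, that is $a=c$. With $a=c$ one has $m^k=a$, so $b=a$ as well.
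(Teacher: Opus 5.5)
Your proof is correct, and its overall structure matches the paper's. Your $g(t)$ is exactly the paper's $F_k(1+t,1-t)$ on the line $x+z=2$. Your reduction via the quadratic $Q$ in $b$, where the admissible $b>0$ form the ray $[\beta_+,\infty)$, is equivalent to the paper's rescaling step: there one sets $(x,z)=((a/b)^{1/k},(c/b)^{1/k})$ and, if $x+z>2$, shrinks by $\lambda=2/(x+z)$, which strictly decreases $F_k$.

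The real difference lies in how the boundary inequality $g(t)\geq k+1$ is proved.
\begin{itemize}
\item You show $g'>0$ on $(0,1)$, using the integral representation of $(1+t)^{k-1}-(1-t)^{k-1}$ together with convexity of $z\mapsto z^{k-2}$.
\item The paper splits $g$ so that the sign alternation you flagged never appears. The part $(1+t)^k+(1-t)^k$ has only nonnegative coefficients in $t^2$, so it is at least $2+k(k-1)t^2$. Bernoulli's inequality gives $(1-t^2)^k\geq 1-kt^2$, strictly for $t\neq 0$ when $k\geq 2$. Adding $(k-1)$ times the Bernoulli bound, the $t^2$ terms cancel exactly, so $g(t)\geq k+1$ with equality only at $t=0$.
\end{itemize}

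Your argument gives the stronger fact that $g$ is strictly increasing on $[0,1)$. For the lemma, including its equality case, the paper's two termwise estimates suffice, need no calculus on $g$ itself, and are shorter.
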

\begin{proof}
    When $k=1$ the lemma clearly holds.
    Suppose $k\geq2$. Let $x=(a/b)^{1/k}$, $z=(c/b)^{1/k}$. Dividing \eqref{eq-power-input} by $b^2$ gives
    \begin{equation}
    \label{eq-Fk}
        F_k(x,z)\coloneqq x^k+z^k+(k-1)(xz)^k\leq k+1.
    \end{equation}
    We will show that this implies $x+z\leq2$, where equality holds exactly at $x=z=1$.

    First consider the boundary $x+z=2$. Write $x=1+s$ and $z=1-s$, with $|s|<1$ since $x,z>0$. The binomial formula gives
    \begin{equation*}
        (1+s)^k+(1-s)^k\geq2+k(k-1)s^2.
    \end{equation*}
    For $0\leq t<1$, the function $H_k(t)=(1-t)^k-1+kt$ satisfies
    \begin{equation*}
        H_k(0)=0,\quad H_k'(t)=k(1-(1-t)^{k-1})>0, \quad 0<t<1.
    \end{equation*}
    Taking $t=s^2$ shows that $(1-s^2)^k\geq1-ks^2$, with strict inequality if $s\neq0$. Therefore,
    \begin{equation*}
        F_k(1+s,1-s)\geq 2+k(k-1)s^2+(k-1)(1-ks^2)=k+1,
    \end{equation*}
    where equality holds only for $s=0$.

    If $x+z>2$, let $\lambda=2/(x+z)\in (0,1)$.
    The boundary computation gives
    \begin{equation*}
        k+1\leq F_k(\lambda x,\lambda z)=\lambda^k(x^k+z^k)+(k-1)\lambda^{2k}(xz)^k<F_k(x,z),
    \end{equation*}
    which contradicts \eqref{eq-Fk}, hence $x+z\leq2$, and multiplying by $b^{1/k}$ proves the power inequality.
    If equality holds, then $x+z=2$ and the boundary computation forces $x=z=1$ or $a=b=c$.
    The converse follows by substitution.
\end{proof}

\begin{proof}[Proof of Corollary \ref{cor-power}]
    For an internal index, take $a=\alpha_{i-1}$, $b=\alpha_i$, $c=\alpha_{i+1}$, and $k=d-1$.
    Theorem \ref{thm-main} gives $a,b,c>0$, and its quadratic inequality is exactly \eqref{eq-power-input}.
    Lemma \ref{lemma-power-numerical} therefore proves \eqref{eq-power}, together with its equality criterion when $d\geq3$.

    For $0<p\leq1/k$, let $u_i=\alpha_i^{1/k}$ and $\theta=kp\in (0,1]$.
    The inequality we just proved shows that $(u_i)$ is concave.
    Since $u\mapsto u^\theta$ is increasing and concave on $\R_{\geq0}$, we have
    \begin{equation*}
        \alpha_i^p=u_i^\theta\geq\left(\frac{u_{i-1}+u_{i+1}}2\right)^\theta\geq\frac{u_{i-1}^\theta+u_{i+1}^\theta}2.
    \end{equation*}
    This proves the remaining statement.
\end{proof}

\subsection{Ratio contraction and a binomial growth bound}
\label{subsection-ratios}

Ordinary log-concavity implies that successive coefficient ratios decrease. The rank-sensitive inequality quantifies the decrease. Its endpoint case gives an initial bound on the ratio. Iterating the inequality then gives a polynomial upper bound on coefficient growth.

\begin{proposition}\label{prop-ratios}
    Suppose $d\geq2$ and let $R_i=\alpha_i/\alpha_{i-1}$ for $1\leq i\leq r$.
    For $1\leq i\leq r-1$,
    \begin{equation}\label{eq-ratio-step}
        R_{i+1}\leq\Psi_d(R_i),\quad\Psi_d(x)=\frac{dx-1}{x+d-2}.
    \end{equation}

    If $R_s,R_{s+1},\dots,R_{s+k}$ are all greater than one, then
    \begin{equation}\label{eq-ratio-iterate}
        \frac{1}{R_{s+k}-1}\geq\frac1{R_s-1}+\frac{k}{d-1},\quad R_{s+k}-1\leq\frac{d-1}{k+(d-1)/(R_s-1)}.
    \end{equation}
    For every $1\leq i\leq r$, we also have
    \begin{equation}\label{eq-ratio-global}
        R_i\leq 1+\frac{d-1}{i}.
    \end{equation}
    Consequently, with $m_i=\min(i,r-i)$, we have
    \begin{equation}\label{eq-binomial-bound}
        \alpha_i\leq\alpha_0 \binom{m_i+d-1}{d-1}, \qquad 0\leq i\leq r.
    \end{equation}
\end{proposition}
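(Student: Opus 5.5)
The plan is to turn the quadratic inequality \eqref{eq-quadratic} of Theorem \ref{thm-main} into a statement about consecutive ratios, linearize the resulting one-step map by a M\"obius change of variable, and then iterate from the endpoint bound. Positivity of all $\alpha_i$ (Theorem \ref{thm-main}) makes every $R_i$ well defined and positive.

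\emph{One-step bound.} For $1\leq i\leq r-1$, I will divide \eqref{eq-quadratic} by $\alpha_{i-1}\alpha_i>0$. Using $\alpha_{i+1}/\alpha_{i-1}=R_iR_{i+1}$ and $\alpha_{i+1}/\alpha_i=R_{i+1}$, this gives
\begin{equation*}
    dR_i\geq 1+R_iR_{i+1}+(d-2)R_{i+1}=1+R_{i+1}(R_i+d-2).
\end{equation*}
Since $R_i+d-2>0$ for $d\geq2$, this is exactly \eqref{eq-ratio-step}.

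\emph{Iteration.} The key identity is
\begin{equation*}
    \Psi_d(x)-1=\frac{(d-1)(x-1)}{x+d-2},
\end{equation*}
so for $x>1$,
\begin{equation*}
    \frac1{\Psi_d(x)-1}=\frac{x+d-2}{(d-1)(x-1)}=\frac1{x-1}+\frac1{d-1}.
\end{equation*}
Thus $y=1/(x-1)$ turns $\Psi_d$ into the translation $y\mapsto y+1/(d-1)$. Let $j$ range over $s,\dots,s+k-1$, so that both $R_j>1$ and $R_{j+1}>1$. Then $0<R_{j+1}-1\leq\Psi_d(R_j)-1$, and taking reciprocals gives
\begin{equation*}
    \frac1{R_{j+1}-1}\geq\frac1{R_j-1}+\frac1{d-1}.
\end{equation*}
Summing over these $k$ steps yields the first form of \eqref{eq-ratio-iterate}, and inverting yields the second. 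Monotonicity of $\Psi_d$ is not even needed here, since only the one-step inequality is iterated. When $d=2$ the additive constant is $1$, and the same computation applies.

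\emph{Global bound and binomial bound.} The endpoint case $i=0$ of \eqref{eq-quadratic}, with $\alpha_{-1}=0$, reads $d\alpha_0^2\geq\alpha_0\alpha_1$. Hence $R_1\leq d=1+(d-1)/1$, which is \eqref{eq-ratio-global} for $i=1$.

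Now take $1\leq i\leq r$. If $R_i\leq1$, then \eqref{eq-ratio-global} is trivial. If $R_i>1$, log-concavity makes the ratios nonincreasing, so $R_1,\dots,R_i$ all exceed $1$. Applying \eqref{eq-ratio-iterate} with $s=1$ and $k=i-1$ gives
\begin{equation*}
    \frac1{R_i-1}\geq\frac1{d-1}+\frac{i-1}{d-1}=\frac{i}{d-1},
\end{equation*}
which is \eqref{eq-ratio-global}.

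For $i\leq r/2$ I will telescope:
\begin{equation*}
    \alpha_i=\alpha_0\prod_{j=1}^iR_j\leq\alpha_0\prod_{j=1}^i\frac{j+d-1}{j}=\alpha_0\binom{i+d-1}{d-1}.
\end{equation*}
For $i>r/2$ I will use palindromicity, $\alpha_i=\alpha_{r-i}$, and apply the previous case to $r-i=m_i$.

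The only delicate point is keeping the signs right when inverting inequalities, which is why every reciprocal step is restricted to ratios strictly greater than one. I expect no real obstacle beyond spotting the M\"obius linearization of $\Psi_d$.
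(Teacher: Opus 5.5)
Your proposal is correct and follows the paper's proof: you use the same division of \eqref{eq-quadratic} to get \eqref{eq-ratio-step}, the same reciprocal (M\"obius) linearization summed over consecutive ratios above one, and the same telescoping plus palindromicity for \eqref{eq-binomial-bound}. The only variation is in \eqref{eq-ratio-global}: the paper inducts from $R_1\leq d$ using that $\Psi_d$ is increasing with $\Psi_d(1+\frac{d-1}{i})=1+\frac{d-1}{i+1}$, so no case split is needed, while you split on $R_i\leq 1$ and use log-concavity to place $R_1,\dots,R_i$ above one before applying \eqref{eq-ratio-iterate} with $s=1$; both arguments are valid.
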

\begin{proof}
    Let $x=R_i$, $y=R_{i+1}$. Dividing \eqref{eq-quadratic} by $\alpha_i^2$ and multiplying by $x$ gives
    \begin{equation*}
        dx\geq1+(x+d-2)y.
    \end{equation*}
    The denominator $x+d-2$ is positive, which proves \eqref{eq-ratio-step}.
    For $x,y>1$,
    \begin{equation*}
        y-1\leq\Psi_d(x)-1=\frac{(d-1)(x-1)}{x+d-2},
    \end{equation*}
    so taking reciprocals gives
    \begin{equation*}
        \frac{1}{y-1}\geq\frac{1}{x-1}+\frac{1}{d-1}.
    \end{equation*}
    A simple summation proves \eqref{eq-ratio-iterate}.

    The endpoint case of Theorem \ref{thm-main} gives $R_1\leq d$ when $r\geq1$.
    For $x>0$, the function $\Psi_d$ is increasing since
    \begin{equation*}
        \Psi_d'(x)=\frac{(d-1)^2}{(x+d-2)^2}>0.
    \end{equation*}
    A direct substitution gives
    \begin{equation*}
        \Psi_d\left(1+\frac{d-1}{i}\right)=1+\frac{d-1}{i+1}.
    \end{equation*}
    Induction then proves \eqref{eq-ratio-global}, without requiring the ratios to stay above one.
    Multiplying from $1$ to $i$ gives
    \begin{equation*}
        \frac{\alpha_i}{\alpha_0}\leq\prod_{j=1}^i\frac{j+d-1}{j}=\binom{i+d-1}{d-1}.
    \end{equation*}
    Finally, we apply the same bound to $\alpha_{r-i}=\alpha_i$ to get \eqref{eq-binomial-bound}.
\end{proof}

For example, the normalized coefficients in rank two are bounded by $m_i+1$, and those in rank three by $(m_i+1)(m_i+2)/2$.
Therefore, the ratio estimate bounds the coefficient growth away from either endpoint. It also controls the rate at which successive ratios approach one along an increasing part of the sequence.

\section{Consecutive mixed-volume sequences}
\label{section-sequences}

Fix a flat arrangement of rank $d\geq2$. Let $n=d-1$, $c=(c_1,\dots,c_\ell)$ be a composition of $n$ with positive parts.
We consider all admissible placements of these multiplicities along the projected hypersimplices including both endpoints.
Let $p=n-\ell$ and with $q^{(a)}$ as in Theorem \ref{thm-structural}, define
\begin{equation}
    \label{eq-Ec}
    e_a(c)=\ME(q^{(a)}), \qquad E_c(t)=\sum_{a=0}^p e_a(c)t^a.
\end{equation}
We define the corresponding mixed volumes in $W$ by
\begin{equation}\label{eq-Mi}
    M_i(c)=\MV_W(P_{i+1}[c_1],\dots,P_{i+\ell}[c_\ell]),\qquad 0\leq i\leq N-\ell-1,
\end{equation}
and their generating polynomial is $S_c(t)=\sum_iM_i(c)t^i$.
Therefore, $i=0$ begins with $P_1$, and the final placement ends with $P_{N-1}$.

\subsection{The universal mixed-Eulerian factor}

The polynomial $E_c$ is the mixed-Eulerian polynomial of \cite[Definition 7.12]{BST23}.
With $a$ leading zero entries and $p-a$ trailing zero entries in $q^{(a)}$, its generating identity is
\begin{equation}\label{eq-ME-generating}
    E_c(t)=(1-t)^{n+1}\sum_{m\geq0}\prod_{j=1}^{\ell}(m+j)^{c_j}t^m.
\end{equation}
This is \cite[Theorem 7.13]{BST23}, which is also shown independently in \cite[Proposition 4.5]{NT23}. For example, when $n=3$, we have
\begin{equation*}
    E_{(2,1)}(t)=2+4t,\qquad E_{(1,2)}(t)=4+2t.
\end{equation*}

The properties of the universal factor in the following lemma are known.
Positivity is part of the mixed-Eulerian theory, and the value at one is a specialization of Postnikov's cyclic-sum identity \cite[Theorems 16.3--16.4]{Postnikov09}. See also \cite[Section 1.1]{BST23}.
Real rootedness follows from Brenti's criterion \cite[Theorem 4.4.1]{Brenti89} as recalled in \cite[Section 1]{KV26}, and also from \cite[Theorem 1]{KV26}.
We include a proof to fix the normalization and give the one-variable argument needed below.

\begin{lemma}\label{lemma-universal-factor}
    The polynomial $E_c$ has degree $n-\ell$ and positive coefficients, which satisfies $E_c(1)=n!$. If it is nonconstant, all its zeros are negative and real.
\end{lemma}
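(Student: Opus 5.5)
Degree and positivity come straight from the definition \eqref{eq-Ec}. Each $q^{(a)}$, $0\leq a\leq p=n-\ell$, is a weak composition of $n$ whose entries are the positive parts $c_j$ placed in positions $a+1,\dots,a+\ell$. So $e_a(c)=n!\MV_{E_n}(D_{a+1}^{(n)}[c_1],\dots,D_{a+\ell}^{(n)}[c_\ell])$ is a mixed volume of full-dimensional hypersimplices. Exactly as in the positivity argument of Theorem \ref{thm-main}, a small ball contained in translates of all entries shows by monotonicity that $e_a(c)>0$. In particular $e_p(c)>0$, so $\deg E_c=n-\ell$.

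For $E_c(1)=n!$, I will not invoke Postnikov's cyclic-sum identity directly. Instead I will use the generating identity \eqref{eq-ME-generating}. Set $g(m)=\prod_{j=1}^{\ell}(m+j)^{c_j}$, a polynomial in $m$ of degree $\sum_j c_j=n$ with leading coefficient $1$. By the standard theory of rational generating functions, $\sum_{m\geq0}g(m)t^m=h(t)/(1-t)^{n+1}$, and $h(1)$ equals $n!$ times the leading coefficient of $g$. This follows from the basis expansion $g(m)=\sum_k h_k\binom{m+n-k}{n}$, whose leading coefficient is $\sum_k h_k/n!$. Hence $E_c(1)=n!$. Alternatively, one can derive this internally: applying Theorem \ref{thm-structural} to a rank-one-like example, or summing \eqref{eq-structural} over $s$ and comparing with $\sum_i \MV_W(P_{i+1},\dots,P_{i+n})$, would also work. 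The generating-function argument is cleaner.

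For real-rootedness, I will write $E_c(t)/(1-t)^{n+1}=\sum_m g(m)t^m$ and use the operator $\mathcal{D}=t\frac{d}{dt}$. Starting from $\sum_m t^m = 1/(1-t)$, I apply the factors one at a time: multiplying the coefficient sequence by $(m+j)$ corresponds to $t^{-j+1}\frac{d}{dt}t^{j}$ acting on the series, i.e. $\mathcal{D}+j$. Thus $\sum_m g(m)t^m=\prod_j(\mathcal{D}+j)^{c_j}\frac{1}{1-t}$. By induction, after applying $k$ such operators the result is $Q_k(t)/(1-t)^{k+1}$. Applying $\mathcal{D}+j$ gives the recursion
\[
Q_{k+1}(t)=t(1-t)Q_k'(t)+\bigl((k+1)t+j(1-t)\bigr)Q_k(t).
\]
The inductive claim is that $Q_k$ has positive coefficients and only real nonpositive zeros, and that its zeros are simple or, when zero is a root, controlled. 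The standard interlacing argument then applies: at consecutive zeros $\xi$ of $Q_k$ in $(-\infty,0)$, the values $Q_{k+1}(\xi)=\xi(1-\xi)Q_k'(\xi)$ alternate in sign. Together with the signs at $0$ and at $-\infty$ (read off from the degree and the constant term $jQ_k(0)$), this locates $\deg Q_{k+1}$ real negative zeros. This is Brenti's criterion in concrete form.

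The main obstacle is the bookkeeping in this step. First, factors with $j$ appearing before the degree grows can produce a root at $0$ or multiple roots. Second, the degree count is subtle: the final polynomial has degree only $n-\ell$, not $n$. This is because the factors $(m+j)$ with $j\geq2$ force vanishing of the series at negative integers, which removes top-degree terms. I would handle both issues by tracking the sign of $Q_{k+1}$ at $0$ and its leading coefficient separately in each case. If a root sits at $0$, I would factor out $t$ before continuing the induction. If a multiple root occurs, I would perturb the operators and pass to the limit, using that real-rootedness is closed under limits and that positivity of the coefficients excludes nonnegative roots.
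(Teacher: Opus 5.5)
\textbf{Verdict: the real-rootedness step has a genuine gap; the degree, positivity and $E_c(1)=n!$ parts are correct.}

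Your degree and positivity argument matches the paper's. Your computation of $E_c(1)=n!$ via the basis $\binom{m+n-k}{n}$ is correct and equivalent to the paper's expansion $E_c(t)=\sum_k\xi_k t^k(1-t)^{n-k}$ in the basis $\binom{m}{k}$. Your aside about obtaining $E_c(1)=n!$ by summing \eqref{eq-structural} over $s$ is circular, because the total-sum identity is derived from $E_c(1)=n!$; but you do not need it.

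The gap is in the real-rootedness induction. You never fix the order in which the operators $\mathcal{D}+j$ are applied, and your invariant (positive coefficients, only real nonpositive zeros) is false for some orders. Write $Q_k=\sum_i a_it^i$ with degree $q$. Your recursion gives the coefficient of $t^i$ in $Q_{k+1}$ as $(i+j)a_i+(k+2-j-i)a_{i-1}$. So the new top coefficient is $(k+1-j-q)a_q$, which can vanish or be negative.

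Two examples show the failure:
\begin{itemize}
\item Applying $\mathcal{D}+2$ to $\frac{1}{1-t}$ first gives $Q_1=2-t$, which has a negative coefficient and a positive root, even though $E_{(1,1)}=2$.
\item For $c=(1,1,1)$ in the order $j=3,2,1$ you get $Q_1=3-2t$, then $Q_2=2t^2-6t+6$ with non-real zeros, and finally $Q_3=6$.
\end{itemize}
The literal reading of $\prod_j(\mathcal{D}+j)^{c_j}\frac{1}{1-t}$ applies $\mathcal{D}+\ell$ first, which is exactly a bad order. Meanwhile, the problems you plan to patch never occur. Since $Q_{k+1}(0)=jQ_k(0)>0$, zero is never a root. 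Factoring out $t$ or perturbing the operators does nothing against negative coefficients or non-real intermediate zeros.

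The missing idea is where to start the induction. The paper starts from the composition $(1,\dots,1)$, whose universal polynomial is the constant $\ell!$ because $\sum_m(m+1)\cdots(m+\ell)t^m=\ell!/(1-t)^{\ell+1}$. Only then does it raise parts one at a time. If the current composition has total $m$, then $\deg E_c=m-\ell$. In the recursion \eqref{eq-Ec-recurrence}, the top coefficient is $(\ell+1-j)a>0$ and the constant term is $jE_c(0)>0$, so the degree rises by exactly one. The values $x_i(1-x_i)E_c'(x_i)$ at the simple negative zeros alternate in sign. Together with the signs at $-\infty$ and at $0$, this gives one zero in each of $(-\infty,x_1),(x_1,x_2),\dots,(x_q,0)$. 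No roots at zero, no multiple roots, and no limiting argument arise.

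Within your own framework, applying the factors in increasing order, $(\mathcal{D}+1)^{c_1}$, then $(\mathcal{D}+2)^{c_2}$, and so on, also works:
\begin{itemize}
\item The first use of each new $j$ is a degree-preserving step with all coefficients positive. The same interlacing, without the interval $(-\infty,x_1)$, suffices there.
\item Every other step raises the degree by one, with top coefficient $a_q>0$.
\end{itemize}
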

\begin{proof}
    Each hypersimplex in the mixed volume defining $e_a(c)$ is full-dimensional in $E_n$.
    Therefore, $e_a(c)>0$ for $0\leq a\leq p$, which proves positivity and the degree assertion.
    To compute the value at one, we write the monic polynomial
    \begin{equation*}
        \Phi_c(m)=\prod_{j=1}^{\ell} (m+j)^{c_j}=\sum_{k=0}^n \xi_k \binom{m}{k}.
    \end{equation*}
    Since $\binom{m}{n}$ has leading coefficient $1/n!$, the comparison of leading coefficients gives $\xi_n=n!$.
    The identity
    \begin{equation*}
        \sum_{m\geq0} \binom{m}{k}t^m=\frac{t^k}{(1-t)^{k+1}}
    \end{equation*}
    then gives \eqref{eq-ME-generating} in the form
    \begin{equation}\label{eq-Ec-polynomial}
        E_c(t)=\sum_{k=0}^n \xi_k t^k(1-t)^{n-k}.
    \end{equation}
    At $t=1$, every term with $k<n$ vanishes and the term with $k=n$ is $\xi_n$.
    Therefore $E_c(1)=\xi_n=n!$.

    For real rootedness, consider the universal polynomials for all positive compositions with fixed length $\ell$.
    The initial composition $c=(1,\dots,1)$ has total $\ell$ and gives the constant polynomial $\ell!$.
    Increase its parts one at a time.
    For a current composition of total $m$, multiplying $\Phi_c(q)$ by $q+j$ in the generating series gives
    \begin{equation*}
        \sum_{q\geq 0}\Phi_{c+e_j}(q)t^q=(t\frac{d}{dt}+j)\frac{E_c(t)}{(1-t)^{m+1}}.
    \end{equation*}
    Multiplying by $(1-t)^{m+2}$ gives
    \begin{equation}\label{eq-Ec-recurrence}
        E_{c+e_j}(t)=t(1-t)E_c'(t)+(j+(m+1-j)t) E_c(t),\quad 1\leq j\leq\ell.
    \end{equation}
    This is the differential recurrence underlying \cite[proof of Theorem 7.13]{BST23}. The associated interlacing argument is also a specialization of \cite[Lemma 1]{KV26}.

    Suppose $E_c$ has degree $q=m-\ell$ and positive leading coefficient $a$.
    The leading coefficient on the right of \eqref{eq-Ec-recurrence} is $(\ell+1-j)a>0$, and its constant term is $jE_c(0)>0$.
    For $q=0$, the new polynomial is linear with positive coefficients and has one negative zero.
    For $q>0$, assume inductively that $E_c$ has distinct negative zeros $x_1<\cdots<x_q$.
    At $x_i$, the new polynomial has value $x_i(1-x_i)E_c'(x_i)$ of sign $(-1)^{q-i+1}$, where the signs alternate.
    The sign at $-\infty$ is $(-1)^{q+1}$, opposite to the sign at $x_1$, and the value at zero is positive, opposite to the sign at $x_q$.
    There is therefore a zero in each of
    \begin{equation*}
        (-\infty,x_1),\, (x_1,x_2),\dots,(x_{q-1},x_q),\, (x_q,0).
    \end{equation*}
    The degree is $q+1$, so these are all the zeros and they are distinct and negative.
    Induction then proves the assertion for all positive compositions, including those of total $n=d-1$.
\end{proof}

To check the cited real-rootedness criterion directly in this setting, the zeros of $\Phi_c(z)$ are exactly $-1,-2,\dots,-\ell$, with multiplicities $c_1,\dots,c_\ell$.
They contain every integer between the smallest root and $-1$, as required by Brenti's criterion in the form recalled in \cite[Section 1]{KV26}.
Equivalently, in \cite[Theorem 1]{KV26}, take $a=1$ and $F(z)=\Phi_c(z)/\Phi_c(0)$.
The zeros of $F(-z)$ are $1,\dots,\ell$ and satisfy its condition (15); the numerator in its equation (3) is $E_c(t)/\Phi_c(0)$.
Thus the proof above verifies the known properties of the universal factor directly. We use Theorem \ref{thm-structural} to obtain the following factorization for flat arrangements.

\subsection{Factorization, total sum, and log-concavity}

\begin{theorem}\label{thm-factorization}
    For every positive decomposition $c$ of $n$, we have
    \begin{equation}
        \label{eq-factorization}
        n!S_c(t)=E_c(t)f_A(t).
    \end{equation}
    In particular,
    \begin{equation}
        \label{eq-degrees}
        \op{deg} S_c=r+n-\ell=N-\ell-1,
    \end{equation}
    and
    \begin{equation}
        \label{eq-conservation}
        \sum_{i=0}^{N-\ell-1} M_i(c)=f_A(1)=\sum_{B\in\mathcal{B}} w_A(B).
    \end{equation}
    Therefore, the total sum of the consecutive mixed volumes is independent of $c$.
\end{theorem}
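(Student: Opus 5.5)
The factorization is a coefficient comparison that rests on Theorem \ref{thm-structural}. Fix $c$ and write $p=n-\ell$. For each placement index $0\leq i\leq N-\ell-1$, set $s=i+1$, so that $1\leq s\leq N-\ell$ as Theorem \ref{thm-structural} requires. The list $P_{i+1}[c_1],\dots,P_{i+\ell}[c_\ell]$ is exactly the list in \eqref{eq-structural} with this $s$. Using the notation $e_a(c)=\ME(q^{(a)})$ from \eqref{eq-Ec}, the theorem gives
\begin{equation*}
    n!\,M_i(c)=\sum_{a=0}^{p} e_a(c)\,\alpha_{i-a},
\end{equation*}
with the zero extension of the $\alpha_j$ outside $\{0,\dots,r\}$. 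The right-hand side is the coefficient of $t^i$ in $E_c(t)f_A(t)=\sum_a e_a(c)t^a\sum_j\alpha_jt^j$. This proves \eqref{eq-factorization} coefficientwise for $0\leq i\leq N-\ell-1$.

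I then need to check that the product has no coefficients outside this range, so that the two polynomials agree entirely and not just on a window. The product has degree at most $p+r=n-\ell+r$, and since $N=r+n+1$ this equals $N-\ell-1$. Its lowest index is $0$. So every coefficient of $E_cf_A$ is accounted for by some $M_i(c)$, and conversely every $M_i(c)$ appears. This window check is the only place where care is needed: the admissible range of $s$ in Theorem \ref{thm-structural}, the zero extension of the coefficients, and the relation $N=r+n+1$ must line up exactly, and they do.

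For the degree statement \eqref{eq-degrees}, I use Lemma \ref{lemma-universal-factor}, which says $E_c$ has degree $n-\ell$ with positive leading coefficient $e_p(c)$, and Theorem \ref{thm-main}, which gives $\deg f_A=r$ with $\alpha_r>0$. The leading coefficient of the product is then $e_p(c)\alpha_r>0$, so $\deg S_c=r+n-\ell=N-\ell-1$.

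For \eqref{eq-conservation}, I evaluate \eqref{eq-factorization} at $t=1$ and use $E_c(1)=n!$ from Lemma \ref{lemma-universal-factor}. This gives $n!\sum_iM_i(c)=n!f_A(1)$. Finally, $f_A(1)=\sum_{B\in\mathcal{B}}w_A(B)$ by the definition \eqref{eq-polynomial}, so the total is independent of $c$.
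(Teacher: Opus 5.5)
Your proposal is correct and follows the paper's proof essentially step for step. You apply Theorem \ref{thm-structural} with $s=i+1$ to get the convolution $n!M_i(c)=\sum_a e_a(c)\alpha_{i-a}$, check that the window matches via $r+p=N-\ell-1$, get the degree from the positive leading coefficients supplied by Lemma \ref{lemma-universal-factor} and Theorem \ref{thm-main}, and evaluate at $t=1$ using $E_c(1)=n!$.
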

\begin{proof}
    Theorem \ref{thm-structural} with $s=i+1$ gives
    \begin{equation}
        \label{eq-convolution}
        n!M_i(c)=\sum_{a=0}^p e_a(c)\alpha_{i-a}.
    \end{equation}
    Since $r+p=N-\ell-1$, the support of the convolution is exactly the range in \eqref{eq-Mi}.
    Multiplying generating polynomials proves \eqref{eq-factorization}, including both endpoint coefficients.
    Theorem \ref{thm-main} and Lemma \ref{lemma-universal-factor} then give the degree assertion.

    To prove the total sum identity, we evaluate \eqref{eq-factorization} at $t=1$. By Lemma \ref{lemma-universal-factor}, we have $E_c(1)=n!$, and therefore
    \begin{equation*}
        n!\sum_{i=0}^{N-\ell-1} M_i(c)=n!S_c(1)=E_c(1)f_A(1)=n!f_A(1).
    \end{equation*}
    Canceling $n!$ gives the first equality in \eqref{eq-conservation}.
    The defining basis sum \eqref{eq-polynomial} gives $f_A(1)=\sum_B w_A(B)$ since each activity monomial takes value one at $t=1$.
    This proves the second equality and the independence of the total sum from $c$.
\end{proof}

\begin{corollary}\label{cor-sequence-LC}
    For every positive decomposition $c$ of $n$, the sequence in \eqref{eq-Mi} is positive and log-concave:
    \begin{equation}
    \label{eq-sequence-LC}
        M_i(c)^2\geq M_{i-1}(c)M_{i+1}(c), \qquad 1\leq i\leq N-\ell-2.
    \end{equation}
\end{corollary}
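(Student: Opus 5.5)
The plan is to derive log-concavity from the factorization in Theorem \ref{thm-factorization}, using the classical fact that a product of log-concave polynomials with positive coefficients and no internal zeros is again log-concave. Equation \eqref{eq-factorization} gives $n!S_c(t)=E_c(t)f_A(t)$, and the coefficient sequence of $S_c$ is $(M_i(c))_{i=0}^{N-\ell-1}$, where $\deg S_c=N-\ell-1$ by \eqref{eq-degrees}. It therefore suffices to show that $E_c$ and $f_A$ each have positive, log-concave coefficient sequences with no internal zeros, and then to invoke closure of this class under convolution.

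For $f_A$, Theorem \ref{thm-main} gives positivity of $\alpha_0,\ldots,\alpha_r$ and log-concavity; in rank $d\geq2$ this is exactly the setting of this section. For $E_c$, Lemma \ref{lemma-universal-factor} gives positive coefficients $e_0(c),\ldots,e_p(c)$, and either $E_c$ is constant or all its zeros are real and negative. In the real-rooted case, Newton's inequalities show that $(e_a(c))$ is log-concave; in fact it is ultra-log-concave. The constant case is trivial.

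The remaining step is the convolution lemma. If $(x_a)$ and $(y_b)$ are positive log-concave sequences on intervals, then $z_i=\sum_a x_a y_{i-a}$ is positive and log-concave. This is standard (Hoggar's theorem, or via total positivity of order two of the Toeplitz matrices). If a self-contained argument is wanted, the simplest route here is to exploit real-rootedness. Write $E_c(t)=e_p(c)\prod_k (t+\lambda_k)$ with $\lambda_k>0$, so that multiplication by $E_c$ amounts to repeated convolution with two-term positive sequences $(\lambda_k,1)$. For a single factor, $z_i=\lambda y_i+y_{i-1}$, and
\[
z_i^2-z_{i-1}z_{i+1}=\lambda^2(y_i^2-y_{i-1}y_{i+1})+\lambda(y_iy_{i-1}-y_{i-2}y_{i+1})+(y_{i-1}^2-y_{i-2}y_i),
\]
with the zero extension. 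Each bracket is nonnegative for a positive log-concave sequence without internal zeros: the middle term holds because $y_i/y_{i-1}\geq y_{i+1}/y_{i-2}\cdot(y_{i-2}/y_{i-1})\cdots$, i.e.\ the ratios $y_{j}/y_{j-1}$ are nonincreasing, so $y_iy_{i-1}\geq y_{i-2}y_{i+1}$. Iterating over the factors and using positivity of $z$ on its full support proves \eqref{eq-sequence-LC}. Positivity of every $M_i(c)$ can also be checked directly, as in the proof of Theorem \ref{thm-main}, since all entries $P_{i+j}$ with $1\leq i+j\leq N-1$ are full-dimensional by Lemma \ref{lemma-projected-properties}.

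The main point of care is the boundary indices in the convolution identity, namely making sure the zero extension does not break the middle-term inequality at the support endpoints. This works because the coefficients are positive on a full interval and vanish outside it, so the ratio monotonicity argument extends with the conventions $y_{-1}=0$ and $y_{r+1}=0$. No Alexandrov--Fenchel input beyond what is already contained in Theorem \ref{thm-main} should be needed.
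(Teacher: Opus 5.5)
Your proposal is correct and follows the paper's proof: positivity of the $M_i(c)$ from full-dimensionality of the $P_k$, log-concavity of the coefficients of $E_c$ from real-rootedness and Newton's inequalities (Lemma~\ref{lemma-universal-factor}), log-concavity of the coefficients of $f_A$ from Theorem~\ref{thm-main}, and closure of interval-supported nonnegative log-concave sequences under convolution, applied to the factorization $n!S_c=E_cf_A$. Your optional factor-by-factor argument is a valid self-contained replacement for the cited convolution theorem, but the displayed ratio chain justifying the middle bracket is garbled as written; the clean justification is that $y_{i-1}/y_{i-2}\geq y_i/y_{i-1}\geq y_{i+1}/y_i$ gives $y_{i-1}y_i\geq y_{i-2}y_{i+1}$ when all four terms are positive, while otherwise the interval support forces $y_{i-2}y_{i+1}=0$.
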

\begin{proof}
    The full-dimensionality of every $P_k$ gives $M_i(c)>0$ throughout the range.
    Lemma \ref{lemma-universal-factor} and Newton's inequalities imply that the coefficients of $E_c$ form a log-concave sequence.
    Theorem \ref{thm-main} gives log-concavity of the coefficients of $f_A$.
    Their zero extensions are nonnegative log-concave sequences with interval support.
    Since the convolution preserves this property \cite[Theorem 1.4]{JG06}, \eqref{eq-convolution} then proves \eqref{eq-sequence-LC}.
\end{proof}

\begin{proof}[Proof of Theorem \ref{thm-sequences-intro}]
    The definitions of $E_c$, $M_i(c)$, and $S_c$ in this section agree with those in the introduction by \eqref{eq-ME-generating} and \eqref{eq-Mi}.
    Lemma \ref{lemma-universal-factor} gives the asserted degree and positive coefficients of $E_c$, and shows that its zeros are negative and real when it is nonconstant.
    Theorem \ref{thm-factorization} gives the factorization and the total sum identity, and Corollary \ref{cor-sequence-LC} gives positivity and log-concavity of the entire sequence $(M_i(c))_{i=0}^{N-\ell-1}$.
    This proves Theorem \ref{thm-sequences-intro}.
\end{proof}

\subsection{Relation to earlier formulas}\label{subsection-earlier-formulas}

We first consider the two extreme choices of multiplicities.
For $c=(1,\dots,1)$, the factor is $E_c=n!$, and Theorem \ref{thm-factorization} is exactly the consecutive coefficient representation.
For $c=(n)$, it gives
\begin{equation}
\label{eq-scalar-slices}
    n!\sum_{i=0}^{N-2}\op{vol}_W(P_{i+1})t^i=A_n(t)f_A(t),
\end{equation}
where $A_n(t)=E_{(n)}(t)$ is the ordinary Eulerian polynomial, which is normalized so that the coefficient of $t^a$ counts permutations of $[n]$ with $a$ descents.

The scalar-slice identity \eqref{eq-scalar-slices} is already contained in the zonotopal slice calculation of Galashin--Postnikov--Williams \cite[Lemma 3.11 and the proof of Proposition 4.1]{GPW22}.
To match conventions, we choose coordinates in $V$ that carry $\varphi$ to the last coordinate and $|\omega_V|$ to the coordinate density.
Then their horizontal slice $Q_k$ is a translate of $P_k$ with the same induced volume.
For the regular tiling determined by $\rho$, their Lemma 5.13 and Proposition 6.2 identify the tile of a basis $B$ with $\Pi_{\Ext_\rho(B),B}$.
Thus their $\gamma_j$ is exactly $\alpha_j$ in the present sign convention.
Let $\genfrac{\langle}{\rangle}{0 pt}{}{n}{a}$ denote an Eulerian number, their tile-slice formula consequently reads
\begin{equation}
\label{eq-GPW-slice}
    \op{vol}_W(P_{i+1})=\frac{1}{n!}\sum_{a=0}^{n-1}\genfrac{\langle}{\rangle}{0pt}{}{n}{a}\alpha_{i-a}.
\end{equation}
The factor $1/n!$ comes from their equation (3.9), with ambient dimension $d=n+1$.
Multiplying generating polynomials gives \eqref{eq-scalar-slices} with no change of variable or reversal of the coefficients.
The log-concavity of these single-body volumes also follows directly from \eqref{eq-midpoint-inclusion} and the Brunn--Minkowski inequality \cite[Section 7.1]{Schneider14}.

For an arbitrary positive composition, \eqref{eq-factorization} is the extension from slice volumes to consecutive mixed volumes supplied by Theorem \ref{thm-structural}.
The universal mixed-Eulerian identities remain those of \cite{BST23,NT23}.
For comparison, \cite[Theorem 10.2]{BST23} concerns a loopless matroid $M$ of rank $n+1$ on a ground set of size $N$ and its hypersimplex classes $\gamma_j$ in the matroid Chow ring.
In our dimension and index notation, their formula is
\begin{equation*}
    \sum_{i=0}^{N-\ell-1}\deg_M(\gamma_{i+1}^{c_1}\cdots\gamma_{i+\ell}^{c_\ell})t^i=E_c(t)T_M(1,t),
\end{equation*}
where $\deg_M$ is the degree map on products of total degree $n$.
The present formula instead concerns $n!$ times actual mixed volumes of projected polytopes and the determinant-weighted external semi-activity polynomial $f_A$.
Although the two factorizations have the same form, the two enumerators need not agree.
Our formula applies to non-unimodular flat real arrangements. The conclusion about the coefficient shape then follows by Theorem \ref{thm-main}.

\section{Eulerian digraphs and positive circulation weights}
\label{section-graphs}

\subsection{Graph polynomials and cographic arrangements}

We start by recalling some basic notions about digraphs, following \cite[Sections 1.2 and 1.6--1.7]{BG09}. All graphs in this section are finite directed multigraphs. The underlying undirected multigraph is obtained by forgetting directions but keeping parallel arcs as distinct edges. Connectedness refers to this underlying multigraph.
A connected digraph is \emph{Eulerian} if the in-degree and out-degree agree at every vertex.
Directed loops occur in no spanning tree and contribute equally to both degrees, so they are deleted before the cycle rank is defined.
For a loopless connected digraph $D$, let
\begin{equation*}
    \nu=|V(D)|,\quad m=|E(D)|,\quad\beta(D)=m-\nu+1.
\end{equation*}

A spanning tree means a spanning tree of the underlying labeled undirected multigraph. Once a root $u_0$ is fixed, there is a unique orientation of that tree toward $u_0$.
Let $\kappa_{u_0}(T)$ be the number of arcs whose given orientations disagree with it.
For an Eulerian digraph, the Murasugi--Stoimenow polynomial is given by
\begin{equation}
\label{eq-unweighted-poly}
    P_D(t)=\sum_T t^{\kappa_{u_0}(T)}=\sum_{i=0}^{\nu-1} c_i(D)t^i,
\end{equation}
which is independent of $u_0$ \cite[Proposition 1]{MS03}.
The weighted argument below will also prove root independence directly.
For the trivial digraph with one vertex and no edges, the empty tree contributes $1$.

We now recall graphic and cographic matroids from \cite[Chapters 1--2]{Oxl11}, using the oriented-matroid conventions of \cite{BLSWZ99}.
The graphic matroid of the underlying graph has the forests as its independent sets and the spanning trees as its bases. Its \emph{oriented graphic matroid} is represented by the signed incidence matrix $\partial$, whose column for an arc has $-1$ at its tail and $+1$ at its head.
Equivalently, a signed circuit records a traversal of an undirected cycle, with signs indicating agreement or disagreement with the given arc directions.
The cographic matroid is its dual: its bases are the complements of spanning trees, and its signed circuits are the signed minimal cuts.
A matrix $C$ whose rows form a basis of $\ker\partial$ represents the oriented cographic matroid.
The representations used here may be chosen totally unimodular, meaning that every square minor is $0$, $1$, or $-1$.

More explicitly, we choose a spanning tree $T_0$ and delete one row of $\partial$ to get $\partial'$.
After ordering the tree arcs first and writing $S_0=E(D)\setminus T_0$, the matrix
\begin{equation}
\label{eq-cographic-matrix}
    C=[-((\partial'_{T_0})^{-1}\partial'_{S_0})^T \mid I_{\beta(D)}]
\end{equation}
has row space $\ker\partial$ and is the standard fundamental cycle representation.
The standard dual-representation construction is given in \cite[Theorem 2.2.8]{Oxl11}, where its total unimodularity is discussed in \cite[p. 139]{Oxl11}.
In the notation of the graph-polynomial literature, this is the cographic matrix of \cite[Definition 4.8 and Theorem 4.9]{KMP25}, up to column ordering and a choice of signs for the row basis. The maximal nonzero minors have absolute value one.
We use the standard coordinate density on the $\beta(D)$-dimensional space containing its columns.

\begin{corollary}\label{cor-Eulerian}
    Let $D$ be a connected Eulerian digraph with loops deleted, and let $\beta=\beta(D)$. The coefficients of $P_D$ are positive and palindromic. For $\beta\geq1$,
    \begin{equation}
        \label{eq-Eulerian-quadratic}
        \beta c_i(D)^2\geq c_i(D)(c_{i-1}(D)+c_{i+1}(D))+(\beta-2)c_{i-1}(D)c_{i+1}(D)
    \end{equation}
    at every internal index $1\leq i\leq\nu-2$.
    They are log-concave, where equality holds exactly at a locally constant triple.
    For $\beta\geq2$, their $1/(\beta-1)$-st powers are concave, where equality holds only at a locally constant triple when $\beta\geq3$.
    For $\beta=1$, all coefficients are equal. For $\beta=0$, $P_D=1$.
\end{corollary}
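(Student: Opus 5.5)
The plan is to realize $P_D$ as the flat-arrangement polynomial $f_C$ of the cographic matrix $C$ in \eqref{eq-cographic-matrix}. Then Theorem \ref{thm-main} and Corollary \ref{cor-power} apply with $d=\beta$ and $r=m-\beta=\nu-1$.

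\textbf{Flatness.} The columns of $C$ span $\R^{\beta}$, since $C$ contains $I_\beta$. Its row space is $\ker\partial$, the cycle space. The Eulerian condition says exactly that the all-ones vector $\one\in\R^{E}$ satisfies $\partial\one=0$, so $\one$ lies in the row space of $C$. Write $\one^T=\varphi^T C$ for some $\varphi\in(\R^\beta)^*$. Then $\varphi(c_e)=1$ for every column $c_e$, so $C$ is a flat arrangement of rank $d=\beta$ whenever $\beta\geq1$.

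\textbf{Bases and weights.} The bases of $C$ are the complements $E\setminus T$ of spanning trees $T$. By total unimodularity, every weight $w_C(E\setminus T)$ equals $1$.

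\textbf{Matching activity with $\kappa_{u_0}$.} This is the step I expect to be the main obstacle. The circuits of $C$ are the signed minimal cuts. For $B=E\setminus T$ and $j\in T$, the fundamental circuit $c^{B,j}$ is the signed fundamental cut of $j$ with respect to $T$, normalized so that $c_j=1$. I would choose $\rho$ concentrated near the root. Concretely, take $\rho=\varepsilon$-perturbation of $\partial^T\delta$, where $\delta$ gives a large weight to $u_0$ and small generic weights to the other vertices.

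For a cut $z$ separating a vertex set $X$ from its complement, $\langle\partial^T\delta,z\rangle$ is, up to sign, the $\delta$-mass of $X$. With the weight at $u_0$ dominant, the sign of $\langle\rho,c^{B,j}\rangle$ depends only on which side of the fundamental cut of $j$ contains $u_0$, relative to the orientation of $j$. Equivalently, it records whether $j$ points toward $u_0$ in $T$. Hence $\ext_\rho(B)=\kappa_{u_0}(T)$, or $m-\ldots$ complemented, in which case I would flip the sign of $\rho$ or use palindromicity.

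Circuit-genericity must also be checked. The cut values are sums of $\delta$ over vertex subsets, so a generic $\delta$ with one dominant entry gives nonzero values. One caveat: $\partial^T\delta$ pairs to zero with cycles, but circuits of $C$ are cuts, not cycles, so this causes no problem.

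\textbf{Conclusion.} Consequently $f_C=P_D$. Root-independence also follows, from the $\rho$-independence in Theorem \ref{thm-representation}.

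\textbf{Applying the main results.} Positivity, palindromicity, \eqref{eq-Eulerian-quadratic}, log-concavity and the equality criterion come from Theorem \ref{thm-main} with $d=\beta$. Power concavity with exponent $1/(\beta-1)$ for $\beta\geq2$, and its equality case for $\beta\geq3$, come from Corollary \ref{cor-power}. The case $\beta=1$ is the rank-one case \eqref{eq-rank-one}.

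\textbf{The case $\beta=0$.} Here $D$ is a tree. A tree has no directed cycle, and the Eulerian condition then forces no arcs, so $\nu=1$ and $P_D=1$.
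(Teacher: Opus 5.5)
Your outline is sound and matches the paper: flatness from $\partial\one=0$, unit weights by total unimodularity, then Theorem \ref{thm-main} and Corollary \ref{cor-power} with $d=\beta$, $r=\nu-1$, plus the $\beta=0$ argument. The gap is the activity step, which the paper takes from \cite[Theorem 5.3]{KMP25}; it fails as you set it up. Your claim that $\langle\partial^T\delta,z\rangle$ is the $\delta$-mass of $X$ is false. For $z=\partial^T\one_X$,
\[
\langle\partial^T\delta,\partial^T\one_X\rangle=\delta^T\partial\partial^T\one_X=\sum_{\{x,y\}}(\delta_x-\delta_y),
\]
summed over the arcs crossing the cut, with $x\in X$ and $y\notin X$. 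This is a Laplacian pairing. The dominant weight at $u_0$ enters only through crossing arcs incident to $u_0$. For a fundamental cut with no such arc, the sign is set by the small weights and is unrelated to which side contains $u_0$.

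Concretely, take the directed $4$-cycle $v_0\to v_1\to v_2\to v_3\to v_0$ with root $v_0$. The set $X=\{v_0,v_1,v_3\}$ is the fundamental cut of $v_1\to v_2$ in $T=E\setminus\{v_2\to v_3\}$, and of $v_2\to v_3$ in $T'=E\setminus\{v_1\to v_2\}$. The pairing is $\delta_{v_1}+\delta_{v_3}-2\delta_{v_2}$, independent of $\delta_{v_0}$. Choose the sign of $\delta_{v_0}$ so that the cuts adjacent to $v_0$ come out right, and suppose this quantity is positive. Then $\ext_\rho(E\setminus T)=1$ while $\kappa_{v_0}(T)=2$, and $\ext_\rho(E\setminus T')=2$ while $\kappa_{v_0}(T')=1$. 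Neither $\rho$ nor $-\rho$ reproduces $\kappa_{v_0}$ or $\nu-1-\kappa_{v_0}$ basis by basis, so flipping signs or invoking palindromicity does not help. The polynomials happen to agree here, but only by $\rho$-independence. You may use that only after the identity is proved for at least one $\rho$.

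The mass formula you want holds for a flow, not a potential. Choose $\eta\in\R^{V(D)}$ with $\eta_v>0$ for $v\neq u_0$ and $\eta_{u_0}=-\sum_{v\neq u_0}\eta_v$. Take $\rho$ with $\partial\rho=\eta$, which exists because $D$ is connected. Then $\langle\rho,\partial^T\one_X\rangle=\one_X^T\partial\rho=\eta(X)$. This is nonzero for every nonempty proper $X$, so $\rho$ is circuit-generic, and it is negative whenever $u_0\in X$. Write $c^{S,j}=q/q_j$ with $q=\partial^T\one_R$, as in the proof of Lemma \ref{lemma-tree-activity}. Then $j$ is active iff $q_j=-1$, i.e., iff $j$ leaves $R$, which is exactly when $j$ must be reversed. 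Hence $\ext_\rho(E\setminus T)=\kappa_{u_0}(T)$. With this repair your argument is complete and self-contained. It also avoids the cut order of Lemma \ref{lemma-rooted-cut-order}, which the paper needs for its lexicographic $\rho$ in the weighted case.
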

\begin{proof}
    For $\beta\geq1$, let $C$ be \eqref{eq-cographic-matrix}.
    Eulerian balance gives $\one\in\ker\partial=\op{rowspan}(C)$, so a linear functional takes value one on every column of $C$.
    Thus its columns form a flat arrangement of rank $\beta$.
    Its basis weights are all one, and the cographic activity identity gives
    \begin{equation}
        \label{eq-unweighted-bridge}
        P_D(t)=f_C(t)
    \end{equation}
    by \cite[Theorem 5.3]{KMP25}. The corank is $m-\beta=\nu-1$.
    We apply Theorem \ref{thm-main} with $d=\beta$ and $r=\nu-1$, and Corollary \ref{cor-power} when $\beta\geq2$.
    This gives all assertions, including the rank-one case from \eqref{eq-rank-one}.
    A nontrivial connected loopless Eulerian digraph has at least one outgoing arc at each vertex, hence $m\geq\nu$ and $\beta\geq1$.
    Therefore, $\beta=0$ is exactly the trivial case of one vertex without edges.
\end{proof}

Corollary \ref{cor-Eulerian} proves \cite[Conjecture 1.6]{HMV25}.
For $\beta=2$, its power-concavity conclusion is the ordinary additive inequality $2c_i\geq c_{i-1}+c_{i+1}$.
We use determinant weights to extend this result to real circulation weights.

\subsection{Weighted spanning trees and the Laplacian}

Throughout this subsection and the next, $D$ is a connected directed multigraph equipped with positive real arc weights $w_e$ satisfying the circulation equations \eqref{eq-balance-intro}.
Its unweighted in-degrees and out-degrees need not agree.
We use the weighted tree polynomial defined in \eqref{eq-PDw-intro}.
Delete loops and merge each class of arcs with the same ordered endpoints into a single arc whose weight is the sum of the class weights.
Denote the resulting weighted digraph by $D_{\red}$, with arc set $E_{\red}$, and set
\begin{equation*}
    m_{\red}=|E_{\red}|,\quad \beta_{\red}=m_{\red}-\nu+1.
\end{equation*}
The tree polynomial is unchanged: a tree uses at most one arc from a same-oriented parallel class, its reversal statistic is independent of the choice in that class, and summing the possible choices replaces their weights by their sum.
Anti-parallel arcs are not merged since they have different reversal statistics.
This reduction preserves the circulation equations.

For distinct vertices $u,v$, define the weighted out-Laplacian by
\begin{equation}
    \label{eq-Laplacian}
    (L_w)_{uv}=-\sum_{e:u\to v}w_e,\quad (L_w)_{uu}=\sum_{\op{tail}(e)=u}w_e,
\end{equation}
where all loops have already been deleted.
The row sums are zero by construction and the column sums are zero by circulation balance.
Let $\overline{L}_w$ be the principal submatrix obtained by deleting the row and column of a root $u_0$.

For $t>0$, form a weighted digraph $\widehat{D}_w(t)$ as follows.
For each labeled arc $e:u\to v$, keep a forward copy $e^+:u\to v$ of weight $w_e$ and add a reverse copy $e^-:v\to u$ of weight $tw_e$.
The copies arising from different original arcs remain distinct, including when the original arcs are anti-parallel.
We call $\widehat{D}_w(t)$ the doubled digraph.
An \emph{in-arborescence} rooted at $u_0$ is a spanning tree whose arcs point toward $u_0$.

\begin{lemma}\label{lemma-doubled-laplacian}
    The out-Laplacian of $\widehat{D}_w(t)$ is $L_w+tL_w^T$. Its row sums and column sums are zero.
\end{lemma}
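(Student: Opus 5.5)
The plan is to compute the out-Laplacian of $\widehat{D}_w(t)$ directly from the definition \eqref{eq-Laplacian}, one entry at a time. Since $\widehat{D}_w(t)$ has no loops (all loops of $D$ were deleted before constructing it), the definition applies verbatim.

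First consider an off-diagonal entry, with $u\neq v$. The arcs $u\to v$ in $\widehat{D}_w(t)$ come in two kinds. There are forward copies $e^+$ of original arcs $e:u\to v$, each of weight $w_e$. There are also reverse copies $e^-$ of original arcs $e:v\to u$, each of weight $tw_e$. Hence
\begin{equation*}
    -\sum_{e:u\to v}w_e-t\sum_{e:v\to u}w_e=(L_w)_{uv}+t(L_w)_{vu}=(L_w+tL_w^T)_{uv}.
\end{equation*}
Next consider a diagonal entry. The arcs with tail $u$ are the forward copies of original arcs with tail $u$ and the reverse copies of original arcs with head $u$. The diagonal entry is therefore
\begin{equation*}
    \sum_{\op{tail}(e)=u}w_e+t\sum_{\op{head}(e)=u}w_e.
\end{equation*}
By the circulation equations \eqref{eq-balance-intro}, the second sum equals $\sum_{\op{tail}(e)=u}w_e=(L_w)_{uu}$. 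The entry is thus $(1+t)(L_w)_{uu}=(L_w+tL_w^T)_{uu}$.

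For the row and column sums, observe that $L_w$ has zero row sums by construction and zero column sums by circulation balance, as noted after \eqref{eq-Laplacian}. Transposition exchanges these two properties, so $L_w^T$ also has zero row and column sums. The same then holds for the linear combination $L_w+tL_w^T$. Alternatively, the zero row sums follow from the construction of any out-Laplacian, and the column sums follow because each arc contributes $-w$ in its head's column and $+w$ in its tail's column.

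The only step requiring care is the diagonal entry. This is the one place where the circulation hypothesis is used: without balance, the diagonal would be the weighted out-degree plus $t$ times the weighted in-degree, which need not equal $(1+t)(L_w)_{uu}$. Separately, one should check that anti-parallel original arcs cause no double counting. They don't, because each labeled copy $e^\pm$ contributes exactly once to the sums above.
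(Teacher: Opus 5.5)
Your proposal is correct and follows the paper's proof essentially step for step. You compare off-diagonal entries directly, use circulation balance to turn $d_w^+(u)+t\,d_w^-(u)$ into $(1+t)(L_w)_{uu}$ on the diagonal, and inherit the zero row and column sums from $L_w$ and $L_w^T$.

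One small quibble: balance is not used only on the diagonal. The zero column sums of $L_w$, which you invoke to get zero row sums for $L_w^T$, also come from it. Likewise, your alternative per-arc argument for the column sums tacitly needs the weighted in- and out-degrees of $\widehat{D}_w(t)$ to agree, which again follows from balance.
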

\begin{proof}
    Let $d_w^+(u)=\sum_{\op{tail}(e)=u}w_e$ and $d_w^-(u)=\sum_{\op{head}(e)=u}w_e$.
    For distinct $u,v$, the corresponding entry of the doubled out-Laplacian is
    \begin{equation*}
        -\sum_{e:u\to v}w_e-t\sum_{e:v\to u}w_e=(L_w)_{uv}+t(L_w)_{vu}.
    \end{equation*}
    Its diagonal entry at $u$ is $d_w^+(u)+td_w^-(u)$.
    The circulation balance gives $d_w^-(u)=d_w^+(u)$, so this diagonal entry equals $(1+t)(L_w)_{uu}$.
    This proves the matrix identity and shows where circulation balance is used.
    Both $L_w$ and $L_w^T$ have zero row and column sums, so the same holds for their sum above.
\end{proof}

The next formula is a weighted application of the directed Matrix-Tree theorem \cite{Chaiken82}. The unweighted Eulerian case is given in \cite[Lemma 5.1]{HMV25} and attributed there to Murasugi and Stoimenow \cite{MS03}.

\begin{lemma}\label{lemma-matrix-tree}
    The polynomial
    \begin{equation}
        \label{eq-weighted-det}
        P_{D,w}(t)=\det(\overline{L}_w+t\overline{L}_w^{\,T})
    \end{equation}
    is independent of the root and equals
    \begin{equation}
        \label{eq-weighted-trees}
        P_{D,w}(t)=\sum_T\left(\prod_{e\in T}w_e\right)t^{\kappa_{u_0}(T)}.
    \end{equation}
    Here the trees may be counted either before reduction or with the aggregated weights after reduction.
    The polynomial satisfies $t^{\nu-1}P_{D,w}(t^{-1})=P_{D,w}(t)$.
\end{lemma}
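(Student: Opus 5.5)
The plan is to deduce everything from the weighted directed Matrix-Tree theorem of \cite{Chaiken82}, applied to the doubled digraph $\widehat{D}_w(t)$, together with Lemma \ref{lemma-doubled-laplacian}. We first pass to $D_{\red}$; the discussion preceding \eqref{eq-Laplacian} shows this changes neither the tree polynomial nor the circulation equations, so both counting conventions in \eqref{eq-weighted-trees} agree.

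The Matrix-Tree theorem says that, for the out-Laplacian of a weighted digraph, the principal minor obtained by deleting the root row and column equals the weighted sum of in-arborescences rooted at $u_0$. By Lemma \ref{lemma-doubled-laplacian}, the reduced out-Laplacian of $\widehat{D}_w(t)$ is $\overline{L}_w+t\overline{L}_w^{\,T}$. Its determinant is therefore the sum over in-arborescences $\widehat{T}$ of $\widehat{D}_w(t)$ rooted at $u_0$.

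We then biject such in-arborescences with pairs $(T,\text{orientation toward }u_0)$, where $T$ is a spanning tree of the underlying labeled multigraph. Each arc of $\widehat{T}$ is a copy $e^{\pm}$ of a distinct original arc $e$: two copies of the same $e$ would form a 2-cycle, which is impossible in an arborescence. Forgetting the signs gives a spanning tree $T$, and conversely the orientation toward $u_0$ determines the sign of each copy. The copy $e^+$ occurs exactly when $e$ already points toward $u_0$, contributing $w_e$. The copy $e^-$ occurs exactly when $e$ must be reversed, contributing $tw_e$. Hence the weight is $\prod_{e\in T}w_e\,t^{\kappa_{u_0}(T)}$, which proves \eqref{eq-weighted-trees}.

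For root independence, we use the fact that $M=L_w+tL_w^T$ has zero row and column sums (Lemma \ref{lemma-doubled-laplacian}). For any matrix with zero row and column sums, all cofactors are equal: the adjugate has rank at most one, with kernel vectors $\one$ on both sides, so it is a constant multiple of $\one\one^T$. In particular all principal cofactors coincide, and the determinant is independent of $u_0$.

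Palindromicity then follows by transposition:
\begin{equation*}
    t^{\nu-1}P_{D,w}(t^{-1})=\det(t\overline{L}_w+\overline{L}_w^{\,T})=\det\bigl((\overline{L}_w+t\overline{L}_w^{\,T})^T\bigr)=P_{D,w}(t),
\end{equation*}
since the matrix has size $\nu-1$.

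The main obstacle is the bookkeeping in the bijection. We must check that anti-parallel and parallel arcs, kept as distinct labeled copies, are counted correctly and that no double counting occurs. The 2-cycle observation above handles this. The adjugate argument requires no extra hypothesis beyond zero line sums, so the degenerate cases $\nu=1$ (empty determinant, value $1$) are immediate.
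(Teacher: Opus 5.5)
Your proposal is correct and follows the paper's proof: the Matrix-Tree theorem on the doubled digraph $\widehat{D}_w(t)$, the two-cycle observation for the bijection with rooted spanning trees, the adjugate being a multiple of $\one\one^T$ for root independence, and transposition for palindromicity. The only small difference is that you get equal cofactors from the zero row and column sums alone, working with $t$ formally. The paper instead restricts to $t>0$, uses strong connectivity and positive cofactors to get rank $\nu-1$, and then extends as a polynomial identity; both routes are valid, and yours is slightly more economical.
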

\begin{proof}
    Suppose first that $\nu\geq2$ and $t>0$.
    By Lemma \ref{lemma-doubled-laplacian} and the directed Matrix-Tree theorem, the principal cofactor in \eqref{eq-weighted-det} is the total weight of the in-arborescences of $\widehat{D}_w(t)$ rooted at $u_0$.
    There is a bijection between these in-arborescences and the labeled spanning trees of the original underlying multigraph.
    Indeed, an in-arborescence cannot use both copies of one original arc, since they would form a directed two-cycle. Forgetting the copy signs therefore gives a spanning tree.
    Conversely, orienting a fixed labeled tree $T$ toward $u_0$ selects exactly one copy of each arc in $T$.
    The selected copy contributes $w_e$ when the original arc agrees with this orientation and $t w_e$ otherwise.
    Therefore, the weight is $(\prod_{e\in T} w_e)t^{\kappa_{u_0}(T)}$. This proves \eqref{eq-weighted-trees} for $t>0$, and hence as a polynomial identity.

    For $t>0$, the doubled digraph is strongly connected: every undirected path in the connected support can be followed in either direction.
    Its Laplacian has rank $\nu-1$, since its rooted-tree cofactors are positive, and its left and right kernels are both spanned by $\one$.
    Its adjugate is therefore a scalar multiple of $\one\one^T$.
    All principal cofactors agree for $t>0$, so they agree as polynomials.
    This proves root independence.
    Reduction does not change either the Laplacian or the tree sum, as explained above.
    Finally, we have
    \begin{equation*}
        t^{\nu-1}P_{D,w}(t^{-1})=\det(t\overline{L}_w+\overline{L}_w^{T})=\det(\overline{L}_w+t\overline{L}_w^{T}),
    \end{equation*}
    by invariance of determinant under transpose.
    For one vertex, we use the convention that the determinant of the empty matrix is one.
\end{proof}

\subsection{The flat arrangement for real weights}

We next identify the weighted tree polynomial with a flat-arrangement polynomial.
The argument uses an arc order for which the least arc crossing any cut points away from the shore containing the root.
We first construct this order and then relate cographic semi-activity to tree reversals.

\begin{lemma}\label{lemma-rooted-cut-order}
    Let $D$ be a connected loopless digraph with a positive circulation, and fix a vertex $u_0$.
    There is a total order on its arcs such that, for every nonempty proper subset $R\subset V(D)$ containing $u_0$, the least arc between $R$ and its complement leaves $R$.
\end{lemma}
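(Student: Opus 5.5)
The plan is to build the order greedily from a spanning structure that points away from $u_0$. Since the weights form a positive circulation, every arc of $D$ lies on a directed cycle. Indeed, the positive circulation decomposes as a positive combination of directed cycles, and its support is all of $E(D)$. Each connected component of a digraph in which every arc lies on a directed cycle is strongly connected, so $D$ is strongly connected. Hence there is an out-arborescence $T_{\mathrm{out}}$ rooted at $u_0$, that is, a spanning tree all of whose arcs are directed away from $u_0$.

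We obtain one by breadth-first search along directed arcs from $u_0$. Order the vertices $u_0=v_0,v_1,\dots,v_{\nu-1}$ in the order they are discovered, so that each $v_k$ with $k\geq1$ has a parent arc $e_k$ from some $v_j$ with $j<k$. Define the total order on arcs by listing $e_1<e_2<\dots<e_{\nu-1}$ first, followed by all remaining arcs in any order.

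To verify the cut property, let $R\ni u_0$ be a nonempty proper subset of $V(D)$. Take the least $k$ with $v_k\notin R$; this is possible because $R^c$ is nonempty. Then $e_k$ goes from $v_j$ with $j<k$, so $v_j\in R$, into $v_k\in R^c$. Thus $e_k$ crosses the cut and leaves $R$.

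Any arc $e$ crossing the cut that is smaller than $e_k$ must be a tree arc $e_{k'}$ with $k'<k$. Its head $v_{k'}$ and its tail $v_{j'}$, where $j'<k'$, both lie in $R$ by minimality of $k$, so $e_{k'}$ does not cross the cut, a contradiction. Therefore the least crossing arc is $e_k$, and it leaves $R$.

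The only step requiring care is the strong connectivity, which uses that the circulation has strictly positive weights. If an arc $u\to v$ lay on no directed cycle, let $S$ be the set of vertices reachable from $v$. Then $u\notin S$, and no arc leaves $S$. Summing the balance equations \eqref{eq-balance-intro} over $S$ shows that the total weight leaving $S$ equals the total weight entering $S$. The weight entering is at least $w_{u\to v}>0$, while the weight leaving is zero, which contradicts positivity. This gives strong connectivity and completes the plan. Parallel arcs and anti-parallel arcs cause no issue, since the argument only uses the tree arcs and the positions of their endpoints.
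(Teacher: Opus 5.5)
Your proof is correct and follows essentially the same route as the paper. A balance-summation argument gives reachability from $u_0$; a BFS out-tree rooted at $u_0$ is then ordered parent-before-child, with all non-tree arcs placed last; and the least arc crossing any cut is a tree arc leaving $R$. The differences are minor: you prove full strong connectivity (every arc lies on a directed cycle) where the paper only sums the balance equations over the set reachable from $u_0$, and you identify the least crossing arc directly as the parent arc of the first discovered vertex outside $R$, rather than arguing by contradiction along the tree path from $u_0$.
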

\begin{proof}
    Let $U$ be the set of vertices reachable from $u_0$ by directed paths. Note that no arc leaves $U$.
    Summing the circulation equations over $U$ shows that the total weight entering $U$ is also zero.
    Since all arc weights are positive, no arc enters $U$, and connectedness implies $U=V(D)$.
    For each vertex other than $u_0$, choose the last arc of a shortest directed path from $u_0$ to that vertex.
    These chosen arcs form a spanning tree $Q$ oriented away from $u_0$: the distance from $u_0$ increases by one along each chosen arc, and following the chosen predecessors leads back to $u_0$.

    We order the arcs of $Q$ so that every arc precedes all its descendants and place all other arcs after those of $Q$.
    Every nontrivial cut meets $Q$, so its least arc $e$ belongs to $Q$.
    If $e$ entered $R$, its tail would lie outside $R$, and the directed path in $Q$ from $u_0$ to that tail would contain an earlier arc leaving $R$.
    This contradicts the choice of $e$.
    Therefore, the least crossing arc leaves $R$.
\end{proof}

For Eulerian digraphs, \cite[Lemma 5.2]{KMP25} obtains this cut property from an Euler tour.
Lemma \ref{lemma-rooted-cut-order} gives it directly for any support carrying a positive circulation.
The following activity calculation is the fundamental-cut argument of \cite[Lemma 5.1]{KMP25}, written in the circuit-vector convention of \eqref{eq-fundamental}.

\begin{lemma}\label{lemma-tree-activity}
    Let $C$ be the cographic matrix \eqref{eq-cographic-matrix} of a loopless connected digraph $D$ with $m$ arcs. Fix a root $u_0$ and an order $e_1<\cdots<e_m$ with the cut property of Lemma \ref{lemma-rooted-cut-order}. Let $\rho_{e_k}=2^{m-k}$. Then $\rho$ is circuit-generic for $C$, and every spanning tree $T$, with complementary cographic basis $S=E(D)\setminus T$, satisfies
    \begin{equation}
        \label{eq-weighted-activity}
        \ext_\rho(S)=\kappa_{u_0}(T).
    \end{equation}
\end{lemma}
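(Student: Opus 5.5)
The plan is to identify the circuit vectors of the cographic arrangement $C$ with signed minimal cuts. Then the sign of $\langle\rho,z\rangle$ is controlled by the single dominant power of two, namely the term of the least arc in the cut, and the cut property of Lemma \ref{lemma-rooted-cut-order} turns this sign into the reversal statistic.

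First I describe $\ker C$. The rows of $C$ form a basis of $\ker\partial$, so $\ker C=(\ker\partial)^\perp=\op{rowspan}(\partial)$. This is the cut space: the image of vertex functions $g$ under $g\mapsto(g(\op{head}e)-g(\op{tail}e))_e$. Its elements of minimal support are, up to a nonzero scalar, the signed indicator vectors of minimal cuts $(R,R^c)$. Such a vector has entry $+1$ on arcs leaving $R$, entry $-1$ on arcs entering $R$, and $0$ elsewhere. This is the standard cographic circuit description already recalled in the paper.

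Circuit-genericity then follows. For such a $z$, $\langle\rho,z\rangle$ is a signed sum of distinct powers $2^{m-k}$ over the arcs of the cut, each with sign $\pm1$. The term of the least arc $e_{k_0}$ exceeds the sum of all smaller powers. Hence $\langle\rho,z\rangle\neq0$, and its sign equals the sign of $z$ at the least arc of the cut.

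Next I fix a spanning tree $T$ with $S=E(D)\setminus T$ and take $e\in T$, so that $e\notin S$. The unique circuit in $S\sqcup\{e\}$ is the fundamental cut of $e$: deleting $e$ from $T$ splits the vertices into the shore $R\ni u_0$ and its complement. The normalized vector $c^{S,e}$ of \eqref{eq-fundamental} is the signed cut indicator rescaled so that its $e$-entry is $+1$. Concretely, $c^{S,e}_f=+1$ if $f$ crosses the cut in the same direction as $e$, and $c^{S,e}_f=-1$ if it crosses in the opposite direction.

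By the previous step, $\langle\rho,c^{S,e}\rangle>0$ exactly when $e$ crosses in the same direction as the least crossing arc. By Lemma \ref{lemma-rooted-cut-order}, that least arc leaves $R$. So $e\in\Ext_\rho(S)$ if and only if $e$ leaves the root shore $R$. When $T$ is oriented toward $u_0$, the arc $e$ must point from $R^c$ into $R$. Thus $e$ leaves $R$ exactly when its given orientation disagrees, that is, when $e$ is counted by $\kappa_{u_0}(T)$. Summing over $e\in T$ gives \eqref{eq-weighted-activity}.

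The main obstacle is sign bookkeeping. I must confirm that the normalized fundamental circuit vector in \eqref{eq-fundamental} is indeed the cut indicator scaled to have $+1$ at $e$, with no hidden sign from the choice of $C$. I would check this directly: $c^{S,e}$ lies in $\ker C=\op{rowspan}(\partial)$, and its support is contained in $S\sqcup\{e\}$. By uniqueness of the circuit in $S\sqcup\{e\}$, $c^{S,e}$ is a scalar multiple of the signed cut vector, and the scalar is fixed by $c^{S,e}_e=1$. Hence the sign choices made for the rows of $C$ never enter.
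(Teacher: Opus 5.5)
Your proposal is correct and is essentially the paper's proof. You use the same three ingredients: circuits of $C$ are signed minimal cuts, the dominant term $2^{m-k}$ gives the pairing the sign of the least crossing arc, and the fundamental cut of $e\in T$ combined with the cut property identifies $\Ext_\rho(S)$ with the tree arcs leaving the root shore $R$. The only difference is presentational: the paper writes the normalized fundamental circuit explicitly as $q/q_j$ with $q=\partial^T\one_R$, while you describe it through crossing directions relative to $e$.
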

\begin{proof}
    Every circuit vector of $C$ is a nonzero scalar multiple of a signed minimal-cut vector, whose nonzero entries are $1$ and $-1$.
    The inequality $2^{m-k}>\sum_{\ell>k}2^{m-\ell}$ shows that the pairing with $\rho$ has the sign of the first nonzero coordinate of any such circuit vector.
    In particular, the pairing is nonzero, so $\rho$ is circuit-generic.

    Fix $j\in T$, and let $R$ be the component of $T\backslash\{j\}$ containing $u_0$.
    The cut vector $q=\partial^T\one_R$ lies in $\ker C$ because $C\partial^T=0$.
    Its support is contained in $S\cup\{j\}$, since $j$ is the only tree arc crossing the cut and $q_j\in\{1,-1\}$.
    As $S$ is a basis, the normalized fundamental circuit is therefore
    \begin{equation*}
        z^{S,j}=q/q_j.
    \end{equation*}
    Let $e_*$ be the least crossing arc.
    It leaves $R$, so $q_{e_*}=-1$ by the incidence convention.
    Consequently $\langle\rho,z^{S,j}\rangle>0$ if and only if $q_j=-1$, which holds exactly when $j$ leaves $R$.
    Such an arc must be reversed to point toward $u_0$. Counting these arcs $j\in T$ proves \eqref{eq-weighted-activity}.
\end{proof}

We apply these lemmas to the reduced weighted digraph $D_{\red}$.
We include the numerical weights by rescaling the columns by positive factors and computing the resulting determinant weights.

\begin{proposition}\label{prop-weighted-bridge}
    Assume $\nu\geq2$ and let $C=[c_e]_{e\in E_{\red}}$ be the totally unimodular cographic matrix \eqref{eq-cographic-matrix} of $D_{\red}$.
    Let
    \begin{equation}
        \label{eq-weighted-arrangement}
        D_w=\op{diag}(w_e:e\in E_{\red}),\quad\widetilde{C}=CD_w^{-1}.
    \end{equation}
    Then $\widetilde{C}$ is a flat arrangement of rank $\beta_{\red}$ and
    \begin{equation}
        \label{eq-weighted-bridge}
        P_{D,w}(t)=\left(\prod_{e\in E_{\red}}w_e\right)f_{\widetilde{C}}(t).
    \end{equation}
\end{proposition}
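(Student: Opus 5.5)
Three things need checking: that $\widetilde{C}$ is a flat arrangement of rank $\beta_{\red}$, that its basis weights are the products of inverse arc weights over the cobasis, and that its semi-activity statistic, for a suitable $\rho$, matches the tree reversal count.

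\emph{Flatness and rank.} The matrix $\widetilde{C}=CD_w^{-1}$ has the same row space dimension as $C$, namely $\beta_{\red}$, and it is surjective onto $\R^{\beta_{\red}}$. By Lemma \ref{lemma-doubled-laplacian}, circulation balance makes the weight vector $w=(w_e)$ lie in $\ker\partial=\op{rowspan}(C)$. Indeed, $\partial w$ at vertex $u$ is the inflow minus the outflow, which is zero. So there is a functional $\psi$ with $\psi(c_e)=w_e$ for every arc $e$. Then $\psi(c_e/w_e)=1$, and the columns of $\widetilde{C}$ lie in an affine hyperplane, as a flat arrangement requires.

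\emph{Weights.} Use the coordinate density. For a cographic basis $S=E_{\red}\setminus T$, we get
\[
w_{\widetilde{C}}(S)=\left|\det C_S\right|\prod_{e\in S}w_e^{-1}=\prod_{e\in S}w_e^{-1},
\]
since $C$ is totally unimodular and $C_S$ is nonsingular. Multiplying by $\prod_{e\in E_{\red}}w_e$ leaves exactly $\prod_{e\in T}w_e$. This is the tree weight in \eqref{eq-weighted-trees}, taken after reduction by Lemma \ref{lemma-matrix-tree}.

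\emph{Activities.} Column rescaling by positive scalars sends $\ker C$ to $\ker\widetilde{C}=D_w\ker C$. Circuit vectors therefore correspond, with supports and signs preserved. The normalized fundamental circuit of $\widetilde{C}$ for $(S,j)$ is $D_w z^{S,j}/w_j$, where $z^{S,j}$ is the one for $C$. Fix a root $u_0$ and an arc order from Lemma \ref{lemma-rooted-cut-order}; this lemma uses only positivity of the circulation on the reduced support. Then set $\tilde\rho=D_w^{-1}\rho$, with $\rho_{e_k}=2^{m_{\red}-k}$ as in Lemma \ref{lemma-tree-activity}. This gives
\[
\langle\tilde\rho,D_wz^{S,j}/w_j\rangle=\langle\rho,z^{S,j}\rangle/w_j .
\]
So $\tilde\rho$ is circuit-generic for $\widetilde{C}$, and $\Ext_{\tilde\rho}(S)$ agrees with $\Ext_\rho(S)$ computed for $C$. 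By Lemma \ref{lemma-tree-activity}, $\ext_{\tilde\rho}(S)=\kappa_{u_0}(T)$.

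\emph{Conclusion.} Spanning trees $T$ correspond bijectively to bases $S$. Summing weight times $t^{\text{activity}}$ over them gives \eqref{eq-weighted-bridge} with $\rho$-independence of $f$. Root independence of the left side follows from Lemma \ref{lemma-matrix-tree}.

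The main obstacle is bookkeeping rather than ideas. One must check that the circuit correspondence respects the normalization $c_j=1$; this introduces the positive factor $1/w_j$ but leaves the sign intact. One must also check that the circulation vector really lies in the row space of the chosen representation $C$; this holds because $\op{rowspan}(C)=\ker\partial$ for the reduced digraph.
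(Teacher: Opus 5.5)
Your proposal is correct and follows the paper's proof essentially step for step. The circulation vector lying in $\op{rowspan}(C)=\ker\partial$ gives flatness, and total unimodularity gives $\bigl(\prod_{e\in E_{\red}}w_e\bigr)|\det\widetilde C_S|=\prod_{e\in T}w_e$. The rescaled circuit $D_wz^{S,j}/w_j$ together with $\tilde\rho=D_w^{-1}\rho$ preserves every activity sign, so Lemma \ref{lemma-tree-activity} applies. The only blemish is citing Lemma \ref{lemma-doubled-laplacian} for $w\in\ker\partial$; that lemma is not needed, because your direct inflow-minus-outflow computation already proves it.
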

\begin{proof}
    The vector $w=(w_e)$ lies in $\ker\partial=\op{rowspan}(C)$, so there is a functional $\varphi_w$ on the column space of $C$ with $\varphi_w(c_e)=w_e$.
    After scaling, $\varphi_w(c_e/w_e)=1$, so $\widetilde{C}$ is a flat arrangement.
    Its rank is $\beta_{\red}$ since the scaling matrix is invertible.

    Choose an arc order by Lemma \ref{lemma-rooted-cut-order} and the circuit-generic vector $\rho$ of Lemma \ref{lemma-tree-activity}.
    Fix a spanning tree $T$, let $S=E_{\red}\setminus T$ and $j\in T$.
    Let $z^{S,j}$ be the normalized fundamental circuit vector for $C$.
    For $\widetilde{C}=CD_w^{-1}$, the corresponding normalized vector is
    \begin{equation}
        \label{eq-scaled-circuit}
        \widetilde{z}^{S,j}=\frac{D_wz^{S,j}}{w_j}.
    \end{equation}
    Taking $\widetilde{\rho}=D_w^{-1}\rho$ then gives
    \begin{equation}
        \label{eq-scaled-activity}
        \langle\widetilde{\rho},\widetilde{z}^{S,j}\rangle=\frac{\langle\rho,z^{S,j}\rangle}{w_j}.
    \end{equation}
    The positive diagonal scaling preserves minimal dependence supports, so it carries all circuits to circuits and makes $\widetilde{\rho}$ circuit-generic for $\widetilde{C}$.
    Since $w_j>0$, every activity sign is preserved, hence \eqref{eq-weighted-activity} holds for the scaled arrangement as well.

    Finally, the total unimodularity gives $|\det C_S|=1$ for every cographic basis.
    With the standard coordinate density, we have
    \begin{equation}
        \label{eq-scaled-determinant}
        |\det\widetilde{C}_S|=\prod_{e\in S}w_e^{-1},\quad\left(\prod_{e\in E_{\red}}w_e\right)|\det\widetilde{C}_S|=\prod_{e\in T}w_e.
    \end{equation}
    Summing \eqref{eq-polynomial} over complements of spanning trees, together with \eqref{eq-weighted-activity} and \eqref{eq-scaled-determinant}, then proves \eqref{eq-weighted-bridge}.
\end{proof}

We can now prove the main result of this section.
The following theorem strengthens Theorem \ref{thm-weighted-intro} by including the equality cases for power concavity.

\begin{theorem}
    \label{thm-weighted}
    For a connected directed multigraph with positive real circulation weights, the polynomial
    \begin{equation*}
        P_{D,w}(t)=\sum_{i=0}^{\nu-1}c_i(D,w)t^i
    \end{equation*}
    is independent of the root and has positive, palindromic coefficients.
    For $\beta_{\red}\geq1$ and every $1\leq i\leq\nu-2$,
    \begin{equation}
        \label{eq-weighted-quadratic}
        \beta_{\red}c_i(D,w)^2\geq c_i(D,w)(c_{i-1}(D,w)+c_{i+1}(D,w))+(\beta_{\red}-2)c_{i-1}(D,w)c_{i+1}(D,w).
    \end{equation}
    The coefficients are log-concave, with equality exactly at a locally constant triple, and hence are trapezoidal.
    For $\beta_{\red}\geq2$ and every internal index $1\leq i\leq\nu-2$,
    \begin{equation}
        \label{eq-weighted-power}
        2c_i(D,w)^{1/(\beta_{\red}-1)}\geq c_{i-1}(D,w)^{1/(\beta_{\red}-1)}+c_{i+1}(D,w)^{1/(\beta_{\red}-1)}.
    \end{equation}
    For $\beta_{\red}\geq3$, equality in \eqref{eq-weighted-power} occurs exactly at a locally constant triple.
    For $\beta_{\red}=1$, the coefficient sequence is constant. For $\beta_{\red}=2$, \eqref{eq-weighted-power} is ordinary additive concavity.
    For $\beta_{\red}=0$, the polynomial is $1$.
\end{theorem}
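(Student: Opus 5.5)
The plan is to reduce everything to Theorem \ref{thm-main} and Corollary \ref{cor-power} through the bridge in Proposition \ref{prop-weighted-bridge}. We first replace $D$ by $D_{\red}$. By the discussion before Lemma \ref{lemma-matrix-tree}, this reduction preserves the tree polynomial, the circulation equations, $\nu$ and connectedness, and it removes loops. Root independence and the identity $t^{\nu-1}P_{D,w}(t^{-1})=P_{D,w}(t)$ are already contained in Lemma \ref{lemma-matrix-tree}, which settles root independence and palindromicity once positivity is known.

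We then separate by $\nu$. If $\nu=1$, then after loop deletion there are no arcs, so $\beta_{\red}=0$ and $P_{D,w}=1$ by convention. If $\nu\geq2$, every vertex carries a positive circulation, so every vertex has an outgoing arc in $D_{\red}$. Hence $m_{\red}\geq\nu$ and $\beta_{\red}\geq1$, and we show that $\beta_{\red}=0$ occurs only in the trivial case. This uses the same argument as in Corollary \ref{cor-Eulerian}: a vertex with positive in-weight has positive out-weight, and connectedness with $\nu\geq2$ gives every vertex some incident arc.

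For $\nu\geq2$, Proposition \ref{prop-weighted-bridge} gives $P_{D,w}(t)=\bigl(\prod_{e\in E_{\red}}w_e\bigr)f_{\widetilde C}(t)$. Here $\widetilde C$ is a flat arrangement of rank $d=\beta_{\red}$ on $N=m_{\red}$ columns, so $r=N-d=\nu-1$. The positive constant factor does not affect positivity, palindromicity, the homogeneous quadratic inequality, log-concavity and its equality case, trapezoidality, or power concavity, since $1/(\beta_{\red}-1)$-st powers only rescale uniformly. Theorem \ref{thm-main} with $d=\beta_{\red}$ then gives positivity, palindromicity, \eqref{eq-weighted-quadratic} as \eqref{eq-quadratic}, the equality characterization \eqref{eq-lc-equality}, and the trapezoidal shape. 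For $d=1$ it gives constant coefficients. Corollary \ref{cor-power} with $d=\beta_{\red}\geq2$ gives \eqref{eq-weighted-power}, with the equality criterion for $\beta_{\red}\geq3$. For $\beta_{\red}=2$, the exponent is $1$, so \eqref{eq-weighted-power} is ordinary concavity.

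The main point to check is the index bookkeeping. We must confirm that the coefficient range $0\leq i\leq\nu-1$ of $P_{D,w}$ matches $0\leq i\leq r$ of $f_{\widetilde C}$, which is true because $r=m_{\red}-\beta_{\red}=\nu-1$. We also need $\widetilde C$ to have rank exactly $\beta_{\red}$, as stated in Proposition \ref{prop-weighted-bridge}. Beyond that there is no real obstacle, since all the substantive work is in the earlier results.
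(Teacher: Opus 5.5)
Your proposal is correct and follows essentially the same route as the paper's proof. You pass to $D_{\red}$ and use balance plus connectedness to get $\beta_{\red}\geq1$ when $\nu\geq2$. You then apply Proposition \ref{prop-weighted-bridge} with corank $\nu-1$, transfer Theorem \ref{thm-main} and Corollary \ref{cor-power} through the positive scalar factor, and take root independence from Lemma \ref{lemma-matrix-tree}.
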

\begin{proof}
    If $\nu\geq2$, every vertex has a positive outgoing weight: otherwise balance will give no incident arcs, which contradicts connectedness.
    Thus the reduced support has at least $\nu$ arcs and $\beta_{\red}\geq1$.
    Proposition \ref{prop-weighted-bridge} identifies $P_{D,w}$, up to a positive scalar, with the polynomial of a flat arrangement of rank $\beta_{\red}$ and corank
    \begin{equation*}
        |E_{\red}|-\beta_{\red}=\nu-1.
    \end{equation*}
    Theorem \ref{thm-main} with $d=\beta_{\red}$ gives the coefficient inequality, positivity, symmetry, and the ordinary log-concavity equality statement.
    Corollary \ref{cor-power} gives the power-concavity statements when $\beta_{\red}\geq2$.
    All these conclusions are unchanged by common positive rescaling of the coefficients.
    For $\nu=1$, loop deletion gives the edgeless graph and the polynomial $1$.
    Root independence and the determinant formula follow from Lemma \ref{lemma-matrix-tree}.
\end{proof}

\begin{proof}[Proof of Theorem \ref{thm-weighted-intro}]
    Theorem \ref{thm-weighted} gives root independence, positivity, palindromicity, and log-concavity for $P_{D,w}$, together with the stated characterization of equality in log-concavity. If $D$ is Eulerian, the weights $w_e=1$ satisfy the circulation equations \eqref{eq-balance-intro}, since the in-degree and out-degree agree at every vertex. For these weights, $P_{D,w}=P_D$, so the same conclusions hold for the Murasugi--Stoimenow polynomial.
\end{proof}

\begin{example}\label{example-nonEulerian-support}
    The unweighted support of a positive circulation need not be Eulerian.
    On vertices $1,2,3$, take arcs $1\to 2$, $2\to 1$, $2\to 3$, and $3\to 1$, with respective weights $a+b,a,b,b$, where $a,b>0$ are arbitrary real numbers.
    The weighted degrees balance at every vertex, but the unweighted in-degree and out-degree differ at vertices $1$ and $2$.
    Here $\beta_{\red}=2$, and deleting the row and column of vertex $1$ gives
    \begin{equation*}
        \overline{L}_w=\begin{pmatrix}a+b&-b \\ 0&b\end{pmatrix},\qquad P_{D,w}(t)=b(a+b)+b(2a+b)t+b(a+b)t^2.
    \end{equation*}
    Theorem \ref{thm-weighted} gives additive concavity of these coefficients.
    Therefore, the weighted theorem also applies to supports that are not Eulerian, provided the weights satisfy circulation balance.
\end{example}

Merging parallel arcs with the same orientation can reduce the rank parameter without changing the polynomial.
Therefore, the weighted theorem can give stronger estimates even for an originally unweighted multigraph.
The ratio and coefficient-growth bounds of Proposition \ref{prop-ratios} transfer through the same weighted identity.

\section{Refined inequalities for special alternating links}\label{section-links}

In this section, we apply the Eulerian-digraph inequalities of Section \ref{section-graphs} to the Alexander polynomials of special alternating links.
We obtain a quadratic coefficient inequality and power concavity with an exponent determined by the number of Seifert circles, together with their equality cases.
The passage from link diagrams to Eulerian digraphs uses the checkerboard construction and the Alexander-polynomial identity of Murasugi and Stoimenow.

We first recall some terminology following \cite[Sections 2.1, 4.5, and 5.1]{Murasugi96} and \cite[Section 4.2]{MS03}.
An oriented link diagram is a planar projection of an oriented link with only transverse double points, together with overcrossing and undercrossing information at each double point.
It is connected if its underlying projection is connected, and alternating if overcrossings and undercrossings alternate along each component.
A crossing is nugatory if a simple closed curve meets the projection transversely at that crossing and nowhere else. A diagram is reduced if it has no nugatory crossings.
The oriented smoothing replaces a crossing by two disjoint arcs compatible with the orientations. Smoothing every crossing produces the Seifert circles, and Seifert's construction fills these circles by disks and restores the crossings by half-twisted bands.

For special alternating diagrams we use the plane bipartite graph construction of \cite[Section 5, Proposition 4]{MS03}. See also \cite[Section 1.2]{HMV25} for the orientation convention.
Here a plane graph means a graph with a fixed embedding in the plane, and parallel edges are kept.
Given a connected plane bipartite graph $G$ with at least one edge, form its medial projection by placing a crossing at each edge midpoint and joining successive midpoints around the vertices of $G$. The regions corresponding to vertices of one bipartition class have counterclockwise oriented boundaries, and those corresponding to the other class have clockwise oriented boundaries.
Choose the overcrossings and undercrossings such that every crossing is positive. This gives the positive special alternating diagram $L_G$.
Its mirror is the negative special alternating diagram.
Equivalently, the Seifert surface consists of one disk at each vertex of $G$ and one crossing band along each edge.
The crossingless unknot is included separately, corresponding to a single vertex and no edges. A special alternating link here is an oriented link admitting one of these diagrams.

Let $\mathcal{D}$ be a connected reduced special alternating diagram of $L$, and write $c$ for its number of crossings and $s$ for its number of Seifert circles.
We use the one-variable Alexander polynomial with integral exponents, defined up to multiplication by a unit $\pm t^k$. See \cite[Section 4.1]{MS03}.
We write $X\eqdot Y$ for equality up to such a unit and choose the normalization
\begin{equation}
    \label{eq-link-normalization}
    F_L(t)\eqdot\Delta_L(-t),
\end{equation}
with lowest exponent zero and positive coefficients.
The proof below shows that this normalization exists and determines its degree.

\begin{theorem}\label{thm-links}
    Let $L$ admit a connected reduced special alternating diagram with $c$ crossings and $s$ Seifert circles. Its normalized Alexander polynomial is
    \begin{equation*}
        F_L(t)=\sum_{i=0}^{c-s+1}\gamma_i t^i,
    \end{equation*}
    with every $\gamma_i>0$ and $\gamma_i=\gamma_{c-s+1-i}$.
    At every internal index,
    \begin{equation}
        \label{eq-link-quadratic}
        (s-1)\gamma_i^2\geq\gamma_i(\gamma_{i-1}+\gamma_{i+1})+(s-3)\gamma_{i-1}\gamma_{i+1},\qquad 1\leq i\leq c-s.
    \end{equation}
    The sequence is log-concave, with equality exactly at a locally constant triple, and is trapezoidal.
    For $s\geq3$ and every internal index,
    \begin{equation}
        \label{eq-link-power}
        2\gamma_i^{1/(s-2)}\geq\gamma_{i-1}^{1/(s-2)}+\gamma_{i+1}^{1/(s-2)}.
    \end{equation}
    For $s\geq4$, the equality in \eqref{eq-link-power} holds exactly at a locally constant triple.
    For $s=1$, the diagram is the crossingless unknot and $F_L=1$.
    For $s=2$,
    \begin{equation*}
        F_L(t)=[c]_t=1+t+\cdots+t^{c-1}.
    \end{equation*}
\end{theorem}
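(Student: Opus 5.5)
The plan is to reduce Theorem \ref{thm-links} to Corollary \ref{cor-Eulerian} through the Murasugi--Stoimenow identity between the special alternating Alexander polynomial and the tree polynomial of an Eulerian digraph. The rest then follows by bookkeeping the parameters.

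\textbf{Step 1: dispose of the crossingless unknot.} The case $s=1$ is the crossingless unknot, with $\Delta=1$ and $F_L=1$. Otherwise $c\geq1$, and up to mirror image (which changes $\Delta_L$ only by $t\mapsto t^{-1}$, harmless after normalization) the diagram is $L_G$ for a connected plane bipartite graph $G$ with $s$ vertices (one per Seifert circle) and $c$ edges (one per crossing).

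\textbf{Step 2: pass to the Eulerian digraph.} Take the planar dual $G^*$, oriented so that each dual edge crosses its primal edge with, say, the black class on its left. Bipartiteness of $G$ means each face of $G^*$ corresponds to a vertex of $G$, and its boundary edges alternate consistently. Equivalently, around each vertex of $G^*$ the orientations alternate in/out, so $G^*$ is Eulerian. It is connected, has $c$ arcs and $c-s+2$ vertices by Euler's formula, and therefore
\[
\beta(G^*)=c-(c-s+2)+1=s-1 .
\]
Reducedness of the diagram rules out loops in $G^*$, since a loop would correspond to a bridge of $G$, i.e.\ a nugatory crossing. Hence loop deletion does not change $\beta$.

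\textbf{Step 3: identify the polynomials.} By \cite[Section 5, Proposition 4]{MS03} (see also \cite[Section 1.2]{HMV25}),
\[
\Delta_L(-t)\eqdot P_{G^*}(t),
\]
up to a sign convention that the normalization \eqref{eq-link-normalization} absorbs. This is the step I expect to need the most care. Signs, mirror conventions and which graph (primal or dual) carries the orientation must be matched exactly, because the rank parameter depends on using the dual digraph. The Seifert-matrix route is the fallback: Kauffman-state or matrix-tree expansion of $\det(V-tV^T)$ for the Seifert matrix of the checkerboard surface of $L_G$ gives $\overline L+t\overline L^T$ for the Laplacian of $G^*$. Lemma \ref{lemma-matrix-tree} then turns this into the tree sum.

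\textbf{Step 4: read off the conclusions.} Corollary \ref{cor-Eulerian} gives positive, palindromic coefficients. Since $\nu=c-s+2$, the degree is $\nu-1=c-s+1$, which fixes the normalization $F_L=P_{G^*}$ and shows it exists. With $\beta=s-1$, the corollary's quadratic inequality becomes \eqref{eq-link-quadratic}. It also gives log-concavity with the equality characterization, and trapezoidality via Theorem \ref{thm-main}. For $\beta\geq2$, i.e.\ $s\geq3$, the $1/(s-2)$-power concavity holds, with strict equality rigidity when $\beta\geq3$, i.e.\ $s\geq4$. For $s=2$ we have $\beta=1$, so the coefficients are all equal. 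The lowest one is $\gamma_0=1$ (the single in-arborescence with no reversals; equivalently, $G^*$ is a single directed cycle of length $c$ with $P=[c]_t$), which gives $F_L=[c]_t$.
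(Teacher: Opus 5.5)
Your proposal is correct and follows essentially the same route as the paper: reduce to the positive diagram $L_G$ by mirroring, pass to the loopless Eulerian plane dual with $c-s+2$ vertices and $\beta=s-1$, invoke $\Delta_L(-t)\eqdot P_D(t)$, read everything off Corollary \ref{cor-Eulerian}, and settle $s=2$ through the directed $c$-cycle. One small correction: the polynomial identity is \cite[Theorem 2]{MS03}, while Section 5, Proposition 4 there is the bipartite-graph construction, so citing the theorem directly makes the Seifert-matrix fallback unnecessary.
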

\begin{proof}
    Taking the mirror preserves $c$ and $s$ and changes the one-variable Alexander polynomial only by a unit, by the Seifert-matrix description in \cite[Proposition 5.4.7 and Section 6.1]{Murasugi96}, thus the normalized coefficient sequence is unchanged.
    We may assume that $\mathcal{D}=L_G$ is positive special alternating.

    In the disk and band description, each edge of $G$ contributes exactly one crossing. At that crossing, the oriented smoothing reconnects the boundary arcs belonging to each of the two incident vertex disks.
    After every crossing is smoothed, these arcs form exactly the boundaries of the individual disks, thus the Seifert circles correspond bijectively to the vertices of $G$, and the crossings correspond bijectively to its edges:
    \begin{equation}
        \label{eq-Seifert-counts}
        |V(G)|=s,\qquad |E(G)|=c.
    \end{equation}
    This is the typical graph whose vertices correspond to Seifert-circle regions in \cite[Section 5, Proposition 4]{MS03}.
    If $s=1$, bipartiteness forbids edges, so the diagram is the crossingless unknot and $F_L=1$. Now suppose $s\geq2$.

    The graph $G$ is loopless since it is bipartite, and it is bridgeless since $\mathcal{D}$ is reduced.
    To see the latter implication, note that a bridge has the same face on both sides. Therefore, the two corresponding opposite regions at the medial crossing belong to one region.
    An arc in that region joining these opposite sectors, closed through the crossing, gives a simple closed curve meeting the projection only at the crossing.
    This would make the crossing nugatory. 
    See \cite[Section 4.2, Remark 4]{MS03}.

    Let $D$ be the plane dual of $G$, directing each dual edge so that the endpoint of the crossed edge in one fixed bipartition class lies to its left and the other endpoint lies to its right.
    Along each face boundary of $G$, successive vertices belong to opposite bipartition classes.
    The incident dual edges therefore alternate between incoming and outgoing around each vertex of $D$.
    In particular, $D$ is a connected Eulerian digraph. An embedding with this alternation property is called an alternating dimap \cite[Section 1.1]{HMV25}.
    A dual edge is a loop precisely when its primal edge is a bridge, so $D$ is loopless.
    The edges of $D$ correspond bijectively to the edges of $G$, and the vertices of $D$ correspond to the faces of $G$.
    Therefore, Euler's formula and \eqref{eq-Seifert-counts} give
    \begin{equation*}
        |V(D)|=c-s+2,\qquad|E(D)|=c,
    \end{equation*}
    and hence
    \begin{equation}\label{eq-link-rank}
        \beta(D)=|E(D)|-|V(D)|+1=s-1,\qquad|V(D)|-1=c-s+1.
    \end{equation}

    The Alexander-polynomial identity of \cite[Theorem 2]{MS03} is
    \begin{equation}
        \label{eq-link-bridge}
        \Delta_L(-t)\eqdot P_D(t).
    \end{equation}
    Its spanning-tree exponent is the number of edges that must be reversed to point toward the root, exactly as in \eqref{eq-unweighted-poly}. See also \cite[Theorem 1.4]{HMV25}.
    Corollary \ref{cor-Eulerian} shows that $P_D$ has positive coefficients in every degree from zero to $|V(D)|-1$.
    The nonzero constant coefficient determines the monomial factor in \eqref{eq-link-bridge}, and positivity determines its sign.
    Therefore, $F_L=P_D$, and \eqref{eq-link-rank} gives the degree and identifies the coefficient sequences.
    Applying Corollary \ref{cor-Eulerian} with $\beta(D)=s-1$ proves \eqref{eq-link-quadratic}, palindromicity, and the log-concavity equality statement.
    Positivity, palindromicity, and that equality statement then imply trapezoidality.
    For $s\geq3$, the same corollary gives \eqref{eq-link-power} with exponent $1/(\beta(D)-1)=1/(s-2)$. When $s\geq4$, its equality criterion applies because $\beta(D)\geq3$.

    If $s=2$, then $G$ consists of two vertices joined by $c$ parallel edges, with $c\geq 2$ by reducedness.
    Its dual is a coherently directed $c$-cycle.
    Write its vertices as $v_0,\dots,v_{c-1}$ in cyclic order and root it at $v_0$.
    For $0\leq j\leq c-1$, deleting the edge $v_j\to v_{j+1}$ with indices read modulo $c$ gives a spanning tree in which exactly the $j$ edges on the path from $v_0$ to $v_j$ must be reversed.
    These are all the spanning trees, so $P_D(t)=1+t+\dots+t^{c-1}$.
\end{proof}

\begin{proof}[Proof of Theorem \ref{thm-links-intro}]
    Theorem \ref{thm-links} applies to the chosen diagram of $L$ with the same normalized Alexander polynomial $F_L$ and coefficients $\gamma_i$.
    It gives positivity and palindromicity, the inequalities \eqref{eq-link-quadratic} and \eqref{eq-link-power}, and the equality criteria for log-concavity and power concavity.
\end{proof}

Theorem \ref{thm-links} refines the ordinary log-concavity theorem of \cite[Theorem 1.2]{HMV24} by giving the quadratic inequality depending on $s$, the power-concavity exponent, and the rigidity of equality.
For $s=3$, \eqref{eq-link-power} is the additive concavity inequality $2\gamma_i\geq\gamma_{i-1}+\gamma_{i+1}$.
For $s\geq4$, the exponent $1/(s-2)$ remains positive and depends explicitly on the Seifert-circle count of the chosen diagram.

\printbibliography

\end{document}